\documentclass[10pt]{amsart}

\usepackage{amsfonts}
\usepackage{amsmath}
\usepackage{amssymb}
\usepackage{amsthm}
\usepackage{enumerate}
\usepackage{mdwlist}
\usepackage{mathrsfs}
\usepackage{graphicx}
\usepackage{latexsym}
\usepackage{stmaryrd}
\usepackage{verbatim}
\usepackage{xcolor}
\usepackage{comment}
\usepackage{url}
\usepackage[foot]{amsaddr}

\newtheorem{theorem}{Theorem}[section]
\newtheorem{proposition}[theorem]{Proposition}
\newtheorem{lemma}[theorem]{Lemma}
\newtheorem{corollary}[theorem]{Corollary}

\theoremstyle{remark}
\newtheorem{definition}[theorem]{Definition}
\newtheorem{remark}[theorem]{Remark}
\newtheorem{example}[theorem]{Example}


\newcommand{\NN}{\mathbb{N}}
\newcommand{\RR}{\mathbb{R}}
\newcommand{\PP}{\mathbb{P}}
\newcommand{\EE}{\mathbb{E}}
\newcommand{\fluc}[3]{J_{{#1},{#2}}\{{#3}\}}
\newcommand{\upcr}[3]{U_{{#1},{#2}}\{{#3}\}}
\newcommand{\dcr}[3]{D_{{#1},{#2}}\{{#3}\}}
\newcommand{\dcrinf}[2]{D_{{#1}}\{{#2}\}}
\newcommand{\crs}[3]{C_{{#1},{#2}}\{{#3}\}}
\newcommand{\flucinf}[2]{J_{{#1}}\{{#2}\}}
\newcommand{\upcrinf}[2]{U_{{#1}}\{{#2}\}}
\newcommand{\crsinf}[2]{C_{{#1}}\{{#2}\}}

\newcommand{\norm}[1]{\lVert {#1} \rVert}

\newcommand{\seq}[1]{\{{#1}\}}



\begin{document}

\title[On quantitative convergence for stochastic processes]{On quantitative convergence for stochastic processes: Crossings, fluctuations and martingales}

\author{Morenikeji Neri and Thomas Powell}
\address{Department of Computer Science, University of Bath}
\email{$\{$mn728,trjp20$\}$@bath.ac.uk}
\subjclass[2020]{Primary: 03F10, 60G42, 03F03. Secondary: 03F60, 60G05}

\date{\today}

\begin{abstract}
We develop a general framework for extracting highly uniform bounds on local stability for stochastic processes in terms of information on fluctuations or crossings. This includes a large class of martingales: As a corollary of our main abstract result, we obtain a quantitative version of Doob's convergence theorem for $L_1$-sub- and supermartingales, but more importantly, demonstrate that our framework readily extends to more complex stochastic processes such as almost-supermartingales, thus paving the way for future applications in stochastic optimization. Fundamental to our approach is the use of ideas from logic, particularly a careful analysis of the quantifier structure of probabilistic statements and the introduction of a number of abstract notions that represent stochastic convergence in a quantitative manner. In this sense, our work falls under the `proof mining' program, and indeed, our quantitative results provide new examples of the phenomenon, recently made precise by the first author and Pischke, that many proofs in probability theory are proof-theoretically tame, and amenable to the extraction of quantitative data that is both of low complexity and independent of the underlying probability space.\medskip
\end{abstract}

\maketitle

\section{Introduction}
\label{sec:intro}

Applied proof theory (or \emph{proof mining}) \cite{kohlenbach:08:book} is a subfield of logic in which proofs of mathematical theorems are carefully analysed with the aim of strengthening those theorems, typically through obtaining quantitative information or showing that they hold in a generalised, more abstract setting. Proof mining has achieved considerable success over the last decades, primarily in areas related to nonlinear analysis, such as fixed point theory and convex optimization. This paper is part of a new approach to expand proof mining into probability theory. We aim to set the groundwork for new applications involving stochastic processes, particularly stochastic optimization, by providing the first quantitative study of martingales from this perspective. More specifically, we provide quantitative versions of some classic martingale convergence theorems based on locating regions of \emph{local stability} as in \cite{avigad-gerhardy-towsner:10:local}, which represent instances of a flexible, general method for obtaining bounds on local stability from quantitative information on fluctuations or upcrossings.   

\subsection{Background: Proof mining and probability}
\label{sec:intro:background}

Originating in Kreisel's ``unwinding'' program of the 1950s \cite{kreisel:51:proofinterpretation:part1,kreisel:52:proofinterpretation:part2}, and brought to maturity from the 1990s onwards by Kohlenbach and his collaborators, proof mining utilises ideas and techniques from logic to produce both quantitative and qualitative improvements of existing theorems. Progress in this area combines concrete applications, which usually involve the extraction of highly uniform computational information from seemingly nonconstructive proofs while at the same time showing that these proofs can be adapted to a more general setting, with so-called \emph{logical metatheorems}: theorems from logic that identify those applications as instances of general proof-theoretic phenomena, explaining their success and guiding the search for future applications.

A typical result here is the extraction, from a proof that some sequence converges, of an explicit rate of convergence. In situations where computable rates of convergence do not in general exist (a common occurrence first observed by Specker \cite{specker:49:sequence} and also present in martingale convergence, as we discuss in Remark \ref{rem:specker}), one can usually obtain so-called rates of \emph{metastability}, which informally speaking provide bounds on how far we need to look to find large intervals of local stability. Such metastable convergence theorems have been independently studied outside of logic by e.g. Tao (see \cite{tao:08:ergodic} or the essay \cite{tao:07:softanalysis}), where they can be naturally viewed as ``finitisations'' of infinitary convergence statements. One of the simplest examples of this phenomenon is the following metastable version of the monotone convergence principle:

\begin{theorem}[Finitary monotone convergence, cf. \cite{kohlenbach:08:book,tao:07:softanalysis}]
\label{res:finitary:monotone:convergence}
    Given $\varepsilon,K>0$ and $g:\NN\to\NN$ there exists some $N$ dependent only on these parameters such that whenever $\seq{x_n}$ is a monotone sequence in $[-K,K]$, there exists some $n\leq N$ such that $|x_i-x_j|<\varepsilon$ for all $n\leq i\leq j\leq n+g(n)$. Moreover, we can assign to $N$ the following concrete value:
    \[
    N:=\tilde g^{(\lceil 2K/\varepsilon\rceil)}(0)
    \]
    for $\tilde g(n):=n+g(n)$, where $\tilde g^{(i)}$ denotes the $i$th iteration of $\tilde g$.
\end{theorem}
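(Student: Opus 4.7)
My plan is to prove the result by a pigeonhole-style argument driven by monotonicity, working by contradiction. First I would reduce to the case where $\{x_n\}$ is monotone nondecreasing, since replacing $x_n$ by $-x_n$ covers the nonincreasing case. Setting $M:=\lceil 2K/\varepsilon\rceil$ and introducing the iterates $n_i:=\tilde g^{(i)}(0)$, so that $n_{i+1}=n_i+g(n_i)$ and $N=n_M$, I would consider the $M+1$ candidate starting points $n_0,n_1,\dots,n_M$, all of which are bounded by $N$, and aim to show that at least one of them witnesses local $\varepsilon$-stability over its window.

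The main step is to suppose for contradiction that none of these $n_i$ works, so in particular for each $i=0,\dots,M-1$ there exist $n_i\leq j\leq k\leq n_{i+1}$ with $|x_j-x_k|\geq\varepsilon$. Monotonicity then upgrades this to $x_{n_{i+1}}-x_{n_i}\geq x_k-x_j\geq\varepsilon$, and telescoping yields
\[
x_{n_M}-x_0\ \geq\ M\varepsilon\ \geq\ 2K.
\]
Combined with the hypothesis $x_0,x_{n_M}\in[-K,K]$, this inequality is forced to be an equality; in particular $x_{n_M}=K$.

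The step I expect to be the main obstacle is precisely this edge case, where $2K/\varepsilon$ is an integer so that the telescoping inequality alone does not produce a strict contradiction. To close the gap I would exploit the additional candidate $n=n_M\leq N$: for every $j\in[n_M,n_M+g(n_M)]$, monotonicity gives $x_j\geq x_{n_M}=K$ while the hypothesis gives $x_j\leq K$, so $x_j=K$ on the entire window; hence $|x_i-x_j|=0<\varepsilon$ throughout, contradicting the assumption that $n_M$ also fails. This yields the desired $n\leq N$. Since $M$ depends only on $K$ and $\varepsilon$ and the iteration depends only on $g$, the bound $N=\tilde g^{(M)}(0)$ is manifestly a function of the stated parameters alone, giving the quantitative content of the theorem.
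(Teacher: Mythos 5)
Your proof is correct, and it is worth noting that the paper does not actually prove this theorem directly: the statement is cited to Kohlenbach and Tao, and the paper's own derivation in Section \ref{sec:deterministic:monotone} deliberately takes a detour through crossings (via Proposition \ref{res:fluctuations:crossings}), which, as the authors themselves remark, only yields the weaker exponent $\lceil 4K/\varepsilon\rceil$. Your argument, by contrast, is essentially the canonical direct proof: pigeonhole the $M+1$ candidate windows $[n_i; n_{i+1}]$ and use monotonicity to telescope the assumed failures into a total variation exceeding the diameter $2K$ of the interval. This gives the sharp $\lceil 2K/\varepsilon\rceil$ that the theorem states.

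The one place where your write-up is more complicated than it needs to be is the edge-case handling when $2K/\varepsilon$ is an integer. Rather than extracting the weak inequality $x_{n_M} - x_{n_0} \geq M\varepsilon \geq 2K$ from the first $M$ windows, forcing equality, and then separately analysing the window at $n_M$, you could simply fold all $M+1$ candidates into the telescope at once: the contradiction hypothesis gives a fluctuation of size $\geq \varepsilon$ in each of $[n_0;n_1],\dots,[n_M;n_{M+1}]$, whence
\[
x_{n_{M+1}} - x_{n_0} \;\geq\; (M+1)\varepsilon \;>\; M\varepsilon \;\geq\; 2K,
\]
which strictly exceeds the diameter of $[-K,K]$ and gives the contradiction immediately with no case split. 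This streamlined version is also exactly the ``fluctuation bound $\Rightarrow$ learnable rate'' step that the paper proves in the discussion following its folklore proposition, specialised to the observation that a monotone sequence in $[-K,K]$ has at most $\lfloor 2K/\varepsilon\rfloor$ many $\varepsilon$-fluctuations. Your version is still valid; it just does slightly more work than necessary at the boundary.
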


To date, proof mining has focused primarily on nonlinear analysis and (nonstochastic) optimization, as documented in the recent survey papers \cite{kohlenbach:17:recent,kohlenbach:19:nonlinear:icm}, where the extraction of highly uniform rates of convergence or metastability is a standard result. Here, many convergence proofs make use of the monotone convergence principle in some way, and the resulting rates of metastability are very often of a similar form to those of Theorem \ref{res:finitary:monotone:convergence}, involving a simple iteration of the function, where the number of iterates is intuitively connected to how many $\varepsilon$-fluctuations the underlying sequence can experience. Fluctuations and related concepts play a fundamental role in the convergence theory of martingales, and it is therefore natural to ask whether similarly elegant finitisations are possible for stochastic convergence. 

Probability theory, in general, represents a tantalising and hitherto largely unexplored area of application for proof mining, where a particular challenge is posed by the subtlety of convergence in the probabilistic setting and the fact that even the simplest proofs typically make use of highly nonconstructive techniques. Work in this direction is largely restricted to some important case studies of Avigad et al. in ergodic theory and measure theory \cite{avigad-dean-rute:12:dominated,avigad-gerhardy-towsner:10:local,avigad-rute:14:oscillation}, along with Arthan and Oliva's study of the Borel-Cantelli lemma \cite{arthan-oliva:21:borel-cantelli}. More recently, the first author, together with Pischke, has established a first general metatheorem for probability theory \cite{neri-pischke:pp:formal}, developing a proof-theoretically tame system for reasoning about probability spaces and explaining the success of the aforementioned case studies, while reinforcing the idea that probability represents a promising area of application for proof-theoretic methods. These works set the stage perfectly for a dedicated quantitative study of stochastic processes.

\subsection{Main contributions}
\label{sec:intro:contributions}

The purpose of this paper is to lay down some foundations towards future applications of proof mining in stochastic optimization by providing a quantitative study of martingales, one of the central concepts in this area. Here, important quantitative information is already provided through fluctuation bounds and upcrossing inequalities, where the latter form a key ingredient in establishing classic martingale convergence theorems \cite{doob:53:stochastic}. Fluctuations and upcrossings have been explored in depth in the context of both probability theory and ergodic theory (see, for example, \cite{hochman:09:upcrossing,Ivanov:oscillations:96,jones-ostrovskii-rosenblatt:96:square,kachurovskii:96:convergence,kachurovskii2007general,kalikow1999fluctuations,warren:21:fluctuations}), and from the perspective of logic in e.g. \cite{avigad-gerhardy-towsner:10:local,avigad-rute:14:oscillation,kohlenbach-safarik:14:fluctuations}. The main technical results of this paper focus on the route from fluctuations or upcrossing bounds to metastable convergence rates, in other words, how one can use such bounds to locate regions of local stability for stochastic processes.

We can break down our contributions into two main parts. On the more foundational side, by exploiting the monotonicity properties of the quantifier structure of various deterministic notions, we arrive at a number of abstract definitions for bounding fluctuations and crossings, along with general theorems that relate these to various notions of metastable convergence. In this probabilistic setting, we consider two distinct forms of metastable convergence already studied in \cite{avigad-dean-rute:12:dominated} -- pointwise and uniform -- though crucially, we consider simplified `learnable' versions that focus on iterations of the function $g$.  In this way, we are able to clearly relate these notions of metastable convergence with bounds on fluctuations and upcrossings that arise naturally in the quantitative study of convergence for stochastic processes and ergodic averages. Through our abstract approach we obtain generalisations of results of Kachurovskii \cite{kachurovskii:96:convergence} and, more recently, of Avigad et al.\ \cite{avigad-gerhardy-towsner:10:local} that apply to arbitrary stochastic processes under very general and abstract assumptions.

Our second main contribution consists of providing specific rates of metastability for a number of important stochastic convergence theorems. As a relatively simple example, we prove the following stochastic analogue of Theorem \ref{res:finitary:monotone:convergence}, which forms a finitary version of Doob's convergence theorem for $L_1$-bounded sub- and supermartingales:

\begin{theorem}
    [Finitary martingale convergence]
    \label{res:finitary:supermartingale}
    Given $\varepsilon,\lambda,K>0$ and $g:\NN\to\NN$ there exists some $N$ dependent only on these parameters such that whenever $\seq{X_n}$ is a sub- or supermartingale with 
    \[
    \sup_{n\in\NN}\EE|X_n|<K
    \]
    there exists some $n\leq N$ such that
    \[
    |X_i-X_j|<\varepsilon \ \ \ \mbox{for all $n\leq i\leq j\leq n+g(n)$}
    \]
    with probability $>1-\lambda$. Moreover, we can assign to $N$ the following concrete value:
    \[
    N:=\tilde g^{(\lceil c(K/\lambda\varepsilon)^2\rceil)}(0)
    \]
    for $\tilde g(n):=n+g(n)$ and suitable constant $c>0$ that can be calculated explicity.
\end{theorem}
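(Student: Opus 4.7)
The plan is to derive the result from Doob's classical upcrossing inequality, together with the abstract framework developed earlier in the paper for converting quantitative fluctuation or upcrossing bounds into rates of metastable convergence. Since a submartingale becomes a supermartingale after negation, it suffices to handle the supermartingale case. The starting point is Doob's upcrossing inequality: for each pair of rationals $a<b$, the expected number of $[a,b]$-upcrossings of $\seq{X_n}$ up to time $n$ is at most $(K+|a|)/(b-a)$ uniformly in $n$, using only the hypothesis $\sup_n\EE|X_n|<K$.

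The next step is to convert this into a pathwise bound on $\varepsilon$-fluctuations that holds on an event of large probability. I would first apply Doob's maximal inequality to restrict to the event $\{\sup_n|X_n|\leq K/\lambda\}$, which has probability at least $1-\lambda$; on this event the sample path is confined to $[-K/\lambda,K/\lambda]$, so every $\varepsilon$-fluctuation must cross at least one level in a fixed $\varepsilon$-grid containing $O(K/(\lambda\varepsilon))$ points. Summing Doob's upcrossing bound over these levels yields an expected bound of order $(K/(\lambda\varepsilon))^{2}$ on the total number of $\varepsilon$-fluctuations, and a second application of Markov's inequality produces an event of probability $>1-\lambda$ on which every path admits at most $M:=c(K/(\lambda\varepsilon))^{2}$ disjoint $\varepsilon$-fluctuations, for an explicit constant $c$ obtained by suitably splitting the probability budget between the maximal-inequality step and the Markov step.

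Once this fluctuation bound is in force pointwise on an event of probability $>1-\lambda$, the desired metastability statement follows by exactly the argument behind Theorem \ref{res:finitary:monotone:convergence}: any sequence admitting at most $M$ disjoint $\varepsilon$-fluctuations must contain an interval $[n,n+g(n)]$ of $\varepsilon$-stability with $n\leq\tilde g^{(M)}(0)$. Crucially, $N:=\tilde g^{(M)}(0)$ depends only on $M$ and $g$, not on the particular realisation of $\seq{X_n}$, even though the witness $n$ itself depends on the path. This passage from a pointwise fluctuation bound to a uniform metastability rate is precisely the content of the abstract theorems on crossings and metastable convergence developed earlier in the paper, so plugging the above fluctuation bound into that machinery yields the result.

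The main obstacle is not the logical structure, which reduces cleanly to monotone-convergence-style iteration once the pathwise fluctuation bound is available, but rather calibrating the probabilistic estimates so that the losses from the maximal inequality, the level discretisation, and the two uses of Markov all combine into the correct quadratic dependence $(K/(\lambda\varepsilon))^{2}$ with an explicit and reasonable constant $c$. A subsidiary delicate point is that the upcrossing inequality gives bounds only for finite truncations $X_0,\dots,X_n$, so one must take care that the resulting bound on disjoint $\varepsilon$-fluctuations is genuinely uniform in the time horizon; this is standard but needs to be handled cleanly to ensure the final metastability rate is independent of the underlying probability space, as demanded by the general program of the paper.
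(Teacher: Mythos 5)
There is a genuine gap: your argument establishes a \emph{pointwise} metastable rate, but the theorem asserts a \emph{uniform} one. Reread the statement: a single $n\leq N$ must witness the $\varepsilon$-stability of $[n;n+g(n)]$ with probability $>1-\lambda$. In your outline, after the second Markov step you obtain an event of high probability on which each path admits at most $M$ disjoint $\varepsilon$-fluctuations, and you then observe that every such path has \emph{some} stability interval with $n\leq\tilde g^{(M)}(0)$ depending on the path. This is precisely Definition~\ref{def:quantitative:almostsureconvergence}(c), a \emph{pointwise} metastable rate. The claim that the abstract machinery then upgrades this to the uniform version is the missing step, and it is not free: the only general passage from pointwise to uniform metastability is the Avigad--Dean--Rute bar-recursive construction (Theorem~\ref{res:egorov:avigad}), which, as Remark~\ref{rem:blowup} stresses, causes a phenomenal blowup in complexity and would in particular destroy the clean $\lceil c(K/\lambda\varepsilon)^2\rceil$ bound you are trying to prove.

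The fix is to not throw away the expected fluctuation bound by applying Markov. The key observation (Theorem~\ref{res:abstract:fluctuations:to:rates}(ii) and its relativised incarnation in Proposition~\ref{res:fluctuations:crossings:unif}/Theorem~\ref{res:crossings:metastability}) is that a bound on $\EE[J_{\infty,A}]$ directly yields a bound on $\sum_n\PP(A(a_n,b_n))$ for \emph{any} sequence of intervals, and hence forces some single $n$ with $\PP(A(a_n,b_n))<\lambda$ --- that is the uniform learnable rate. Two further technical points distinguish the paper's route from yours. First, since $\EE[\flucinf{\varepsilon}{X_n}]$ can genuinely be infinite for $L_1$-martingales (Remark~\ref{rem:doob:variations}), the relativisation in Proposition~\ref{res:fluctuations:crossings:unif} is done by intersecting with $\{|X_{a_i}|\leq M\}$ at the \emph{start} of each candidate interval, not with the global event $\{\sup_n|X_n|\leq M\}$; correspondingly only a modulus of \emph{tightness} (Markov on each $\EE|X_n|$) is needed, not Doob's maximal inequality, which in any case does not directly apply to $|X_n|$ for an arbitrary $L_1$-bounded sub- or supermartingale. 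Second, the paper handles the general case by the Doob decomposition $X_n=M_n+A_n$, splitting $M_n=M_n^+-M_n^-$ into nonnegative submartingales for which the upcrossing inequality has the right shape, treating the monotone part $A_n$ via Theorem~\ref{res:unif:monotone}, and combining via the composition lemma for learnable rates (Lemma~\ref{res:sum}). Your grid-and-crossings intuition is exactly the right starting point, but you must keep the argument at the level of expectations (the $L_1$-crossing modulus) until the very end if you want the uniform conclusion at the stated complexity.
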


Our main abstract quantitative theorem allows us to prove explicit bounds $N$ on the local stability of stochastic processes under a range of different assumptions (where, in most cases, we show that our bounds are optimal in a certain sense). Most importantly, we give an example of how our abstract results apply more generally to almost-supermartingales with error terms, looking ahead to further applications in stochastic optimization. Here, numerous convergence theorems are proven by reducing stochastic algorithms to almost-supermartingales for which convergence can be established, the most well known result of this kind being the Robbins-Siegmund theorem \cite{robbins-siegmund:71:lemma} (see \cite{franci-grammatico:convergence:survey:22} for a recent survey of how martingales convergence in the form of the Robbins-Siegmund theorem is used in everything from stochastic gradient descent methods to computing Nash-equilibria). Not only is our analysis of the Doob martingale convergence theorem essential for providing a quantitative version of more sophisticated results on almost-supermartingales but our abstract treatment of fluctuation and upcrossing bounds is aimed to help facilitate its easy extension to such results, as illustrated in our Theorem \ref{res:optimization}. In all cases, our bounds are highly uniform in the sense that they are independent of both the underlying probability space and the precise distribution of the random variables (typically, the only information required is a bound on their mean). That such uniform quantitative information can be generally obtained from proofs in probability theory is explained in \cite{neri-pischke:pp:formal} and further discussed in Section \ref{sec:future:prooftheory} below.

\subsection{This paper as part of a wider project}
\label{sec:intro:project}

We view the results presented here as belonging to a broader effort by the authors and their collaborators to provide a major advance of proof mining into probability theory, incorporating both new quantitative theorems (as in this paper) together with logical results (as in \cite{neri-pischke:pp:formal}) that seek to devise `proof-theoretically tame' logical systems for probability together with metatheorems that guarantee the extractability of quantitative information from proofs formalisable within those systems. For this reason, we conclude the paper with an extended discussion outlining the importance of our results within this broader context, both how they provide insight into proof-theoretic aspects of stochastic processes and open the door towards more advanced applications in stochastic optimization. 

\subsection{Related work}
\label{sec:intro:related}

Many quantitative results on martingales and closely related notions from measure theory such as ergodic averages appear in the mainstream (non-logic) literature. Important examples of this include the detailed study by Kachurovskii \cite{kachurovskii:96:convergence}, along with the work of Jones et al. (e.g. \cite{jones:1998:oscillation,jones:1999:counting}) -- which have been further developed in case studies in proof-mining (see in particular \cite{avigad-rute:14:oscillation}). Quantitative results along these lines continue to be developed (e.g. \cite{jones:etal:08:variational,kachurovskii-podvigin:16:ergodic,tao:etal:10:carleson}). These often take the form of variational inequalities or bounds on oscillations (i.e. fluctuations), and particularly for convergence in the $L_2$-norm, rates of metastability can sometimes be directly inferred from these. On the other hand, our focus is on metastable formulations of \emph{almost sure convergence}, and these seem much more fundamentally connected to upcrossing inequalities, which have received less attention. Nevertheless, throughout the paper, we take care to highlight how our abstract theorems connect with known quantitative results, which, among other things, help us provide examples and optimality arguments.

In terms of related work in logic outside of proof mining, it should be noted that martingales have been extensively studied from the perspective of computability theory and algorithmic randomness, where rates of convergence also play a central role: See in particular \cite{hoyrup-rute:21:algorithmic:randomness,rute:etal:algorithmic:doob} and also the preprint \cite{rute:martingale:draft}. However, the focus is quite different: There, the aim is to impose additional computability structure on martingales, primarily with the aim of achieving computable rates of convergence, whereas we assume nothing at all about the underlying probability space or random variables in order to obtain quantitative information that is independent of their structure. In that way, we do not study the computability theory of probabilistic convergence as such, but instead aim to provide uniform finitisations of nonconstructive convergence theorems. 

It should also be noted that metastability in ergodic theory has been studied from the model theoretic perspective via ultraproducts \cite{avigad-iovino:13:ultraproducts,goldbring-towsner:14:measures}. Again, this approach is very different from ours, for example we produce concrete bounds for metastable convergence theorems, whereas the techniques of e.g. \cite{avigad-iovino:13:ultraproducts} only establish the \emph{existence} of uniform rates of metastability. Nevertheless, in the future it would be fascinating to explore whether a concrete connection can be found between the \emph{proof theoretic} approach to probabilistic convergence exemplified here and in \cite{neri-pischke:pp:formal}, and the very different world of model theory and ultraproducts\footnote{The authors are grateful to Henry Towsner for suggesting that there may be interesting parallels between these two approaches to metastability.}.

\subsection{Organisation}
\label{sec:intro:organisation}

Broadly speaking, this paper consists of four main parts, each with a somewhat different emphasis. In Sections \ref{sec:deterministic}-\ref{sec:moduli}, we give an abstract account of several notions of probabilistic convergence and arrive at a number of important definitions that arise from logical considerations, specifically the quantifier structure of almost sure statements. Sections \ref{sec:pointwise} and \ref{sec:uniform} then set out our main abstract quantitative results, which relate crossings to (learnable) metastable rates in two different ways. The longer Section \ref{sec:applications} then provides a series of concrete applications, primarily in martingale theory. Finally, in Section \ref{sec:future}, we give a more informal discussion on the underlying logical phenomena at play, along with open questions and future work. We stress that no knowledge of logic is required to understand the main results of the paper: Mathematical logic is restricted to Section \ref{sec:future:prooftheory}, which is included for the benefit of the more foundationally inclined reader.

\section{Crossings, fluctuations and Cauchy convergence}
\label{sec:deterministic}

We begin by presenting several characterisations of convergence for sequences of real numbers. The relationship between these notions is well-known, and, with the exception of finite crossings, have been studied from a computational perspective, for example, in the setting of the mean ergodic theorem in \cite{avigad-rute:14:oscillation} and from a deeper logical standpoint in \cite{kohlenbach-safarik:14:fluctuations}. In this section, we complete the picture by making explicit the quantitative relationship between finite crossings and the other notions and, as such, draw a simplified picture (in the nonstochastic setting) of the main ideas we aim to explore for stochastic processes in the remainder of the paper. We also introduce the concept of learnable rates (due to \cite{kohlenbach-safarik:14:fluctuations}, where learnability is defined much more generally) and provide a technical result (Lemma \ref{res:sequence:to:function}) that formulates the existence of simple learnable rates in terms of a simpler property, which will later inspire our definitions of uniform and pointwise stochastic learnability in Section \ref{sec:finitizing}.

In what follows, we write 
\[
[m;n]:=\{m,m+1,\ldots,n-1,n\}
\]
and adopt the convention that $[m;n]=\emptyset$ if $n<m$. We define $\fluc{N}{\varepsilon}{x_n}$ to be the number of $\varepsilon$-fluctuations that occur in the initial segment $\{x_0,\ldots,x_{N-1}\}$ i.e. the maximal $k\in\NN$ such that there exists
\begin{equation*}
i_1<j_1\leq i_2<j_2\leq\ldots \leq i_k<j_k< N \mbox{ with } |x_{i_l}-x_{j_l}|\geq \varepsilon
\end{equation*}
for all $l=1,\ldots,k$. We write 
\begin{equation*}
\flucinf{\varepsilon}{x_n}=\lim_{N\to\infty}\fluc{N}{\varepsilon}{x_n}
\end{equation*}
for the total number of $\varepsilon$-fluctuations that occur in $\seq{x_n}$. Crossings are defined similarly: For any $\alpha<\beta$ we define $\crs{N}{[\alpha,\beta]}{x_n}$ to be the number of times that $\{x_0,\ldots,x_{N-1}\}$ crosses the interval $[\alpha,\beta]$, specifically the maximal $k\in\NN$ such that there exists
\begin{equation*}
i_1<j_1\leq i_2<j_2\leq\ldots \leq i_k<j_k< N \mbox{ with } x_{i_l}\leq \alpha \mbox{ and }\beta\leq x_{j_l} \mbox{ or vice-versa}
\end{equation*}
for all $l=1,\ldots,k$, with
\begin{equation*}
\crsinf{[\alpha,\beta]}{x_n}=\lim_{N\to\infty}\crs{N}{[\alpha,\beta]}{x_n}
\end{equation*}
the total number of $[\alpha,\beta]$-crossings that occur in $\seq{x_n}$. 

\begin{proposition}[Folklore]
The following statements are all equivalent to $\seq{x_n}$ being convergent.
\begin{enumerate}[(a)]

	\item (Cauchy property) For all $\varepsilon>0$ there exists some $n\in\NN$ such that $i,j\geq n$ implies $|x_i-x_j|<\varepsilon$.\smallskip
	
	\item (Finite crossings) $\seq{x_n}$ is bounded and $\crsinf{[\alpha,\beta]}{x_n}<\infty$ for all $\alpha<\beta$.\smallskip
	
	\item (Finite fluctuations) $\flucinf{\varepsilon}{x_n}<\infty$ for all $\varepsilon>0$.\smallskip
	
	\item (Metastability) For all $\varepsilon>0$ and $g:\NN\to\NN$ there exists some $n\in\NN$ such that $|x_i-x_j|<\varepsilon$ for all $i,j\in [n;n+ g(n)]$.

\end{enumerate}
\end{proposition}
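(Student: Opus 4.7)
The plan is to treat the Cauchy property (a) as the central notion and reduce everything to it, since the equivalence of (a) with convergence of $\seq{x_n}$ in $\RR$ is the standard completeness statement. I would then prove the cycle (a) $\Rightarrow$ (b) $\Rightarrow$ (c) $\Rightarrow$ (a), together with the separate pair (a) $\Leftrightarrow$ (d).

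For the easy directions: (a) $\Rightarrow$ (b) follows by noting a Cauchy sequence is automatically bounded, and given any $\alpha<\beta$, if we pick $n$ past the Cauchy threshold for $\varepsilon := \beta-\alpha$, no $[\alpha,\beta]$-crossing can occur beyond index $n$, capping $\crsinf{[\alpha,\beta]}{x_n}$ by $n$. For the pair involving metastability, (a) $\Rightarrow$ (d) is instantaneous — any Cauchy threshold $n$ for $\varepsilon$ works uniformly in $g$ — and (d) $\Rightarrow$ (a) is the classical finitisation argument: if (a) failed at some $\varepsilon>0$, one would define $g(n)$ so that $[n;n+g(n)]$ contains a pair witnessing failure of Cauchyness, directly contradicting (d) at that $\varepsilon$ and $g$. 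Similarly, (c) $\Rightarrow$ (a) is the contrapositive: failure of (a) at some $\varepsilon$ supplies an inductively extracted infinite sequence of disjoint $\varepsilon$-fluctuating pairs, forcing $\flucinf{\varepsilon}{x_n}=\infty$.

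The core combinatorial step is (b) $\Rightarrow$ (c), which is the only place where boundedness is essentially used. Assuming $\seq{x_n}\subseteq[-K,K]$ and fixing $\varepsilon>0$, I would partition $[-K,K]$ into finitely many closed subintervals $[\alpha_s,\beta_s]$ each of width strictly less than $\varepsilon$ (e.g.\ of width $\varepsilon/2$). Any pair $i_l<j_l$ realising an $\varepsilon$-fluctuation satisfies $|x_{i_l}-x_{j_l}|\geq\varepsilon$, so there must exist at least one subinterval $[\alpha_s,\beta_s]$ entirely sandwiched between $\min(x_{i_l},x_{j_l})$ and $\max(x_{i_l},x_{j_l})$; this pair therefore contributes a crossing of $[\alpha_s,\beta_s]$. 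Since the disjoint pairs witnessing fluctuations are all disjoint, a pigeonhole over the finitely many subintervals gives
\[
\flucinf{\varepsilon}{x_n}\;\leq\;\sum_s \crsinf{[\alpha_s,\beta_s]}{x_n}\;<\;\infty.
\]

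The main obstacle, such as it is, is ensuring the bookkeeping in this pigeonhole step is clean — in particular, checking that the disjointness of fluctuation pairs carries over to disjointness of the induced crossing pairs within each chosen subinterval, so that the sum above is genuinely bounded by the (finite) total number of crossings. The remaining steps are standard manipulations, and no further ideas are needed beyond the contrapositive/finitisation pattern used already for (d) $\Rightarrow$ (a).
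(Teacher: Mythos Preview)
Your proposal is correct, and the paper does not actually prove this proposition---it is stated as folklore with no accompanying proof. That said, your key combinatorial step (b) $\Rightarrow$ (c) via partitioning $[-K,K]$ into subintervals of width $\leq\varepsilon/2$ and pigeonholing fluctuations into crossings is exactly the idea the paper makes quantitative immediately afterwards in Proposition~\ref{res:fluctuations:crossings}; your concern about bookkeeping is unfounded, since the fluctuation pairs assigned to a fixed subinterval retain their ordering and hence count as distinct crossings of that subinterval.
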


It is easy to show that if $\phi(\varepsilon)$ is a rate of Cauchy convergence for $\seq{x_n}$ in the sense that $i,j\geq \phi(\varepsilon)$ implies $|x_i-x_k|<\varepsilon$, then it is also a bound on $\varepsilon$-fluctuations in that $\flucinf{\varepsilon}{x_n}\leq \phi(\varepsilon)$, since any $\varepsilon$-fluctuations have to occur before index $\phi(\varepsilon)$. 

Moreover, if $\phi(\varepsilon)$ is any function satisfying $\flucinf{\varepsilon}{x_n}\leq \phi(\varepsilon)$, then $\Phi(\varepsilon,g):=\tilde{g}^{(\phi(\varepsilon))}(0)$ for $\tilde g(n):=n+g(n)$ is a so-called rate of metastability for $\seq{x_n}$ in the sense that
\begin{equation}
\label{eqn:rateofmetastability:reals}
\exists n\leq \Phi(\varepsilon,g)\, \forall i,j\in [n;n+g(n)](|x_i-x_j|<\varepsilon).
\end{equation}
Supposing for contradiction that this was not the case, then we would have
\begin{equation*}
\exists i,j\in [\tilde{g}^{(e)}(0);\tilde{g}^{(e+1)}(0)](|x_i-x_j|\geq \varepsilon)
\end{equation*}
for all $e=0,\ldots,\phi(\varepsilon)$, and thus $\flucinf{\varepsilon}{x_n}\geq \phi(\varepsilon)+1$.

As such, there is a clear quantitative route from both (a) to (c) and from (c) to (d). Neither of these is (quantitatively) reversible as shown in \cite{kohlenbach-safarik:14:fluctuations}: There exist sequences with a computable bound on $\flucinf{\varepsilon}{x_n}$ but no computable rate of Cauchy convergence, and also sequences with a computable rate of metastability but no computable bound on $\flucinf{\varepsilon}{x_n}$.

\subsection{Finite crossings}
\label{sec:deterministic:crossings}

We briefly complete the quantitative picture by showing how (b) fits into the above: Specifically, having a computable bound on finite crossings (along with a bound on the whole sequence) is equivalent to the existence of a computable bound on fluctuations. This is established using a standard idea (see, e.g. the proof of Theorem 27 of \cite{kachurovskii:96:convergence}), several variants of which will be presented later. In probability theory, one typically sees bounds specifically on \emph{upcrossings}, but since upcrossings and downcrossings differ by at most one, a bound on crossings is essentially equivalent. For us, it is slightly easier to reason directly about crossings.

\begin{definition}
\label{def:P:subintervals}
Given some $M>0$ and $l\in\NN$, let $\mathcal{P}(M,l)$ denote the partition of $[-M,M]$ into $l$ equally sized closed subintervals i.e.
\[
\mathcal{P}(M,l):=\left\{\left[-M+\frac{2Mi}{l},-M+\frac{2M(i+1)}{l}\right]\, \Bigl| \, i=0,\ldots,l-1\right\}.
\]
\end{definition}

\begin{proposition}
\label{res:fluctuations:crossings}
Let $\seq{x_n}$ be a sequence of real numbers. 
\begin{enumerate}[(i)]

	\item\label{res:fluctuations:crossings:i} If $\flucinf{\varepsilon}{x_n}\leq \phi(\varepsilon)$ for all $\varepsilon>0$ then $\crsinf{[\alpha,\beta]}{x_n}\leq \phi(\beta-\alpha)$ for all $\alpha<\beta$. \smallskip
	
	\item\label{res:fluctuations:crossings:ii} If $\crsinf{[\alpha,\beta]}{x_n}\leq \psi(\alpha,\beta)$ for all $\alpha<\beta$ and also $|x_n|\leq M$ for all $n\in\NN$ then $\flucinf{\varepsilon}{x_n}\leq \phi(\varepsilon)$ for all $\varepsilon>0$ where
	\begin{equation*}
	\phi(\varepsilon):=l\cdot \max\{\psi(\alpha,\beta)\, | \, [\alpha,\beta]\in \mathcal{P}(M,l)\} \ \ \ \mbox{for } l:=\Bigl\lceil \frac{4M}{\varepsilon}\Bigr\rceil.
	\end{equation*}

\end{enumerate}
\end{proposition}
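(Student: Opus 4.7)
For part (\ref{res:fluctuations:crossings:i}) the plan is short: the same indices that witness $k$ many $[\alpha,\beta]$-crossings automatically witness $k$ many $(\beta-\alpha)$-fluctuations, since at each such pair one value sits at or below $\alpha$ while the other sits at or above $\beta$, forcing them to differ by at least $\beta-\alpha$. Passing to the limit in $N$ gives $\crsinf{[\alpha,\beta]}{x_n}\leq \flucinf{\beta-\alpha}{x_n}\leq \phi(\beta-\alpha)$.

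Part (\ref{res:fluctuations:crossings:ii}) is the interesting direction, and the strategy is to embed each $\varepsilon$-fluctuation into a crossing of some subinterval of $\mathcal{P}(M,l)$ and then apply pigeonhole. The key quantitative choice is $l:=\lceil 4M/\varepsilon\rceil$, which ensures that each subinterval of $\mathcal{P}(M,l)$ has width $2M/l\leq \varepsilon/2$. I would suppose that $i_1<j_1\leq i_2<j_2\leq\ldots\leq i_k<j_k$ witness $k$ many $\varepsilon$-fluctuations, and assign to each fluctuation a subinterval of the partition as follows.

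For a given index $l$, and assuming without loss of generality that $x_{i_l}\leq x_{j_l}$, take $\alpha$ to be the smallest partition endpoint that lies at or above $x_{i_l}$; such an $\alpha$ exists and is strictly less than $M$ by the boundedness assumption $|x_n|\leq M$. Then $\alpha\leq x_{i_l}+2M/l$, so the adjacent endpoint $\beta:=\alpha+2M/l$ satisfies
\[
\beta\leq x_{i_l}+\tfrac{4M}{l}\leq x_{i_l}+\varepsilon\leq x_{j_l},
\]
meaning $[\alpha,\beta]\in\mathcal{P}(M,l)$ and the pair $(i_l,j_l)$ crosses it. Since we have now distributed $k$ fluctuations among the $l$ subintervals of $\mathcal{P}(M,l)$, pigeonhole supplies some $[\alpha,\beta]\in\mathcal{P}(M,l)$ receiving at least $\lceil k/l\rceil$ of them. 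Because the original fluctuation indices are interleaved, so is any subset, and hence these $\lceil k/l\rceil$ pairs constitute genuine $[\alpha,\beta]$-crossings, yielding
\[
k/l\leq \crsinf{[\alpha,\beta]}{x_n}\leq \psi(\alpha,\beta)\leq \max\{\psi(\alpha',\beta')\mid[\alpha',\beta']\in\mathcal{P}(M,l)\}.
\]
Rearranging gives $k\leq\phi(\varepsilon)$, and taking the supremum over $k$ delivers $\flucinf{\varepsilon}{x_n}\leq\phi(\varepsilon)$.

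The main obstacle is bookkeeping rather than anything conceptual: one needs to choose $l$ just large enough that a full subinterval of $\mathcal{P}(M,l)$ fits inside the gap $[x_{i_l},x_{j_l}]$ of every $\varepsilon$-fluctuation, and to verify that the pigeonhole selection preserves the strict interleaving property in the definition of $\crsinf{[\alpha,\beta]}{x_n}$. Both points reduce to the inequality $2\cdot(2M/l)\leq\varepsilon$ and the trivial observation that a subsequence of a strictly interleaved sequence of index pairs is again strictly interleaved.
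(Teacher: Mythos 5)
Your proof is correct and follows the same strategy as the paper's: part (i) is immediate since every $[\alpha,\beta]$-crossing forces a jump of size at least $\beta-\alpha$, and for part (ii) you partition $[-M,M]$ into $l=\lceil 4M/\varepsilon\rceil$ subintervals of width $\leq\varepsilon/2$, observe that every $\varepsilon$-fluctuation must fully cross one of them, and then apply pigeonhole. The only difference is that you spell out in detail two steps the paper leaves implicit — the explicit construction of which subinterval a given fluctuation crosses (and the check that its right endpoint stays $\leq M$), and the observation that passing to a subset of interleaved index pairs preserves interleaving — which is exactly the bookkeeping the paper waves at with ``a single $\varepsilon$-fluctuation of $\seq{x_n}$ crosses at least one of the $I_j$.'' (A minor quibble: you reuse the letter $l$ both as the fluctuation index in $i_l,j_l$ and as the partition size $\lceil 4M/\varepsilon\rceil$, which makes expressions like $x_{i_l}+2M/l$ momentarily ambiguous; this clash is inherited from the paper's own conventions but is worth avoiding.)
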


\begin{proof}
Part (\ref{res:fluctuations:crossings:i}) is immediate, so we focus on proving (\ref{res:fluctuations:crossings:ii}). Fix $\varepsilon>0$ and divide $[-M,M]$ into $l=\lceil 4M/\varepsilon\rceil$ equal subintervals, which we label $I_j=[\alpha_j,\beta_j]$ for $j=1,\ldots,l$. Since $\beta_j-\alpha_j\leq \varepsilon/2$ and $\seq{x_n}$ is contained in $[-M,M]$, a single $\varepsilon$-fluctuation of $\seq{x_n}$ crosses at least one of the $I_j$. Therefore if $\flucinf{\varepsilon}{x_n}\geq k$ then there must be some interval $[\alpha_j,\beta_j]$ with at least $k/l$ crossings i.e.
\begin{equation*}
\crsinf{[\alpha_j,\beta_j]}{x_n}\geq \frac{k}{l}
\end{equation*}
for this particular $j$, and thus
\begin{equation*}
k\leq l\cdot\psi(\alpha_j,\beta_j)
\end{equation*}
from which the bound follows.
\end{proof}

\subsection{Monotone sequences}
\label{sec:deterministic:monotone}

In the very simple case of monotone bounded sequences, we can put the previous sections together to obtain what is essentially the canonical rate of metastability in this case as in Theorem \ref{res:finitary:monotone:convergence}, though the detour through crossings adds a factor of $2$. Specifically, suppose that $\seq{x_n}\subseteq [-M,M]$ is monotone. Then, in particular, the sequence has at most one $[\alpha,\beta]$-crossing, i.e. $\crsinf{[\alpha,\beta]}{x_n}\leq 1$ for all $\alpha<\beta$. By Proposition \ref{res:fluctuations:crossings} it therefore follows that
\begin{equation*}
\flucinf{\varepsilon}{x_n}\leq \Bigl\lceil \frac{4M}{\varepsilon}\Bigr\rceil
\end{equation*}
for any $\varepsilon>0$, and thus a rate of metastability for $\seq{x_n}$ is given by
\begin{equation*}
\Phi(\varepsilon,g)=\tilde g^{(\lceil 4M/\varepsilon\rceil)}(0).
\end{equation*}

\subsection{Learnable rates}
\label{sec:deterministic:learnability}

The rate of metastability for monotone bounded sequences has a particularly elegant form, namely an iteration $\tilde g^{(e)}(0)$ of the modified function $\tilde g$. We call such rates \emph{learnable}, loosely following the terminology of \cite{kohlenbach-safarik:14:fluctuations} where the Cauchy property forms a simple instance of the class of \emph{effectively learnable} formulas. The semantic idea behind this terminology is that to find an interval $[n;n+g(n)]$ of local stability, we first just guess $n:=0$, and if this does not work, we update our guess to $n:=\tilde g(0)$, and then $n:=\tilde g(\tilde g(0))$, and so on, knowing that there is an upper bound on how many `mind-changes' are required. Learnable rates (more generally construed) arise routinely from the analysis of convergence proofs \cite{kohlenbach-safarik:14:fluctuations} and also more generally from nonconstructive proofs \cite{aschieri:11:phd,powell:16:learning}.

As we will see below, this phenomenon extends to the stochastic setting, and in our case, it is both convenient and insightful to identify mathematical properties that correspond directly to the existence of learnable metastable rates. The following lemma is particularly helpful in this regard:

\begin{lemma}
    \label{res:sequence:to:function}
    Let $a_0<b_0\leq a_1<b_1\leq \ldots$ be sequences of natural numbers. Then we can define a function $g:\NN\to\NN$ in such a way that $\tilde g^{(i)}(0)=b_{i-1}$ for $i\geq 1$, and more generally such that for any $n\in\NN$ we have $[a_m;b_m]\subseteq [n;n+g(n)]$ for the least $m$ such that $n\leq a_m$.
\end{lemma}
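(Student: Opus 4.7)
The plan is to simply read off the definition of $g$ from the more general requirement in the statement, and then verify the iteration identity as a special case. For each $n \in \NN$, note that because $a_0 < b_0 \leq a_1 < b_1 \leq \ldots$, the sequence $\seq{a_m}$ is strictly increasing and hence unbounded, so there exists a least $m = m(n)$ satisfying $n \leq a_m$. I would define
\[
g(n) := b_{m(n)} - n,
\]
which is a non-negative natural number since $n \leq a_{m(n)} < b_{m(n)}$. Intuitively this is the \emph{smallest} value of $g(n)$ that forces $[a_{m(n)}; b_{m(n)}]$ to sit inside $[n; n+g(n)]$.

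The general containment claim is then immediate from unpacking the definition: one has $n + g(n) = b_{m(n)}$ and $n \leq a_{m(n)}$, so
\[
[a_{m(n)}; b_{m(n)}] \subseteq [n; n+g(n)].
\]
For the iteration identity $\tilde g^{(i)}(0) = b_{i-1}$, I would proceed by induction on $i \geq 1$. The base case $i=1$ uses that $0 \leq a_0$, so $m(0) = 0$ and hence $\tilde g(0) = 0 + g(0) = b_0$. For the induction step, the sandwich $a_{i-1} < b_{i-1} \leq a_i$ pins down $m(b_{i-1}) = i$, which yields $g(b_{i-1}) = b_i - b_{i-1}$ and therefore $\tilde g(b_{i-1}) = b_i$; combining with the induction hypothesis $\tilde g^{(i)}(0) = b_{i-1}$ gives $\tilde g^{(i+1)}(0) = b_i$.

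There is essentially no serious obstacle here, since once one sees that the desired condition \emph{defines} $g$ pointwise, the rest is bookkeeping. The only genuinely substantive observation is the well-definedness of $m(n)$ for every $n$, which rests on the strict alternation $a_m < b_m \leq a_{m+1}$ forcing $a_m \to \infty$; this is the reason the lemma needs the hypothesis in that precise form rather than merely $a_m \leq b_m$.
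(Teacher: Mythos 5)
Your proposal is correct and follows essentially the same approach as the paper: you define $g(n) := b_{m(n)} - n$ with $m(n)$ the least index with $n \leq a_{m(n)}$ (the paper calls it $k(n)$), then verify the containment directly and the iteration identity by the same short induction. The only difference is that you spell out the induction step in more detail, which is a reasonable elaboration of what the paper leaves as ``an easy induction''.
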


\begin{proof}
    Define $g(n):=b_{k(n)}-n$ where
    \[
    k(n):=\min\{i \, \mid \, n\leq a_i\}
    \]
    (this is well-defined, since $a_0<a_1<\ldots$ and $n\leq a_{k(n)}<b_{k(n)}$). By an easy induction we can show that $\tilde g^{(i)}(0)=b_{i-1}$, where in particular we have $g(b_{i-1})=b_{k(b_{i-1})}-b_{i-1}=b_i-b_{i-1}$. Now for any $n\in\NN$ we have $n+g(n)=b_{k(n)}$ and since $n\leq a_{k(n)}$ it follows that $[a_{k(n)};b_{k(n)}]\subseteq [n;n+g(n)]$.
\end{proof}

As a result, we see that a sequence $\seq{x_n}$ having a learnable rate of metastability, in the sense that for some function $\phi(\varepsilon)$:
\begin{equation}
\label{eqn:metastable:normal}
\exists n\leq \tilde g^{(\phi(\varepsilon))}(0)\, \forall i,j\in [n;n+g(n)](|x_i-x_j|<\varepsilon)
\end{equation}
 is equivalent to the more natural property that for any $a_0<b_0\leq a_1<b_1\leq \ldots$
\begin{equation}
\label{eqn:metastable:learnable}
\exists n\leq \phi(\varepsilon)\, \forall i,j\in [a_n;b_n](|x_i-x_j|<\varepsilon)
\end{equation}
To see that (\ref{eqn:metastable:learnable}) implies (\ref{eqn:metastable:normal}) we just define $a_n:=\tilde g^{(n)}(0)$ and $b_n:=\tilde g^{(n+1)}(0)$ for all $n\leq \phi(\varepsilon)$, and some arbitrary increasing sequence from that point. Then if (\ref{eqn:metastable:normal}) is false, we have $a_n<b_n$ for all $n\leq \phi(e)$ and (\ref{eqn:metastable:learnable}) must also be false. Conversely, from $a_0<b_0\leq a_1<b_1\leq \ldots$ we define $g$ as in Lemma \ref{res:sequence:to:function}, and then if (\ref{eqn:metastable:learnable}) is false then by Lemma \ref{res:sequence:to:function}, for any $n\leq  \tilde g^{(\phi(\varepsilon))}(0)=b_{\phi(\varepsilon)-1}\leq a_{\phi(\varepsilon)}$ we have $[a_m;b_m]\subseteq [n;n+g(n)]$ for some $m\leq \phi(\varepsilon)$, and thus (\ref{eqn:metastable:normal}) is also false. Observe that $\phi(\varepsilon)$ satisfying (\ref{eqn:metastable:learnable}) is also a bound for $\flucinf{\varepsilon}{x_n}$. Therefore, the above and the discussion just before Section \ref{sec:deterministic:crossings} demonstrates the equivalence between a bound on the $\varepsilon$-fluctations and the exponent of a learnable rate of metastability.

Similar equivalences will be presented in the stochastic setting, where they will inspire several of our main quantitative notions of stochastic convergence.

\section{Quantitative almost sure statements}
\label{sec:finitizing}

Before presenting our main results, we outline a general approach to providing quantitative analogues to probabilistic statements, including the property that a stochastic process converges almost surely. Generally, this involves pulling quantifiers out of the probability function and sometimes shuffling the quantifiers to arrive at `finitisations' of the original properties. This process of finitisation plays an important role in various fields of analysis: It is discussed as a general concept by Tao in \cite{tao:07:softanalysis} where several examples of applications are given, and finitary, or \emph{metastable} almost sure convergence statements are already explored from a logical perspective in \cite{avigad-dean-rute:12:dominated}.  

Let us fix some probability space $(\Omega,\mathcal{F},\PP)$. We say that a logical formula $\varphi(x_1,\ldots,x_n)$ with parameters $x_1,\ldots,x_n$ is \emph{measurable} when for any parameters we can regard the formula as a measurable event i.e. $\varphi(x_1,\ldots,x_n)\in \mathcal{F}$. If $\varphi(n)$ is measurable for all $n\in\NN$, we define $\exists n\, \varphi(n),\forall n\, \varphi(n)\in \mathcal{F}$ in the expected way:
\begin{equation*}
\exists n\, \varphi(n):=\bigcup_{n\in\NN}\varphi(n) \ \ \ \mbox{and} \ \ \ \forall n\, \varphi(n):=\bigcap_{n\in\NN}\varphi(n).
\end{equation*}
We write $A^c$ for the complement of an event $A$. The following straightforward facts will be used repeatedly:
\begin{lemma}
\label{res:bring:out:quantifiers}
Let $p\in [0,1]$ and suppose that $\varphi(n)\supseteq\varphi(n+1)$ for all $n\in\NN$. Then
\begin{enumerate}[(i)]

	\item $\PP(\forall n\, \varphi(n))\geq p\iff \forall n\, (\PP(\varphi(n))\geq p)$.\smallskip
	
	\item $\PP(\forall n\, \varphi(n))\leq p\iff \forall \lambda>0\, \exists n\, (\PP(\varphi(n))<p+\lambda)$.

\end{enumerate}
On the other hand, if $\varphi(n)\subseteq\varphi(n+1)$ for all $n\in\NN$ then

\begin{enumerate}[(i)]

	\item[(iii)] $\PP(\exists n\, \varphi(n))\leq p\iff \forall n\, (\PP(\varphi(n))\leq p)$.\smallskip
	
	\item[(iv)] $\PP(\exists n\, \varphi(n))\geq p\iff \forall \lambda>0\, \exists n\, (\PP(\varphi(n))> p-\lambda)$.

\end{enumerate}\end{lemma}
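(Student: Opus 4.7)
The plan is to prove each of the four equivalences directly from two standard measure-theoretic facts: monotonicity of $\PP$ (if $A \subseteq B$ then $\PP(A) \leq \PP(B)$), and continuity of $\PP$ along monotone sequences of events. More specifically, for a decreasing sequence $\varphi(n) \supseteq \varphi(n+1)$ one has $\PP(\bigcap_n \varphi(n)) = \lim_n \PP(\varphi(n))$ (continuity from above, which applies since $\PP(\varphi(0)) \leq 1 < \infty$), and for an increasing sequence $\varphi(n) \subseteq \varphi(n+1)$ one has $\PP(\bigcup_n \varphi(n)) = \lim_n \PP(\varphi(n))$ (continuity from below). Observe also that in both cases the sequence $n \mapsto \PP(\varphi(n))$ is itself monotone (decreasing or increasing respectively), so the limit is also an infimum or supremum.

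For parts (i) and (iii), the forward directions are immediate from monotonicity, since $\bigcap_n \varphi(n) \subseteq \varphi(n)$ and $\varphi(n) \subseteq \bigcup_n \varphi(n)$ respectively hold for every $n$. The backward directions follow at once from the two continuity principles stated above: if $\PP(\varphi(n)) \geq p$ for every $n$ in the decreasing case, then $\PP(\bigcap_n \varphi(n)) = \lim_n \PP(\varphi(n)) \geq p$, and symmetrically for the increasing case.

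For parts (ii) and (iv), I would argue the backward directions by contrapositive. For (ii), supposing that $\PP(\bigcap_n \varphi(n)) > p$, choose $\lambda := \PP(\bigcap_n \varphi(n)) - p > 0$; then for every $n$, monotonicity gives $\PP(\varphi(n)) \geq \PP(\bigcap_n \varphi(n)) = p + \lambda$, contradicting the hypothesis. Similarly for (iv), if $\PP(\bigcup_n \varphi(n)) < p$, pick $\lambda := p - \PP(\bigcup_n \varphi(n)) > 0$ and use $\PP(\varphi(n)) \leq \PP(\bigcup_n \varphi(n)) = p - \lambda$. For the forward directions, $\PP(\bigcap_n \varphi(n)) \leq p$ combined with continuity from above gives $\inf_n \PP(\varphi(n)) \leq p < p + \lambda$ for any $\lambda > 0$, yielding the required $n$; the forward direction of (iv) is analogous using continuity from below and the resulting supremum.

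There is no real obstacle here — the lemma is essentially a bookkeeping exercise that records when quantifiers over $\NN$ can be pulled outside of $\PP$. The only subtlety worth flagging is the asymmetry between the two forms: parts (i) and (iii) hold with equality of bounds because $\PP$ preserves the relevant monotone limit strictly, while parts (ii) and (iv) unavoidably require the extra parameter $\lambda > 0$ since in general $\inf_n \PP(\varphi(n))$ (respectively $\sup_n \PP(\varphi(n))$) need not be attained. This mismatch is precisely what motivates the finitary, quantitative reformulations of almost sure convergence pursued in the sections to follow.
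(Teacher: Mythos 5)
Your proof is correct and rests on the same two ingredients as the paper's: monotonicity of $\PP$ and continuity of measure along monotone sequences of events. The only structural difference is cosmetic — you prove (iii) and (iv) directly by symmetry with the decreasing case, whereas the paper derives them from (i) and (ii) by passing to complements, and you phrase the backward directions of (ii) and (iv) as a contrapositive rather than a direct limit argument; both are routine rearrangements of the same underlying facts.
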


\begin{proof}
Parts (i) and (ii) follow directly from the fact that $\seq{\PP(\varphi(n))}$ is a decreasing sequence of reals with
\begin{equation*}
\lim_{n\to\infty}\PP(\varphi(n))=\PP(\forall n\, \varphi(n)).
\end{equation*}
If $\PP(\forall n\, \varphi(n))\geq p$ then $\PP(\varphi(n))\geq \PP(\forall n\, \varphi(n))\geq p$ for any $n\in\NN$, and conversely if $\PP(\varphi(n))\geq p$ for all $n\in\NN$, we must have $\PP(\forall n\, \varphi(n))=\lim_{n\to\infty}\PP(\varphi(n))\geq p$. Similarly, for (ii), if $\PP(\forall n\, \varphi(n))=\lim_{n\to\infty}\PP(\varphi(n))\leq p$ then in particular, for any $\lambda>0$ we have $\PP(\varphi(n))<p+\lambda$ for some $n\in\NN$, and conversely if for any $\lambda$ we have $\PP(\varphi(n))<p+\lambda$ for some $n\in\NN$, since $\seq{\PP(\varphi(n))}$ is decreasing we have $\PP(\forall n\, \varphi(n))=\lim_{n\to\infty}\PP(\varphi(n))< p+\lambda$ for all $\lambda>0$, and thus $\PP(\forall n\, \varphi(n))\leq p$. Parts (iii) and (iv) follow by negating both sides of the implications and applying (i) and (ii) to the complement of $\varphi(n)$.
\end{proof}

\subsection{Pointwise and uniform metastability}
\label{sec:finitizing:metastability}

We now introduce an abstract notion of what it means to `finitise' an almost sure statement, which will later be applied concretely to convergence statements (where such finitisations already feature in \cite{avigad-dean-rute:12:dominated}) but which is helpful to have in a more general form. In the remainder of this subsection, we let $A(n,m)$ be some measurable formula on natural numbers $n,m\in\NN$, with the following monotonicity property: $n\leq n'$ and $m'\leq m$ implies that $A(n',m')\subseteq A(n,m)$.

\begin{theorem}
    \label{res:equivalent:metastabe}
    The following statements are equivalent:
    \begin{enumerate}[(a)]
        \item Almost surely, there exists $n$ such that $A(n,m)$ does not hold for any $m\geq n$. \smallskip

        \item For any $\lambda>0$ and $g:\NN\to\NN$ there exists $n$ such that 
        \[
        \PP(A(n,n+g(n)))<\lambda
        \]

        \item For any $\lambda>0$ and $g:\NN\to\NN$ there exists $N$ such that
        \[
        \PP(\forall n\leq N\, A(n,n+g(n)))<\lambda
        \]     
    \end{enumerate}
\end{theorem}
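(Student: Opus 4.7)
The plan is to prove the cycle $(a)\Rightarrow(b)\Rightarrow(c)\Rightarrow(a)$, with the main work concentrated in the last implication.

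For $(a)\Rightarrow(b)$ I would exploit the monotonicity hypothesis. Setting $E_n := \bigcup_{m\geq n} A(n,m) = \{\exists m\geq n\, A(n,m)\}$, the inclusion $A(n+1,m)\subseteq A(n,m)$ yields $E_{n+1}\subseteq E_n$, so by continuity from above $\PP(E_n)$ decreases to $\PP(\bigcap_n E_n) = \PP(\{\forall n\, \exists m\geq n\, A(n,m)\}) = 0$, the last equality being the negation of (a). Given $\lambda>0$ and $g$, pick $n$ with $\PP(E_n)<\lambda$; then $A(n, n+g(n))\subseteq E_n$ immediately gives (b). The implication $(b)\Rightarrow(c)$ is essentially free, since $\bigcap_{n'\leq N} A(n', n'+g(n'))\subseteq A(N, N+g(N))$, so choosing $N$ to be the $n$ supplied by (b) does the job.

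For $(c)\Rightarrow(a)$ I would first invoke Lemma \ref{res:bring:out:quantifiers}(ii) applied to the decreasing family $\varphi(N) := \forall n\leq N\, A(n, n+g(n))$, which reformulates (c) as the cleaner assertion that $\PP(\forall n\, A(n, n+g(n)))=0$ for every $g:\NN\to\NN$. I would then argue by contrapositive: suppose $\PP(E)>0$, where $E := \{\forall n\, \exists m\geq n\, A(n,m)\}$. Since $E\subseteq \bigcup_m A(n,m)$ for each $n$, continuity of $\PP$ from below (together with monotonicity of $A(n,\cdot)$ in $m$) lets me choose $M_n\geq n$ with $\PP(E\setminus A(n, M_n))<2^{-n-2}\PP(E)$; I then set $g(n) := M_n-n$. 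A union bound yields
\[
\PP\bigl(E\setminus \bigcap_n A(n, n+g(n))\bigr)\leq \sum_n \PP(E\setminus A(n, n+g(n)))\leq \PP(E)/2,
\]
so $\PP(\forall n\, A(n,n+g(n)))\geq \PP(E)/2>0$, contradicting the reformulation of (c).

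The main obstacle is this last step: one must extract a single deterministic function $g$ from the event $E$, on which a priori the witness $m\geq n$ for $A(n,m)$ could depend on the sample point $\omega$. The resolution is a diagonal construction driven by continuity from below and geometric decay of exceptional sets, and is essentially the only nontrivial probabilistic ingredient in the proof; the rest is a careful bookkeeping of the monotonicity of $A$ in each argument.
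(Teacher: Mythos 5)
Your proof is correct, and the key implication $(c)\Rightarrow(a)$ is handled by a genuinely different argument from the paper's. The paper proves $(a)\Leftrightarrow(b)$ by repeated applications of Lemma \ref{res:bring:out:quantifiers}, notes $(b)\Rightarrow(c)$ is trivial, and then closes the cycle by proving $(c)\Rightarrow(b)$ using the quantitative Egorov theorem of Avigad, Dean and Rute (Theorem 2.2 of \cite{avigad-dean-rute:12:dominated}) as a black box: one checks that for any $F$ the premise of that theorem holds using monotonicity of $A$ and the hypothesis $(c)$, and the conclusion is exactly $(b)$. By contrast, you prove $(c)\Rightarrow(a)$ directly, by contrapositive, via a self-contained diagonalisation: assuming $\PP(E)>0$ for $E=\{\forall n\,\exists m\geq n\,A(n,m)\}$, you use continuity from below and the monotonicity of $A(n,\cdot)$ to pick $M_n\geq n$ with $\PP(E\setminus A(n,M_n))<2^{-n-2}\PP(E)$, set $g(n):=M_n-n$, and conclude by a union bound that $\PP(\forall n\,A(n,n+g(n)))\geq\PP(E)/2>0$, contradicting the reformulation of $(c)$ via Lemma \ref{res:bring:out:quantifiers}(ii). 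Your argument is more elementary and self-contained -- it essentially re-proves the qualitative content of Egorov's theorem in this specific setting rather than importing it -- whereas the paper's route is arguably more in keeping with its quantitative programme, since if one unpacked the quantitative Egorov theorem one would obtain an explicit bound on the $n$ in $(b)$ in terms of a bound for $(c)$, which your nonconstructive choice of $M_n$ does not deliver. Your treatment of $(a)\Rightarrow(b)$ via continuity from above for the decreasing events $E_n=\bigcup_{m\geq n}A(n,m)$ is in the same spirit as the paper's use of Lemma \ref{res:bring:out:quantifiers}, and your $(b)\Rightarrow(c)$ matches the paper exactly.
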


\begin{proof}
    The equivalence of (a) and (b) follows through repeated applications of Lemma \ref{res:bring:out:quantifiers}. Specifically, (a) can be reformulated as
    \[
    \PP(\exists n\, \forall m\, A(n,n+m)^c)=1
    \]
    which, using the monotonicity property of $A$ and (iv) applied to $\varphi(n):=\forall m\, A(n,n+m)^c$ (and replacing $>1-\lambda$ with $\geq 1-\lambda$), is equivalent to
    \[
    \forall \lambda>0\, \exists n\, \PP(\forall m\, A(n,n+m)^c)\geq 1-\lambda
    \]
    Now using (i) applied to $\varphi(m):=A(n,n+m)^c$, this is equivalent to
    \[
     \forall \lambda>0\, \exists n\, \forall m\, \PP(A(n,n+m)^c)\geq 1-\lambda   
    \]
		which is equivalent to
		\[
		     \forall \lambda>0\, \exists n\, \forall m\, \PP(A(n,n+m))<\lambda   
		\]
    Now, if the above is false, that is equivalent to the statement that there exists some $\lambda>0$ and $g:\NN\to\NN$ such that
    \[
    \forall n\, \PP(A(n,n+g(n)))\geq \lambda
    \]
    which is then the negation of (b). That (b) implies (c) is clear. The converse is more difficult, and we use the abstract quantitative version of Egorov's theorem formulated as Theorem 2.2 of \cite{avigad-dean-rute:12:dominated}, which is in turn inspired by a construction of Tao \cite[Theorem A.2]{tao:08:ergodic}. Theorem 2.2 of \cite{avigad-dean-rute:12:dominated} implies the following simpler statement: for any sequence of events $\seq{B_n}$ and any $\lambda>\lambda'>0$, if for any $F:\NN\to\NN$ there exists an $N$ such that
    \[
    \PP\left(\forall n\leq N\, \exists k\in [n;F(n)]\, B_k\right)<\lambda'
    \]
    then $\PP(B_n)<\lambda$ for some $n$. To prove (b) from this statement, fixing $\lambda$ and $g$ and defining
    \[
    B^g_n:=A(n,n+g(n))
    \]
    it suffices to show that for some $\lambda'<\lambda$, for all $F$ there exists $N$ such that
    \[
    \PP\left(\forall n\leq N\, \exists k\in [n;F(n)]\, B_k^g\right)<\lambda'.
    \]
    But using monotonicity of $A$ by which we have
		\[
		\begin{aligned}
		\exists k\in [n;F(n)]\, B_k^g&=\exists k\in [n;F(n)]\, A(k,k+g(k))\\
		&\subseteq \exists k\in [n;F(n)]\, A(n,k+g(k))\\
		&\subseteq A(n,n+F^g(n))
		\end{aligned}
		\]
		for $F^g(n):=\max\{k-n+g(k)\, \mid \, k\in [n;F(n)]\}$, it suffices to show that
     \[
    \PP\left(\forall n\leq N\, A(n,n+F^g(n))\right)<\lambda'
    \]   
    for any $F$, and the existence of such an $N$ then follows from (c).
\end{proof}

We now arrive at the following definitions, each of which gives a general quantitative meaning to the statement $\exists n\, \forall m\geq n\, A(n,m)^c$ almost surely.
\begin{definition}
    \label{def:abstract:rates}
    Let $B:=\exists n\, \forall m\geq n\, A(n,m)^c$. Then
    \begin{enumerate}[(a)]

        \item A (direct) rate for $B$ is any function $f:(0,1)\to\NN$ satisfying
        \[
        \PP(\exists m\geq f(\lambda)\, A(f(\lambda),m))<\lambda
        \]
        for all $\lambda>0$.\smallskip

        \item A uniform metastable rate for $B$ is any functional $\Phi:(0,1)\times (\NN\to\NN)\to \NN$ satisfying
        \[
        \exists n\leq \Phi(\lambda,g)\, \PP(A(n,n+g(n)))<\lambda
        \]
        for all $\lambda>0$ and $g:\NN\to\NN$.\smallskip

        \item A pointwise metastable rate for $B$ is any function $\Phi:(0,1)\times (\NN\to\NN)\to \NN$ satisfying
        \[
        \PP(\forall n\leq \Phi(\lambda,g)\, A(n,n+g(n)))<\lambda
        \]
        for all $\lambda>0$ and $g:\NN\to\NN$.        
        
    \end{enumerate}
\end{definition}
In general, and certainly for the specific case of almost sure convergence (cf. Remark \ref{rem:specker}), it is not possible to find \emph{direct} rates that are computable. On the other hand, computable pointwise and uniform rates are generally possible. Any uniform rate is automatically a pointwise rate, while the construction given in \cite{avigad-dean-rute:12:dominated} gives us a general method of converting pointwise rates to uniform rates, though with a considerable blowup in complexity (cf. Remark \ref{rem:blowup}).  So, even in situations where we do have pointwise rates, it is not necessarily obvious how to find uniform rates of low complexity.

\subsection{Pointwise and uniform learnability}
\label{sec:finitizing:learnability}

We now consider the special case where rates of pointwise and uniform metastability have learnable forms, as already discussed in Section \ref{sec:deterministic:learnability}. In both cases, we demonstrate that the existence of learnable rates can be formulated in a simpler way (without reference to the function $g$) and that there are very natural probabilistic situations where these conditions arise. In the following result, we assume that $A(m,n)$ is monotone as in Theorem \ref{res:equivalent:metastabe} above, and also that $A(m,n)=\emptyset$ unless $m<n$.

\begin{definition}
    \label{def:learnability}
    Let $B:=\exists n\, \forall m\geq n\, A(n,m)^c$. A function $\phi:(0,1)\to\NN$ is 
    \begin{enumerate}[(a)]

        \item a uniform learnable rate for $B$ if 
        \[
        \exists n\leq \phi(\lambda)\, \PP(A(a_n,b_n))<\lambda
        \]
        for any $a_0<b_0\leq a_1<b_1\leq \ldots$

        \item a pointwise learnable rate for $B$ if
        \[
        \PP(\forall n\leq \phi(\lambda)\, A(a_n,b_n))<\lambda
        \]
        for any $a_0<b_0\leq a_1<b_1\leq \ldots$

    \end{enumerate}
\end{definition}

\begin{lemma}
    \label{res:learnable:equivalences} Let $B:=\exists n\, \forall m\geq n\, A(n,m)^c$ and $\phi:(0,1)\to\NN$ be some function. Then $\Phi(\lambda,g)=\tilde g^{(\phi(\lambda))}(0)$ for $\tilde g(n):=n+g(n)$ is a (uniform) pointwise metastable rate for $B$ iff $\phi(\lambda)$ a (uniform) pointwise learnable rate for $B$.
\end{lemma}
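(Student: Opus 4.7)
My plan is to prove both the uniform and pointwise equivalences in parallel, via direct event inclusions, relying on Lemma \ref{res:sequence:to:function} to translate between functions $g$ and strictly interleaved sequences $a_0<b_0\leq a_1<b_1\leq\ldots$. The structure closely mirrors the argument for the deterministic case given at the end of Section \ref{sec:deterministic:learnability}, but recast probabilistically using the monotonicity hypothesis on $A$.

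For the forward direction (metastable $\Rightarrow$ learnable), given such a sequence $a_0<b_0\leq a_1<b_1\leq\ldots$, I invoke Lemma \ref{res:sequence:to:function} to construct $g$ satisfying $\tilde g^{(i)}(0)=b_{i-1}$ for $i\geq 1$; in particular $\Phi(\lambda,g)\leq a_{\phi(\lambda)}$. For each $n\leq \Phi(\lambda,g)$, writing $m:=k(n)\leq\phi(\lambda)$ for the minimal index with $n\leq a_m$, the lemma yields $[a_m;b_m]\subseteq[n;n+g(n)]$, and the monotonicity of $A$ gives the event inclusion $A(a_m,b_m)\subseteq A(n,n+g(n))$. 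In the uniform case this converts a witness $n_0$ for the metastable rate into the witness $k(n_0)\leq\phi(\lambda)$ for the learnable rate. In the pointwise case the same inclusion lifts to
\[
\{\forall n\leq\phi(\lambda)\, A(a_n,b_n)\}\subseteq\{\forall n\leq\Phi(\lambda,g)\, A(n,n+g(n))\},
\]
and monotonicity of probability then gives the desired learnable bound.

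For the reverse direction, given $\lambda$ and $g$, I set $a_n:=\tilde g^{(n)}(0)$ and $b_n:=\tilde g^{(n+1)}(0)=a_n+g(a_n)$. Provided $g(\tilde g^{(n)}(0))>0$ for all $n\leq\phi(\lambda)$, this forms a valid strictly interleaved sequence (extended arbitrarily thereafter), and since $a_n\leq\Phi(\lambda,g)$ and $A(a_n,b_n)=A(a_n,a_n+g(a_n))$, the metastable rate inequality (uniform or pointwise) is an immediate rephrasing of the learnable rate inequality.

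The one real subtlety, which I expect to be the only notable obstacle, is the degenerate case where $g(\tilde g^{(n^*)}(0))=0$ for some $n^*\leq\phi(\lambda)$, since then $a_{n^*}=b_{n^*}$ and the sequence fails the strict interleaving requirement. I dispose of this by noting that the orbit of $\tilde g$ stabilises at $\tilde g^{(n^*)}(0)=\Phi(\lambda,g)$, so the standing convention $A(m,m)=\emptyset$ makes the metastable conclusion trivially hold for the single witness $m:=\Phi(\lambda,g)$ (uniform case) and forces $\{\forall m\leq\Phi(\lambda,g)\, A(m,m+g(m))\}=\emptyset$ (pointwise case), in both cases yielding probability $0<\lambda$ at no extra cost.
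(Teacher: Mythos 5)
Your proof is correct and follows essentially the same route as the paper: Lemma \ref{res:sequence:to:function} for passing from a sequence $a_0<b_0\leq a_1<b_1\leq\ldots$ to $g$, and the explicit choice $a_n:=\tilde g^{(n)}(0)$, $b_n:=\tilde g^{(n+1)}(0)$ for the converse, with the same event inclusions $A(a_m,b_m)\subseteq A(n,n+g(n))$ driving the argument in both the uniform and pointwise cases. The only difference is cosmetic: you argue directly while the paper argues by contraposition (so the degenerate case $a_{n^*}=b_{n^*}$ is dispatched there by noting that $\PP(A(a_{n^*},b_{n^*}))\geq\lambda$ forces $a_{n^*}<b_{n^*}$, where your direct framing correctly handles it via $A(m,m)=\emptyset$).
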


\begin{proof}
    For the uniform case, in one direction we define $a_n:=\tilde g^{(n)}(0)$ and $b_n:=\tilde g^{(n+1)}(0)$. Then if $\tilde g^{(\phi(\lambda))}(0)$ is not a uniform metastable rate then
    \[
    \forall n\leq \phi(\lambda)\, \PP(A(a_n,b_n))\geq \lambda
    \]
    and since then we would have $a_n<b_n$ for all $n\leq \phi(\lambda)$ then $\phi(\lambda)$ is not a uniform learnable rate. In the other direction, we define $g$ in terms of $a_0<b_0\leq a_1<b_1\leq \ldots$ as in Lemma \ref{res:sequence:to:function}, and if $\phi(\lambda)$ is not a uniform learnable rate then since for any $n\leq \tilde g^{(\phi(\lambda))}(0)=b_{\phi(\lambda)-1}\leq a_{\phi(\lambda)}$ we have $n\leq a_m$ and $b_m=n+g(n)$ for some $m\leq \phi(\lambda)$, and since $A(a_m,b_m)\subseteq A(n,n+g(n))$ it follows that $\PP(A(n,n+g(n)))\geq \lambda$ for all $n\leq \tilde g^{(\phi(\lambda))}(0)$.

    The pointwise case is entirely analogous, with a few additional details needed to run the argument pointwise. For the first direction, defining $a_n,b_n$ in the same way, we note that if $\omega\in  A(n,n+g(n))$ for all $n\leq \tilde g^{(\phi(\lambda))}(0)$, then in particular $\omega\in A(a_n,b_n)$ for $n\leq \phi(\lambda)$, and so if $g^{(\phi(\lambda))}(0)$ is not a pointwise metastable rate then
		\[
		\PP(\forall n\leq \phi(\lambda)\, A(a_n,b_n))\geq \PP(\forall n\leq \tilde g^{(\phi(\lambda))}(0)\, A(n,n+g(n)))\geq \lambda
		\]
		where we must also note that for any $n\leq \phi(\lambda)$,
		\[
		\PP(A(a_n,b_n))\geq \PP(\forall n\leq \phi(\lambda)\, A(a_n,b_n))\geq \lambda
		\]
		and thus $a_n<b_n$. In the other direction, we note that that if $\omega\in A(a_n,b_n)$ for all $n\leq \phi(\lambda)$, defining $g$ in the same way as the uniform case, for any $n\leq \tilde g^{(\phi(\lambda))}(0)$ there exists $m\leq \phi(\lambda)$ such that $\omega\in A(a_m,b_m)\subseteq A(n,n+g(n))$, and so $\omega\in A(n,n+g(n))$ for all $n\leq \tilde g^{(\phi(\lambda))}(0)$, and thus if $\phi(\lambda)$ is not a pointwise learnable rate then
    \[
    \PP(\forall n\leq \tilde g^{(\phi(\lambda))}(0)\, A(n,n+g(n)))\geq \PP(\forall n\leq \phi(\lambda)\, A(a_n,b_n))\geq \lambda
    \]
    from which we obtain our contradiction.
\end{proof}

The concepts of pointwise and uniform learnability above, which in turn correspond to rates of uniform and pointwise metastability with a particularly clean form, are, in turn, fundamentally related to an abstract notion of `fluctuations'. Define the random variable $J_{N,A}$ to be the maximal $k\in\NN$ such that there exists
\[
a_0<b_0\leq a_1<b_1\leq \ldots \leq a_{k-1}<b_{k-1}< N \mbox{ with }A(a_n,b_n)\mbox{ for all }n\in [0;k-1]
\]
and define
\[
J_{\infty,A}:=\lim_{N\to\infty} J_{N,A}
\]

\begin{theorem}
    \label{res:abstract:fluctuations:to:rates}
    Let $B=\exists n\, \forall m\geq n\, A(n,m)^c$.
    \begin{enumerate}[(i)]

        \item If $\phi:(0,1)\to\NN$ is a rate of convergence for
        \[
        \lim_{n\to\infty}\PP\left(J_{\infty,A}\geq n\right)=0
        \]
        then it is also a learnable pointwise rate for $B$, and thus $\Phi(\lambda,g):=\tilde g^{(\phi(\lambda))}(0)$ is a pointwise metastable rate for $B$.\smallskip
        \item If $K>0$ is such that
        \[
        \EE[J_{\infty,A}]< K
        \]
        then $\phi(\lambda):= \lceil K/\lambda\rceil$ is a uniform learnable rate for $B$, and thus $\Phi(\lambda,g):={\tilde g^{(\lceil K/\lambda\rceil)}(0)}$ is a uniform metastable rate for $B$. 
        
    \end{enumerate}
\end{theorem}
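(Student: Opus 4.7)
The plan is to prove both parts by observing the following pathwise inequality: along any fixed sequence $a_0<b_0\leq a_1<b_1\leq\ldots$, the number of indices $n$ for which $A(a_n,b_n)$ holds is a lower bound for $J_{\infty,A}$, since each such index contributes a valid pair in the definition of $J_{N,A}$ for $N$ large enough. Once this is in place, part (i) is a pathwise (Markov-free) argument and part (ii) is a first-moment argument, and both rates of metastability follow by immediate appeal to Lemma \ref{res:learnable:equivalences}.

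For part (i), fix $\lambda>0$ and any increasing sequence $a_0<b_0\leq a_1<b_1\leq\ldots$. If $A(a_n,b_n)$ holds for every $n\leq\phi(\lambda)$, then by the pathwise observation above we get $J_{\infty,A}\geq\phi(\lambda)+1$, so
\[
\PP\left(\forall n\leq\phi(\lambda)\, A(a_n,b_n)\right)\leq \PP\left(J_{\infty,A}\geq \phi(\lambda)+1\right)\leq \PP\left(J_{\infty,A}\geq\phi(\lambda)\right)<\lambda
\]
by the assumption that $\phi$ is a rate of convergence for $\PP(J_{\infty,A}\geq n)\to 0$. This gives the pointwise learnable rate, and Lemma \ref{res:learnable:equivalences} then converts it into the claimed pointwise metastable rate $\tilde g^{(\phi(\lambda))}(0)$.

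For part (ii), I argue by contradiction. Suppose $\phi(\lambda):=\lceil K/\lambda\rceil$ is not a uniform learnable rate, so there is some sequence $a_0<b_0\leq a_1<b_1\leq\ldots$ with $\PP(A(a_n,b_n))\geq\lambda$ for every $n\leq\phi(\lambda)$. By the pathwise bound and linearity of expectation,
\[
\EE[J_{\infty,A}]\geq \EE\left[\sum_{n=0}^{\phi(\lambda)}\mathbf{1}_{A(a_n,b_n)}\right]=\sum_{n=0}^{\phi(\lambda)}\PP(A(a_n,b_n))\geq (\phi(\lambda)+1)\lambda > K,
\]
contradicting $\EE[J_{\infty,A}]<K$. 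Hence $\phi(\lambda)=\lceil K/\lambda\rceil$ is a uniform learnable rate, and Lemma \ref{res:learnable:equivalences} supplies the uniform metastable rate.

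The only subtle point to verify carefully is the pathwise inequality $\sum_{n=0}^{N}\mathbf{1}_{A(a_n,b_n)}\leq J_{\infty,A}$: on each $\omega$, the indices $n$ at which $A(a_n,b_n)$ occurs inherit the ordering $a_{n_1}<b_{n_1}\leq a_{n_2}<b_{n_2}\leq\ldots$ from the underlying sequence, so they form an admissible block in the definition of $J_{N,A}$ for any $N>\max b_{n_k}$. This is the main (and essentially the only) content of the argument; the rest is bookkeeping plus the equivalences already established in Lemma \ref{res:learnable:equivalences}.
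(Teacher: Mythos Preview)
Your proof is correct and follows essentially the same approach as the paper: both parts rest on the pathwise inequality $\sum_{n\leq N}I_{A(a_n,b_n)}\leq J_{\infty,A}$, with part (i) using it as an event inclusion and part (ii) taking expectations and arguing by contradiction. Your presentation is in fact slightly more careful, making the pathwise bound explicit and noting the sharper inclusion $\{J_{\infty,A}\geq\phi(\lambda)+1\}$ in part (i), whereas the paper writes the (still valid) weaker inclusion $\{J_{\infty,A}\geq\phi(\lambda)\}$.
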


\begin{proof}
    Fixing $a_0<b_1\leq a_1<b_1\leq \ldots$, part (i) follows by noting that 
    \[
    \{\forall n\leq \phi(\lambda)\, A(a_n,b_n)\}\subseteq \{J_{\infty,A}\geq \phi(\lambda)\}
    \]
    while for (ii), we observe that for any $N\in\NN$:
    \[
    \sum_{i=0}^{N}\PP(A(a_n,b_n))=\sum_{i=0}^{N}\EE[I_{A(a_n,b_n)}]=\EE\left(\sum_{i=0}^{N}I_{A(a_n,b_n)}\right)\leq \EE[J_{\infty,A}]<K
    \]
    and therefore if $\PP(A(a_n,b_n))\geq \lambda$ for all $n\leq \lceil K/\lambda\rceil$, we would have
    \[
    K<\lambda\left(\Bigl\lceil \frac{K}{\lambda}\Bigr\rceil+1\right)\leq \sum_{i=0}^{\lceil K/\lambda\rceil}\PP(A(a_n,b_n))< K
    \]
    a contradiction.
\end{proof}

Theorem \ref{res:abstract:fluctuations:to:rates} formulates, in a completely abstract way, an instance of a deeper correspondence which, as we will see, plays an important role in the quantitative analysis of stochastic processes that follow, namely:
\[
\begin{aligned}
    \lim_{n\to\infty}\PP\left(Z\geq n\right)=0&\mapsto \mbox{pointwise local stability}\\
    \EE[Z]<\infty&\mapsto \mbox{uniform local stability}
\end{aligned}
\]
where $Z$ is some random variable representing counterexamples to stability in a suitable sense. In addition to fluctuations, we will also see this relationship with the crossings of stochastic processes over fixed intervals, though here, the route to pointwise and uniform convergence rates is a little more involved.

\section{Proof-theoretic rates and moduli}
\label{sec:moduli}

In this section, we apply the ideas and manipulations of Section \ref{sec:finitizing} to arrive at a number of quantitative definitions concerning stochastic variants of the main concepts discussed in Section \ref{sec:deterministic}, specifically a stochastic process: 
\begin{itemize}
\item being uniformly bounded;
\item having finite fluctuations or crossings;
\item being Cauchy convergent.
\end{itemize}
Our definitions are based on functions that reflect the logical structure of the underlying statements, which is often at odds with the more traditional formulations. For example, we capture the uniform boundedness of a stochastic process with a function $\phi$ satisfying
\[
\PP\left(\sup_{n\in\NN}|X_n|\geq\phi(\lambda)\right)<\lambda \ \ \ \mbox{for all $\lambda>0$}
\]
rather than a function $f$ satisfying
\[
\PP\left(\sup_{n\in\NN}|X_n|\geq m\right)<f(m) \ \ \ \mbox{for all $m\in\NN$}.
\]
It is precisely because they represent the underlying quantifier structure that these moduli are better suited to formulating the computational structure of proofs than traditional rates, which implicitly involve additional assumptions, such as \[\lim_{m\to\infty}f(m)= 0\] in the example above. In any case, we can always convert our moduli to traditional rates to facilitate comparison with known results, which we do in Section \ref{sec:pointwise} below.

\subsection{Uniform boundedness}
\label{sec:moduli:boundedness}

Let $\seq{X_n}$ be a stochastic process. By Lemma \ref{res:bring:out:quantifiers} (iv), the property that $\seq{X_n}$ is almost surely uniformly bounded i.e.
\begin{equation*}
\sup_{n\in\NN} |X_n|<\infty \ \ \ \mbox{almost surely}
\end{equation*}
is equivalent to the statement that for any $\lambda>0$ there exists $N\in\NN$ such that
\begin{equation*}
\PP\left(\sup_{n\in\NN} |X_n| \geq N\right)<\lambda.
\end{equation*}
Uniform boundedness is related to the notion of \emph{tightness}. In particular, it implies that the sequence $\seq{X_n}$ is tight in the sense that for any $\lambda>0$ there exists $N\in\NN$ such that
\begin{equation*}
\PP\left(|X_n| \geq N\right)<\lambda \ \ \ \mbox{for all $n\in\NN$}.
\end{equation*}
Tightness is strictly weaker than almost sure uniform boundedness. In particular, whenever
\begin{equation*}
\sup_{n\in\NN}\EE(|X_n|)<\infty
\end{equation*}
then $\seq{X_n}$ is tight by Markov's inequality but is not necessarily almost surely bounded (see Example \ref{ex:tightness} below). In line with the discussion at the end of Section \ref{sec:finitizing}, uniform boundedness will be needed to obtain results related to pointwise convergence, but this can be relaxed to tightness in the case of uniform convergence. We therefore consider two distinct quantitative notions.
\begin{definition}
\label{def:quantitative:boundedness}
Let $\seq{X_n}$ be a stochastic process.\medskip
\begin{enumerate}[(a)]

	\item Any function $\phi:(0,1)\to \RR$ satisfying
	\begin{equation*}
	\PP\left(\sup_{n\in\NN}|X_n|\geq \phi(\lambda)\right)<\lambda \ \ \ \mbox{for all $\lambda\in (0,1)$}
	\end{equation*}
	is called a \emph{modulus of uniform boundedness} for $\seq{X_n}$.\medskip
	
	\item Any function $\phi:(0,1)\to \RR$ satisfying
	\begin{equation*}
	\PP\left(|X_n|\geq \phi(\lambda)\right)<\lambda \ \ \ \mbox{for all $\lambda\in (0,1)$ and $n\in\NN$}
	\end{equation*}
	is called a \emph{modulus of tightness} for $\seq{X_n}$.\medskip
	
\end{enumerate}
In particular, any modulus of uniform boundedness is also a modulus of tightness for the same stochastic process.
\end{definition} 

By a simple application of Markov's inequality, we obtain the following:

\begin{lemma}
\label{res:markov:tightness}
Suppose that 
\begin{equation*}
\sup_{n\in\NN}\EE(|X_n|)< M
\end{equation*}
for some $M>0$. Then $\seq{X_n}$ is tight with modulus $\phi_M(\lambda):=M/\lambda$.
\end{lemma}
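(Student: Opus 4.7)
The plan is to apply Markov's inequality directly, since this is exactly the standard tool for converting an $L_1$-bound into a tail bound on a single random variable, and the conclusion of the lemma is a uniform such tail bound in $n$. Fixing $\lambda \in (0,1)$ and any $n \in \NN$, I would apply Markov to the non-negative random variable $|X_n|$ at the threshold $\phi_M(\lambda) = M/\lambda$, which gives
\[
\PP(|X_n| \geq M/\lambda) \leq \frac{\EE(|X_n|)}{M/\lambda} = \frac{\lambda \cdot \EE(|X_n|)}{M}.
\]
By hypothesis $\EE(|X_n|) \leq \sup_{k\in\NN} \EE(|X_k|) < M$, so the right-hand side is strictly less than $\lambda$, and this is precisely the condition in Definition \ref{def:quantitative:boundedness}(b) for $\phi_M$ to be a modulus of tightness.

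There is no genuine obstacle here: the lemma is essentially a reformulation of Markov's inequality in the quantitative language introduced in Section \ref{sec:moduli:boundedness}, and its role is conceptual rather than technical, namely to provide a concrete supply of tightness moduli for the subsequent applications to $L_1$-bounded (super)martingales. The one point worth flagging is the handling of strict inequalities: the definition of a modulus of tightness demands $< \lambda$, which is why the hypothesis is phrased as $\sup_n \EE(|X_n|) < M$ rather than $\leq M$. The strict bound on the supremum propagates through the Markov estimate to yield the required strict inequality; if one preferred to assume only $\sup_n \EE(|X_n|) \leq M$, then any constant strictly larger than $M$ (for instance $M + 1$) would still furnish a modulus of the same asymptotic form.
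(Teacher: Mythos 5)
Your proof is correct and matches the paper's intent exactly: the paper introduces the lemma with the remark that it follows ``by a simple application of Markov's inequality'' and gives no further details, which is precisely the argument you spell out. Your observation about the strict inequality in the hypothesis is accurate and a reasonable thing to make explicit.
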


We finish with some very simple examples of concrete moduli:

\begin{example}
\label{ex:tightness}
Define $\seq{X_n}$ by
\begin{equation*}
\begin{aligned}
&X_0=1\\
&X_1=2I_{[0,1/2]}, X_2=2I_{[1/2,1]}\\
&X_3=3I_{[0,1/3]}, X_4=3I_{[1/3,2/3]}, X_5=3I_{[2/3,1]}\\
&\cdots
\end{aligned}
\end{equation*}
Then $\EE(|X_n|)=\EE(X_n)=1$ for all $n\in\NN$, and thus $\seq{X_n}$ is tight with modulus $\phi(\lambda)=1/\lambda$. On the other hand, for any $N\in\NN$ we have
\begin{equation*}
\PP\left(\sup_{n\in\NN }X_n\geq N\right)=1
\end{equation*}
and so $\seq{X_n}$ is almost surely \emph{unbounded}.
\end{example}

\begin{example}
\label{ex:maximal}
If $\seq{X_n}$ is a nonnegative submartingale with $\sup_{n\in\NN}\EE(|X_n|)<M$, then it is both tight and almost surely uniformly bounded, with a modulus $\phi(\lambda)=M/\lambda$ in both cases. The latter follows from Doob's maximal inequality, whereby
\begin{equation*}
\PP\left(\max_{n\leq m} X_n\geq N\right)\leq\frac{\EE(X_m)}{N}<\frac{M}{N}
\end{equation*}
for any $m\in\NN$, and thus by Lemma \ref{res:bring:out:quantifiers} (iii) it follows that
\begin{equation*}
\PP\left(\sup_{n\in\NN}X_n\geq N\right)=\PP\left(\exists m\, \max_{n\leq m} X_n\geq N\right)<\frac{M}{N}
\end{equation*}

\end{example}

\subsection{Fluctuations and crossings}
\label{sec:moduli:crossings}

For a stochastic process $\seq{X_n}$ define 
\begin{equation*}
\fluc{N}{\varepsilon}{X_n}(\omega):=\fluc{N}{\varepsilon}{X_n(\omega)}
\end{equation*}
for each $\omega \in \Omega$, with $\fluc{N}{\varepsilon}{x_n}$ defined as in Section \ref{sec:deterministic}. In other words, $\fluc{N}{\varepsilon}{X_n}$ denotes the number of $\varepsilon$-fluctuations that occur in the initial segment $\{X_0,\ldots,X_{N-1}\}$ of the process. The stochastic analogue of total fluctuations $\flucinf{\varepsilon}{X_n}$, along with those for $[\alpha,\beta]$-crossings $\crs{N}{[\alpha,\beta]}{X_n}$ and $\crsinf{[\alpha,\beta]}{X_n}$, are defined in the same way.

The property that, almost surely, $\seq{X_n}$ has finite $\varepsilon$-fluctuations for each $\varepsilon>0$, is equivalent to
\begin{equation*}
\PP\left(\forall k\, \flucinf{2^{-k}}{X_n}<\infty\right)=1
\end{equation*}
and Lemma \ref{res:bring:out:quantifiers} and monotonicity of $\flucinf{2^{-k}}{X_n}<\infty$ in $k$ this is equivalent to
\begin{equation*}
\forall k\, \left(\PP\left(\flucinf{2^{-k}}{X_n}<\infty\right)=1\right)
\end{equation*}
i.e. for any $\varepsilon>0$, $\seq{X_n}$ has finite $\varepsilon$-fluctuations almost surely. This leads to the following quantitative notion:
\begin{definition}
\label{def:quantitative:fluctuations}
Let $\seq{X_n}$ be a stochastic process. For fixed $\varepsilon>0$, any function $\phi:(0,1)\to \RR$ satisfying
	\begin{equation*}
	\PP\left(\flucinf{\varepsilon}{X_n}\geq \phi(\lambda)\right)<\lambda \ \ \ \mbox{for all $\lambda\in (0,1)$}
	\end{equation*}
	is called a \emph{modulus of finite $\varepsilon$-fluctuations} for $\seq{X_n}$. Any function $\phi:(0,1)\times (0,1)\to \RR$ such that $\phi(\cdot,\varepsilon)$ is a modulus of finite $\varepsilon$-fluctuations for all $\varepsilon\in (0,1)$ is simply called a \emph{modulus of finite fluctuations} for $\seq{X_n}$.
\end{definition}

A modulus of finite fluctuations is just another way of formulating the rate of convergence of
\begin{equation*}
\lim_{N\to\infty}\PP(\flucinf{\varepsilon}{X_n}\geq N)=0
\end{equation*}
a quantitative notion that has been widely explored, particularly in the context of martingales (e.g. \cite{kachurovskii:96:convergence}). Several results in this direction will be quoted later.

In order to bring quantifiers out of the statement that, almost surely, a stochastic process has finite $[\alpha,\beta]$-crossings for all intervals $[\alpha,\beta]$, we must find a way to encode quantification over all intervals in a monotone way. Our choice of encoding, which in turn informs our definition of the corresponding modulus, reflects the way that crossings are used in the convergence proofs we analyse. To be precise, having finite crossings over arbitrary intervals almost surely is equivalent to the statement
\begin{equation*}
\PP\left(\forall k,M\, \forall [\alpha,\beta]\in \mathcal{P}(M,2^{-k})\, \crsinf{\iota}{X_n}<\infty \right)=1
\end{equation*}
where $\mathcal{P}(M,l)$ is as in Definition \ref{def:P:subintervals}. Now applying Lemma \ref{res:bring:out:quantifiers}, noting that the inner formula is monotone decreasing in both $k$ and $M$, this is equivalent to
\begin{equation*}
\forall k,M \left(\PP\left(\forall [\alpha,\beta]\in \mathcal{P}(M,2^{-k})\, \crsinf{[\alpha,\beta]}{X_n}<\infty \right)=1\right)
\end{equation*}
and so for any $\alpha<\beta$, picking $k,M$ such that there exists $[\alpha',\beta']\in \mathcal{P}(M,2^{-k})$ with $[\alpha',\beta']\subseteq [\alpha,\beta]$ establishes that $\PP(\crsinf{[\alpha,\beta]}{X_n}<\infty)=1$. 
\begin{definition}
\label{def:quantitative:crossings}
Let $\seq{X_n}$ be a stochastic process.\medskip
\begin{enumerate}[(a)]

	\item For fixed $\alpha<\beta$, any function $\phi:(0,1)\to \RR$ satisfying
	\begin{equation*}
	\PP\left(\crsinf{[\alpha,\beta]}{X_n}\geq \phi(\lambda)\right)<\lambda \ \ \ \mbox{for all $\lambda\in (0,1)$}
	\end{equation*}
	is called a \emph{modulus of finite $[\alpha,\beta]$-crossings} for $\seq{X_n}$.\medskip
	
	\item Any function $\phi:(0,1)\times (0,\infty)\times \NN\to \RR$ satisfying
	\begin{equation*}
	\PP\left(\exists [\alpha,\beta]\in \mathcal{P}(M,l)\, \crsinf{[\alpha,\beta]}{X_n}\geq \phi(\lambda,M,l)\right)<\lambda
	\end{equation*}
	for all $\lambda\in (0,1)$, $M\in (0,\infty)$ and $l\in\NN$ is called a \emph{modulus of finite crossings} for $\seq{X_n}$.
\end{enumerate}
\end{definition}

\begin{lemma}
\label{res:quantitative:crossings}
\begin{enumerate}[(i)]

	\item If $\phi$ is a modulus of finite crossings and $\alpha<\beta$, then $\psi(\lambda):=\phi(\lambda,M,l)$ is a modulus of finite $[\alpha,\beta]$-crossings whenever $[\alpha',\beta']\subseteq [\alpha,\beta]$ for some $[\alpha',\beta']\in \mathcal{P}(M,l)$.\medskip
	
	\item If $\phi_{\alpha,\beta}$ is a modulus of finite $[\alpha,\beta]$-crossings for all $\alpha<\beta$, then
	\begin{equation*}
	\psi(\lambda,M,l):=\max\left\{\phi_{\alpha,\beta}\left(\frac{\lambda}{l}\right)\, \Bigl| \, [\alpha,\beta]\in \mathcal{P}(M,l)\right\}
	\end{equation*}
	is a modulus of finite crossings.

\end{enumerate}
\end{lemma}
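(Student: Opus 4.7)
The plan is to prove both parts via elementary pointwise observations about crossings, combined with a standard union bound in (ii).

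For (i), the key observation is monotonicity: if $[\alpha',\beta']\subseteq[\alpha,\beta]$, then pointwise for each $\omega\in\Omega$ we have $\crsinf{[\alpha,\beta]}{X_n}(\omega)\leq \crsinf{[\alpha',\beta']}{X_n}(\omega)$, since any consecutive pair of indices witnessing a crossing of the larger interval (going from $\leq\alpha$ to $\geq\beta$ or vice versa) automatically witnesses a crossing of the smaller interval (going from $\leq\alpha'$ to $\geq\beta'$ or vice versa). Therefore
\[
\left\{\crsinf{[\alpha,\beta]}{X_n}\geq \phi(\lambda,M,l)\right\}\subseteq \left\{\crsinf{[\alpha',\beta']}{X_n}\geq \phi(\lambda,M,l)\right\},
\]
and since $[\alpha',\beta']\in\mathcal{P}(M,l)$, this latter event is in turn contained in $\{\exists [\alpha'',\beta'']\in\mathcal{P}(M,l)\, \crsinf{[\alpha'',\beta'']}{X_n}\geq \phi(\lambda,M,l)\}$, which has probability $<\lambda$ by hypothesis.

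For (ii), I would first enumerate $\mathcal{P}(M,l)=\{[\alpha_1,\beta_1],\ldots,[\alpha_l,\beta_l]\}$, then decompose the event via a union bound:
\[
\PP\left(\exists [\alpha,\beta]\in\mathcal{P}(M,l)\, \crsinf{[\alpha,\beta]}{X_n}\geq \psi(\lambda,M,l)\right)\leq \sum_{i=1}^{l}\PP\left(\crsinf{[\alpha_i,\beta_i]}{X_n}\geq \psi(\lambda,M,l)\right).
\]
By the definition of $\psi$ as a max, we have $\psi(\lambda,M,l)\geq \phi_{\alpha_i,\beta_i}(\lambda/l)$ for every $i$, so each term on the right is bounded above by $\PP(\crsinf{[\alpha_i,\beta_i]}{X_n}\geq \phi_{\alpha_i,\beta_i}(\lambda/l))<\lambda/l$, giving a total $<l\cdot(\lambda/l)=\lambda$.

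There is no substantial obstacle here; the argument is essentially bookkeeping. The only subtlety worth flagging is the strict versus non-strict inequalities: since each summand is strictly bounded by $\lambda/l$, the sum is strictly bounded by $\lambda$, which is exactly what the modulus definition demands. This lemma plays the role of showing that the two formulations in Definition~\ref{def:quantitative:crossings} are essentially interchangeable up to the partition parameters, which justifies using whichever form is more convenient in later applications.
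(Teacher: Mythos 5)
Your proof is correct and follows the same route as the paper: the paper dismisses (i) as ``clear'' (you spell out the monotonicity of crossings over nested intervals, which is indeed the intended reasoning), and for (ii) the paper uses precisely the same union bound over the $l$ intervals of $\mathcal{P}(M,l)$ followed by the fact that $\psi$ dominates each $\phi_{\alpha,\beta}(\lambda/l)$.
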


\begin{proof}
The first part is clear, for (ii) we observe that
\begin{equation*}
\begin{aligned}
&\PP\left(\exists [\alpha,\beta]\in \mathcal{P}(M,l)\, \crsinf{[\alpha,\beta]}{X_n} 
\geq \psi(\lambda,M,l)\right)\\ 
&\leq \sum_{[\alpha,\beta]\in\mathcal{P}(M,l)}\PP\left(\crsinf{[\alpha,\beta]}{X_n}\geq \psi(\lambda,M,l)\right)\\
&\leq \sum_{[\alpha,\beta]\in\mathcal{P}(M,l)}\PP\left(\crsinf{[\alpha,\beta]}{X_n}\geq \phi_{\alpha,\beta}\left(\lambda/l\right)\right)\\
&<\sum_{[\alpha,\beta]\in\mathcal{P}(M,l)}\frac{\lambda}{l}=\lambda
\end{aligned}
\end{equation*}
where for the last step we recall that $\mathcal{P}(M,l)$ consists of $l$ intervals by definition.
\end{proof}

\begin{remark}
\label{rem:markov:crossings}
Just as in Lemma \ref{res:markov:tightness}, Markov's inequality gives us concrete moduli for the above when the expectation is bounded. For example, if $f:\RR\times \RR\to (0,\infty)$ is a function satisfying
\begin{equation*}
\EE(\crsinf{[\alpha,\beta]}{X_n})<\tau(\alpha,\beta)
\end{equation*}
for all $\alpha<\beta$, then $\phi_{[\alpha,\beta]}(\lambda):=\tau(\alpha,\beta)/\lambda$ is a modulus of finite $[\alpha,\beta]$-crossings for $\seq{X_n}$, and similarly for $\varepsilon$-fluctuations bounded in mean.
\end{remark}

\begin{remark}
\label{rem:upcr}
In the literature, one generally encounters inequalities that deal specifically with \emph{upcrossings} rather than crossings, as such inequalities are easier to prove directly. These typically take the form
\begin{equation*}
\EE(\upcrinf{[\alpha,\beta]}{X_n})\leq \tau(\alpha,\beta)
\end{equation*}
where the random variable $\upcrinf{[\alpha,\beta]}{X_n}$ represents the number of times $\seq{X_n}$ upcrosses the interval $[\alpha,\beta]$. Fixing some sequence $\seq{X_n(\omega)}$, it is clear that between any two consecutive upcrossings, there has to be exactly one downcrossing, and therefore
\begin{equation*}
\crsinf{[\alpha,\beta]}{X_n(\omega)}\leq 2\upcrinf{[\alpha,\beta]}{X_n(\omega)}+1
\end{equation*}
and thus
\begin{equation*}
\EE(\crsinf{[\alpha,\beta]}{X_n})\leq 2\EE(\upcrinf{[\alpha,\beta]}{X_n})+1\leq 2\tau(\alpha,\beta)+1.
\end{equation*}
Therefore, in general, upcrossing inequalities imply that $\seq{X_n}$ has almost surely finite crossings and give us the required modulus (the same is true for downcrossings, $\dcrinf{[\alpha,\beta]}{X_n}$). For us, it is slightly easier to work with crossing inequalities, and in concrete cases, these can always be instantiated from the relevant upcrossing or downcrossing inequalities.
\end{remark}

\subsection{Almost sure convergence}
\label{sec:moduli:convergence}

By encoding quantification over $\varepsilon\in (0,1)$ by quantification over $2^{-k}$, entirely analogously to the case of fluctuations, we can show that almost sure Cauchy convergence of a stochastic process $\seq{X_n}$ is equivalent to:
\begin{equation*}
\forall \varepsilon\in (0,1)\, \PP(\exists n\, \forall m\geq n\, \forall i,j\in [n;m](|X_i-X_j|< \varepsilon))=1
\end{equation*}
This has exactly the form of Theorem \ref{res:equivalent:metastabe} (a) for
\[
A(n,m):=\exists i,j\in [n;m]\, (|X_i-X_j|\geq \varepsilon)
\]
where $A(n,m)$ satisfies the required monotonicity property that $A(n',m')\subseteq A(n,m)$ for $n\leq n'$ and $m'\leq m$, and also $A(n,m)=\emptyset$ for $m\leq n$. Therefore the results of Section \ref{sec:finitizing:metastability} and \ref{sec:finitizing:learnability} apply in this case, and we arrive at the following definitions:

\begin{definition}
\label{def:quantitative:almostsureconvergence}
Let $\seq{X_n}$ be a stochastic process.\medskip
\begin{enumerate}[(a)]

	\item Any function $\phi:(0,1)\times (0,1)\to \RR$ satisfying
	\begin{equation*}
	\PP(\exists i,j\geq \phi(\lambda,\varepsilon)\, (|X_i-X_j|\geq \varepsilon))<\lambda
	\end{equation*}
	for all $\lambda,\varepsilon\in(0,1)$ is called a \emph{rate of almost sure convergence} for $\seq{X_n}$.\medskip
	
	\item Any functional $\Phi:(0,1)\times (0,1)\times (\NN\to\NN)\to \NN$ such that for all $\lambda,\varepsilon\in (0,1)$ and $g:\NN\to\NN$ there exists $n\leq \Phi(\lambda,\varepsilon,g)$ satisfying
	\begin{equation*}
	\PP(\exists i,j\in [n;n+g(n)](|X_i-X_j|\geq \varepsilon))<\lambda
	\end{equation*}
	is called a \emph{metastable rate of uniform convergence} for $\seq{X_n}$.\medskip

	\item Any functional $\Phi:(0,1)\times (0,1)\times (\NN\to\NN)\to \NN$ such that for all $\lambda,\varepsilon\in(0,1)$ and $g:\NN\to\NN$
	\begin{equation*}
	\PP(\forall n\leq \Phi(\lambda,\varepsilon,g)\, \exists i,j\in [n;n+g(n)](|X_i-X_j|\geq \varepsilon))<\lambda
	\end{equation*}
	is called a \emph{metastable rate of pointwise convergence} for $\seq{X_n}$.
	
\end{enumerate}
\end{definition}
None of these definitions are new: The former is nothing more than a convergence rate for
\begin{equation*}
\lim_{n\to\infty}\PP\left(\sup_{i,j\geq n}(|X_i-X_j|\geq \varepsilon)\right)=0
\end{equation*}
which is just a Cauchy variant of the obvious way of formulating the speed of almost sure convergence, while the two metastable versions of this property are introduced (for general measures) in \cite{avigad-dean-rute:12:dominated}, where in the terminology used in \cite{avigad-dean-rute:12:dominated}, a uniform metastable rate generates bounds on the \emph{$\lambda$-uniform $\varepsilon$-metastable convergence of $\seq{X_n}$} for all $\lambda,\varepsilon>0$, while a pointwise rate generates a \emph{$\lambda$-uniform bound for the $\varepsilon$-
metastable pointwise convergence of $\seq{X_n}$} for all $\lambda,\varepsilon>0$.

\begin{remark}
\label{rem:specker}
The main reason for introducing and studying metastable rates of almost sure convergence is that direct computable rates are, in general, impossible for the classes of stochastic processes we consider. A standard result of computable analysis due to Specker \cite{specker:49:sequence} asserts the existence of monotone sequences $\seq{s_n}$ of rational numbers with no computable limit, and hence no computable rates of convergence. Any monotone sequence of rational numbers can be trivially encoded as a sub- or supermartingale where each element is constant, i.e.\ $X_n:=s_n$ for all $n\in\NN$, and since then we would have
\[
\PP(\exists i,j\geq n\, |X_i-X_j|\geq \varepsilon)<\frac{1}{2}\iff \forall i,j\geq n\, |s_i-s_j|<\varepsilon
\]
a computable solution to the halting problem would result from a computable rate of almost sure convergence for $\seq{X_n}$. We emphasise that this situation is in stark contrast to related work in computable analysis \cite{rute:etal:algorithmic:doob}, which imposes computability assumptions (e.g. on the elements and limit of $\seq{X_n}$) to obtain almost sure rates.
\end{remark}

We already appealed to an abstract construction from \cite{avigad-dean-rute:12:dominated} in our proof of Theorem \ref{res:equivalent:metastabe}. In fact, the main goal of \cite{avigad-dean-rute:12:dominated} was to apply this abstract construction to provide a route from pointwise to uniform metastable rates of convergence, and in this way give a quantitative account of Egorov's theorem (which in particular implies a strengthening of a quantitative dominated convergence theorem formulated and used by Tao \cite[Theorem A.2]{tao:08:ergodic}). Their central result is as follows:
\begin{theorem}
[Avigad, Dean \& Rute, Theorem 3.1 of \cite{avigad-dean-rute:12:dominated}]
\label{res:egorov:avigad}
For every $\varepsilon>0$, $\lambda>\lambda'>0$ and functional $M_1:(\NN\to\NN)\to \NN$, there is a functional 
\begin{equation*}
M_2:=\Gamma(\lambda,\lambda',M_1)
\end{equation*}
such that whenever
\begin{equation}
\label{eqn:pointwise:avigad}
\PP(\forall n\leq M_1(f_1)\, \exists i,j\in [n;f_1(n)](|X_i-X_j|\geq \varepsilon))<\lambda'
\end{equation}
for all $f_1:\NN\to\NN$, then for any $f_2:\NN\to\NN$ there exists some $n\leq M_2(f_2)$ such that
\begin{equation}
\label{eqn:uniform:avigad}
\PP(\exists i,j\in [n;f_2(n)](|X_i-X_j|\geq \varepsilon))<\lambda.
\end{equation}
\end{theorem}
As a direct consequence of Theorem \ref{res:egorov:avigad} we obtain the following passage from metastable pointwise to uniform rates:
\begin{corollary}
\label{res:pointwise:to:uniform}
Suppose that $\Phi$ is a metastable rate of pointwise convergence for $\seq{X_n}$ and let $\Gamma$ be the construction from Theorem \ref{res:egorov:avigad}. Then, a metastable rate of uniform convergence is given by
\begin{equation*}
\Psi(\lambda,\varepsilon,g):=\Gamma\left(\lambda,\tfrac{\lambda}{2},M_1^{\lambda,\varepsilon}\right)(\tilde g)
\end{equation*}
where $\tilde g(n):=n+g(n)$ and 
\begin{equation*}
M_1^{\lambda,\varepsilon}(f_1):=\Phi\left(\tfrac{\lambda}{2},\varepsilon,\bar{f}_1\right)
\end{equation*}
for $\bar{f}_1(n):=f_1(n)-n$ if $f_1(n)\geq n$ and $\bar{f}_1(n):=0$ otherwise.
\end{corollary}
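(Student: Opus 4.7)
The plan is to apply Theorem~\ref{res:egorov:avigad} directly as a black box, with the parameters $\lambda':=\lambda/2$ and $M_1:=M_1^{\lambda,\varepsilon}$ chosen exactly as in the statement. Once the hypothesis~(\ref{eqn:pointwise:avigad}) of that theorem is verified, its conclusion~(\ref{eqn:uniform:avigad}), instantiated at $f_2:=\tilde g$, will immediately give us precisely what is needed for $\Psi(\lambda,\varepsilon,g)$ to be a metastable rate of uniform convergence, since by construction $[n;\tilde g(n)] = [n;n+g(n)]$ and $\Psi(\lambda,\varepsilon,g) = \Gamma(\lambda,\lambda/2,M_1^{\lambda,\varepsilon})(\tilde g)$.

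The work, then, lies in verifying the hypothesis. Fix $f_1:\NN\to\NN$; since $\Phi$ is a pointwise metastable rate, Definition~\ref{def:quantitative:almostsureconvergence}(c) applied with $g:=\bar f_1$ and $\lambda/2$ gives
\[
\PP(\forall n\leq \Phi(\lambda/2,\varepsilon,\bar f_1)\, \exists i,j\in [n;n+\bar f_1(n)](|X_i-X_j|\geq \varepsilon))<\lambda/2.
\]
By definition, $\Phi(\lambda/2,\varepsilon,\bar f_1)=M_1^{\lambda,\varepsilon}(f_1)$, so it only remains to observe that the events inside the probability in the display above and in (\ref{eqn:pointwise:avigad}) coincide. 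Indeed, if $f_1(n)\geq n$ then $\bar f_1(n)=f_1(n)-n$ and hence $[n;n+\bar f_1(n)]=[n;f_1(n)]$, while if $f_1(n)<n$ then $[n;f_1(n)]=\emptyset$ and $[n;n+\bar f_1(n)]=\{n\}$, both of which make the inner existential statement $\exists i,j\,(|X_i-X_j|\geq\varepsilon)$ vacuously false (the latter because $|X_n-X_n|=0<\varepsilon$). Either way, the two events agree as subsets of $\Omega$, so the hypothesis~(\ref{eqn:pointwise:avigad}) is satisfied.

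Applying Theorem~\ref{res:egorov:avigad} with these parameters yields $M_2:=\Gamma(\lambda,\lambda/2,M_1^{\lambda,\varepsilon})$ such that for every $f_2:\NN\to\NN$ there is some $n\leq M_2(f_2)$ with $\PP(\exists i,j\in [n;f_2(n)](|X_i-X_j|\geq \varepsilon))<\lambda$. Taking $f_2:=\tilde g$ produces an $n\leq \Psi(\lambda,\varepsilon,g)$ satisfying the condition required by Definition~\ref{def:quantitative:almostsureconvergence}(b), completing the proof. The only genuine subtlety is the role of the encoding $\bar f_1$: Theorem~\ref{res:egorov:avigad} is phrased in terms of functions $f_1$ whose values are interpreted as \emph{endpoints} of intervals, whereas our Definition~\ref{def:quantitative:almostsureconvergence}(c) expects a function describing interval \emph{widths}. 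The truncation used to define $\bar f_1$ is the minimal bookkeeping needed to bridge these two conventions without altering the probabilistic content of the statement.
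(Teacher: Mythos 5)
Your proof is correct and follows essentially the same route as the paper's: verify hypothesis (\ref{eqn:pointwise:avigad}) with $\lambda' := \lambda/2$ and $M_1 := M_1^{\lambda,\varepsilon}$ by relating $[n; f_1(n)]$ to $[n; n+\bar f_1(n)]$ (including the $f_1(n)<n$ edge case), then apply Theorem~\ref{res:egorov:avigad} with $f_2 := \tilde g$. The only cosmetic difference is that you describe the $\{n\}$ case as ``vacuously false'' when strictly it is non-vacuously false (since $|X_n - X_n| = 0 < \varepsilon$), but you correctly identify the reason in the parenthetical, so the argument is sound.
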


\begin{proof}
Fixing $\lambda,\varepsilon>0$, the functional $M_1^{\lambda,\varepsilon}$ satisfies (\ref{eqn:pointwise:avigad}) for $\lambda':=\lambda/2$, noting that $n+\bar{f}_1(n)=f_1(n)$ unless $f_1(n)<n$, in which case $\exists i,j\in [n;f_1(n)](|X_i-X_j|\geq \varepsilon)$ and $\exists i,j\in [n;n+\bar{f}_1(n)](|X_i-X_j|\geq \varepsilon)$ are both empty. Thus by Theorem \ref{res:egorov:avigad}, for any $g:\NN\to\NN$ there exists some $n\leq \Psi(\lambda,\varepsilon,g)$ satisfying (\ref{res:pointwise:to:uniform}) for $f_2:=\tilde{g}$, and since $\lambda,\varepsilon>0$ are arbitrary, we are done.
\end{proof}

\begin{remark}
\label{rem:blowup}
The precise construction of $\Gamma$ in \cite{avigad-dean-rute:12:dominated} uses a form of recursion on trees known as bar recursion (introduced in \cite{spector:62:barrecursion}). In general, this causes a phenomenal growth in complexity: Even the trivial input sketched on page 11 of \cite{avigad-dean-rute:12:dominated} results in a tower of exponentials when passed through $\Gamma$. However, we will see that in cases where we are able to explicitly define pointwise metastable rates of convergence, strengthening our assumptions, typically to some bound on the expected value of fluctuations or crossings, allows us to find uniform metastable rates of similarly low complexity, demonstrating that this apparent blowup in complexity from pointwise to uniform convergence may not actually be present in ordinary mathematical situations, such as martingale convergence.
\end{remark}

In this paper, all pointwise or uniform metastable rates we obtain will have learnable form in the sense of Definition \ref{def:learnability}, which we now make explicit as follows:

\begin{definition}
\label{def:quantitative:almostsureconvergence:learnable}
Let $\seq{X_n}$ be a stochastic process.\medskip
\begin{enumerate}[(a)]

	\item Any function $\phi:(0,1)\times (0,1)\to \RR$ satisfying
    \[
    \exists n\leq \phi(\lambda,\varepsilon)\, \PP\left(\exists i,j\in [a_n;b_n]\, (|X_i-X_j|\geq \varepsilon)\right)<\lambda
    \]
    for any $a_0<b_0\leq a_1<b_1\leq \ldots$ is called a \emph{learnable rate of uniform convergence}.\medskip

	\item Any function $\phi:(0,1)\times (0,1)\to \RR$ satisfying
    \[
    \PP\left(\forall n\leq \phi(\lambda,\varepsilon)\, \exists i,j\in [a_n;b_n]\, (|X_i-X_j|\geq \varepsilon)\right)<\lambda
    \]
    for any $a_0<b_0\leq a_1<b_1\leq \ldots$ is called a \emph{learnable rate of pointwise convergence}.
	
\end{enumerate}
\end{definition}
By Lemma \ref{res:learnable:equivalences}, learnable rates of uniform convergence $\phi(\lambda,\varepsilon)$ correspond to metastable rates of uniform convergence of the form $\Phi(\lambda,\varepsilon,g)=\tilde g^{(\lceil\phi(\lambda,\varepsilon)\rceil)}(0)$ and similarly for pointwise convergence.

\subsection{From fluctuations to convergence}
\label{sec:moduli:fluctoconv}

Instantiating Theorem \ref{res:abstract:fluctuations:to:rates} in the concrete setting of $\varepsilon$-fluctuations and almost sure Cauchy convergence gives us our first simple quantitative convergence results:

\begin{theorem}
    \label{res:fluc:to:convergence}

    Let $\seq{X_n}$ be a stochastic process.
    
    \begin{enumerate}[(i)]

        \item If $\phi(\lambda,\varepsilon)$ is a modulus of fluctuations for $\seq{X_n}$, then it is also a learnable rate of pointwise convergence, and thus $\Phi(\lambda,\varepsilon,g)={g^{(\lceil\phi(\lambda,\varepsilon)\rceil)}(0)}$ is a metastable rate of pointwise convergence for $\seq{X_n}$.\medskip

        \item If $\tau(\varepsilon)$ such that $\EE[\flucinf{\varepsilon}{X_n}]\leq \tau(\varepsilon)$ for all $\varepsilon\in (0,1)$, then $\phi(\lambda,\varepsilon):= \tau(\varepsilon)/\lambda$ is a learnable rate of uniform convergence, and thus $\Phi(\lambda,\varepsilon,g)={\tilde g^{(\lceil\tau(\varepsilon)/\lambda\rceil)}(0)}$ is a metastable rate of uniform convergence for $\seq{X_n}$.

    \end{enumerate}
\end{theorem}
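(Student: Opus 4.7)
The plan is to instantiate the abstract Theorem \ref{res:abstract:fluctuations:to:rates} at the specific formula
\[
A(n,m):=\exists i,j\in[n;m]\,(|X_i-X_j|\geq\varepsilon),
\]
for which $B=\exists n\,\forall m\geq n\, A(n,m)^c$ is precisely the event that $\seq{X_n}$ is eventually $\varepsilon$-Cauchy. The required monotonicity of $A$ and the convention $A(n,m)=\emptyset$ for $m\leq n$ were already recorded in Section \ref{sec:moduli:convergence}, and with this choice of $A$ the learnable rates of Definition \ref{def:learnability} agree verbatim with the learnable rates of Cauchy convergence from Definition \ref{def:quantitative:almostsureconvergence:learnable}.

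The crux will be to identify the abstract counter $J_{\infty,A}$ with the concrete fluctuation count $\flucinf{\varepsilon}{X_n}$ pointwise in $\omega$. Given a fluctuation sequence $i_1<j_1\leq\ldots\leq i_k<j_k$ witnessing $\flucinf{\varepsilon}{X_n}(\omega)\geq k$, I would just set $a_{l-1}:=i_l$ and $b_{l-1}:=j_l$ to witness $J_{\infty,A}(\omega)\geq k$. For the reverse direction, from $a_0<b_0\leq\ldots\leq a_{k-1}<b_{k-1}$ with each $A(a_n,b_n)$ holding I would extract witnesses $(i_n,j_n)\in[a_n;b_n]^2$ with $|X_{i_n}-X_{j_n}|\geq\varepsilon$; since $\varepsilon>0$ forces $i_n\neq j_n$, one may take $i_n<j_n$, and the chain $j_{n-1}\leq b_{n-1}\leq a_n\leq i_n$ then delivers a valid fluctuation sequence in $\seq{X_n}(\omega)$.

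With this identification in hand, part (i) falls out immediately: a modulus of finite $\varepsilon$-fluctuations is, by Definition \ref{def:quantitative:fluctuations}, precisely a rate of convergence for $\lim_{n\to\infty}\PP(J_{\infty,A}\geq n)=0$, so Theorem \ref{res:abstract:fluctuations:to:rates}(i) yields a learnable pointwise rate and Lemma \ref{res:learnable:equivalences} lifts it to the claimed metastable form. Part (ii) is entirely symmetric: $\EE[\flucinf{\varepsilon}{X_n}]\leq\tau(\varepsilon)$ transfers to $\EE[J_{\infty,A}]\leq\tau(\varepsilon)$, and Theorem \ref{res:abstract:fluctuations:to:rates}(ii) combined with Lemma \ref{res:learnable:equivalences} supplies the uniform metastable rate. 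I expect no real obstacle here, since the mathematical content has already been packaged into the abstract machinery of Section \ref{sec:finitizing}. The only piece of book-keeping to watch is that Theorem \ref{res:abstract:fluctuations:to:rates}(ii) is stated under a strict bound, whereas my hypothesis gives only $\EE[J_{\infty,A}]\leq\tau(\varepsilon)$; but its proof, whose contradiction reads $K<\lambda(\lceil K/\lambda\rceil+1)\leq\sum_n\PP(A(a_n,b_n))\leq K$, still goes through with the non-strict bound, so no workaround is needed.
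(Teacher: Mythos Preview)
Your proposal is correct and follows exactly the route the paper intends: the paper's proof is simply the one-line remark ``Instantiating Theorem \ref{res:abstract:fluctuations:to:rates} in the concrete setting of $\varepsilon$-fluctuations and almost sure Cauchy convergence,'' and you have supplied precisely that instantiation, including the pointwise identification $J_{\infty,A}=\flucinf{\varepsilon}{X_n}$ that the paper leaves implicit. Your observation about the strict-versus-non-strict bound in Theorem \ref{res:abstract:fluctuations:to:rates}(ii) is also correct and is a detail the paper does not flag.
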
   

We now give two elementary examples that explore the relationship between the two kinds of learnable rates and fluctuations.

\begin{example}
\label{ex:finitefluc:uniformfluc}
Define $\seq{X_n}$ on {the standard Borel measure space} $([0,1],\mathcal{F},\mu)$ by
    \begin{equation*}
        X_{2n}:=0 \ \ \ X_{2n+1}:=I_{(1/2(n+1),1/(n+1))}.
    \end{equation*}
   For fixed $\omega\in [0,1]$, we have $X_n(\omega)=0$ for sufficiently large $n$ (depending on $\omega$), and therefore $\seq{X_n}$ is almost surely convergent to zero and thus has finite fluctuations almost surely. More precisely, for $\varepsilon\in (0,1)$, every $n$ such that $X_{2n+1}(\omega)=1$ gives rise to two $\varepsilon$-fluctuations, and all such fluctuations arise in this way, so letting $\# S$ denote the number of elements in a finite set $S$, we have
    \begin{equation*}
        \flucinf{\varepsilon}{X_n(\omega)}\leq 2\#\{n \, | \, \omega\in I_{(1/2(n+1),1/(n+1))}\}\leq 2\#\{n \, | \, \omega< 1/(n+1)\}\leq 2/\omega
    \end{equation*}
    so that
    \begin{equation*}
        \PP(\flucinf{\varepsilon}{X_n}\geq a)\leq \PP(\{\omega\, | \, 2/\omega\geq a\})=2/a.
    \end{equation*}
    Therefore, $\phi(\lambda,\varepsilon):=1+2/\lambda$ is a modulus of fluctuations for all $\varepsilon\in (0,1)$, and thus also a learnable rate of pointwise convergence. On the other hand, for $\varepsilon\in (0,1)$ we have
    \begin{equation*}
    \begin{aligned}
        \PP(\exists i,j\in [2n;2n+1](|X_i-X_j|\geq\varepsilon))=\PP(X_{2n+1}=1)=\frac{1}{2(n+1)}
    \end{aligned}
    \end{equation*}
    and thus
    \begin{equation*}
       \EE[\flucinf{\varepsilon}{X_n}]\geq \sum_{n=0}^\infty \PP(\exists i,j\in [2n;2n+1](|X_i-X_j|\geq \varepsilon))=\sum_{n=0}^\infty \frac{1}{2(n+1)}=\infty
    \end{equation*}
    However, in this case, learnable rates of uniform convergence (and so, learnable metastable rates of uniform convergence) can be found. For example, we have
    \[
    \exists n\leq (1/2\lambda) \, \PP(\exists i,j\in [2n;2n+1](|X_i-X_j|\geq \varepsilon))<\lambda
    \]
    and it can be shown more generally (by taking into account the overlap of intervals) that a learnable uniform rate is given by 
    \begin{equation*}
        \psi(\lambda,\varepsilon):=   2/\lambda
    \end{equation*}
\end{example}

\begin{example}
\label{ex:uniform:stronger:pointwise}
Let $\mathcal{C}$ be the class of nonnegative stochastic processes $\seq{X_n}$ that are monotone and uniformly bounded above by $1$. These can experience at most $1/\varepsilon$ $\varepsilon$-fluctuations, and therefore, a modulus of fluctuations and hence learnable rate of pointwise convergence for any such process is given by
\[
\phi(\lambda,\varepsilon)= \frac{1}{\varepsilon} +1.
\]
Suppose now that $\psi(\lambda,\varepsilon)$ is a learnable rate of uniform convergence that applies uniformly in $\mathcal{C}$ i.e. $\psi(\lambda,\varepsilon)$ is a learnable rate of uniform convergence for any $\seq{X_n}\in\mathcal{C}$. Then we claim that
\begin{equation}
\label{eqn:uniformbound}
\frac{1}{\lambda\varepsilon}\leq \psi(\lambda,\varepsilon)
\end{equation}
for all $\varepsilon,\lambda\in (0,1)$. Suppose for contradiction that there exist $\varepsilon,\lambda\in (0,1)$ on which (\ref{eqn:uniformbound}) fails, where for simplicity we assume that $\varepsilon=1/M$ and $\lambda=1/N$ for some $M,N\in\NN$. We define a stochastic process $\seq{X_n}$ on the standard space $([0,1],\mathcal{F},\mu)$ and in terms of these parameters as follows: First, define the sequence of reals $\seq{x_n}$ by
\begin{equation*}
x_n:=\begin{cases}
  0 & \mbox{if $n=0$}\\
	i/M & \mbox{if $(i-1)N< n\leq iN$ for $i=1,\ldots,M$}\\
	1 & \mbox{if $n>MN$}
\end{cases}
\end{equation*}
so that we have $x_{j+1}-x_{j}=1/M$ for $j=iN$ and $i=0,\ldots,M-1$, and $x_{j+1}-x_{j}=0$ for all other $j\in\NN$. Now letting $I_0,\ldots,I_{N-1}$ represent a division of $[0,1]$ into $N$ equal partitions, we define 
\begin{equation*}
X_n(\omega):=\begin{cases}0 & \mbox{if $n<k$}\\ x_{n-k} & \mbox{otherwise}\end{cases} \ \ \ \mbox{for $\omega\in I_k$.}
\end{equation*}
Then analogously to the situation with $\seq{x_n}$, for $k=0,\ldots,N-1$ and $\omega\in I_k$ we have $X_{j+1}(\omega)-X_{j}(\omega)=1/M$ for $j=iN+k$ and $i=0,\ldots,M-1$, and $X_{j+1}(\omega)-X_{j}(\omega)=0$ for all other $j\in\NN$. This means that for all $j\leq MN-1$, there is exactly one $k=0,\ldots,N-1$ for which $X_{j+1}-X_{j}=1/M$ on $I_k$, and therefore
\begin{equation*}
\PP(|X_j-X_{j+1}|\geq \varepsilon)=\lambda
\end{equation*}
for all $\forall j\leq (1/\lambda\varepsilon)-1$. But since (\ref{eqn:uniformbound}) fails, we must have 
\[
\exists j<(1/\lambda\varepsilon)\, \PP(|X_j-X_{j+1}|\geq \varepsilon)<\lambda
\]
contradicting that this is a learnable rate of uniform convergence for $\seq{X_n}$, and thus a rate that applies to all sequences in $\mathcal{C}$. 
\end{example}

\subsection{Outline of next sections}
\label{sec:modulo:summary}

While Theorem \ref{res:fluc:to:convergence} gives us a very direct route from bounds on fluctuations to rates of convergence, it is bounds on crossings that play a more significant role in the theory or martingales (and also related areas such as ergodic theory). Thus, we are more interested in establishing a route from analogous bounds on crossings to rates of convergence. This is more subtle and forms the content of the next two sections, which examine the pointwise (Section \ref{sec:pointwise}) and uniform (Section \ref{sec:uniform}) cases separately. Then, in Section \ref{sec:applications}, we focus in greater detail on applications which arise from the uniform case.

\section{From finite crossings to pointwise convergence}
\label{sec:pointwise}

We begin by providing a stochastic analogue of Proposition \ref{res:fluctuations:crossings}, which, for any stochastic process, provides a quantitative route from the properties of being uniformly bounded and having finite crossings to having finite fluctuations (and hence a learnable rate of pointwise convergence). Results of this kind have been given before, notably in \cite{kachurovskii:96:convergence} (see e.g. Theorem 27 of that paper), but our use of abstract moduli allows us to give them a more general form:
\begin{theorem}
\label{res:fluctuations:crossings:stochastic}
Let $\seq{X_n}$ be a stochastic process. If $\phi$ is a modulus of finite crossings and $f$ is a modulus of uniform boundedness for $\seq{X_n}$ then
	\begin{equation*}
	\psi(\lambda,\varepsilon):=l\cdot\phi\left(\frac{\lambda}{2},M,l\right) \ \ \ \mbox{for} \ \  l:=\Bigl\lceil\frac{4M}{\varepsilon}\Bigr\rceil \ \ \mbox{and} \ \ M:=f\left(\frac{\lambda}{2}\right)
	\end{equation*}
	is a modulus of finite fluctuations for the same process, and therefore also a learnable rate of pointwise convergence.
\end{theorem}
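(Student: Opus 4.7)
The plan is to mirror the proof of the deterministic Proposition~\ref{res:fluctuations:crossings}(ii), applied pointwise on a suitable event of high probability. The key observation is that the deterministic pigeonhole argument converting a bound on crossings into a bound on fluctuations transfers mechanically to each sample path, provided we restrict to sample paths that are both uniformly bounded by $M$ and have few crossings of every interval of a fine enough partition of $[-M,M]$. The price is a union bound, which forces us to split $\lambda$ into $\lambda/2+\lambda/2$.

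Concretely, fix $\lambda,\varepsilon\in(0,1)$ and set $M:=f(\lambda/2)$ and $l:=\lceil 4M/\varepsilon\rceil$. Define
\[
A:=\Bigl\{\sup_{n\in\NN}|X_n|<M\Bigr\}, \qquad B:=\{\forall [\alpha,\beta]\in\mathcal{P}(M,l)\, \crsinf{[\alpha,\beta]}{X_n}<\phi(\lambda/2,M,l)\}.
\]
By the definition of a modulus of uniform boundedness we have $\PP(\comp{A})<\lambda/2$, and by the definition of a modulus of finite crossings $\PP(\comp{B})<\lambda/2$, so $\PP(A\cap B)>1-\lambda$. On $A\cap B$ we can now apply Proposition~\ref{res:fluctuations:crossings}(ii) pointwise: for any $\omega\in A\cap B$ the deterministic sequence $\{X_n(\omega)\}$ is contained in $[-M,M]$ and has strictly fewer than $\phi(\lambda/2,M,l)$ crossings of every interval in $\mathcal{P}(M,l)$, hence strictly fewer than $l\cdot\phi(\lambda/2,M,l)=\psi(\lambda,\varepsilon)$ $\varepsilon$-fluctuations. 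Therefore
\[
\PP(\flucinf{\varepsilon}{X_n}\geq\psi(\lambda,\varepsilon))\leq \PP(\comp{(A\cap B)})<\lambda,
\]
which says that $\psi$ is a modulus of finite fluctuations. That $\psi$ is then also a learnable rate of pointwise convergence is immediate from Theorem~\ref{res:fluc:to:convergence}(i).

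I do not expect any serious obstacle: all the real content is already contained in the deterministic Proposition~\ref{res:fluctuations:crossings}(ii) and in the abstract fluctuations-to-pointwise-rate passage of Theorem~\ref{res:fluc:to:convergence}. The only point requiring care is bookkeeping around strict versus non-strict inequalities — the moduli in Definitions~\ref{def:quantitative:boundedness} and \ref{def:quantitative:crossings} are framed with strict $<$, whereas the deterministic proposition is stated with $\leq$ — but $\sup_n|X_n(\omega)|<M$ on $A$ trivially gives $|X_n(\omega)|\leq M$ for every $n$, and the pigeonhole step of Proposition~\ref{res:fluctuations:crossings}(ii) preserves strict inequalities, so $\psi(\lambda,\varepsilon)$ appears as a strict upper bound on $\flucinf{\varepsilon}{X_n}$ as required by Definition~\ref{def:quantitative:fluctuations}.
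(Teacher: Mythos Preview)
Your proof is correct and follows essentially the same route as the paper: both restrict to the high-probability event where the sample path is bounded by $M$, apply the deterministic pigeonhole argument of Proposition~\ref{res:fluctuations:crossings}(ii) pointwise to obtain the inclusion $\{\flucinf{\varepsilon}{X_n}\geq l\cdot\phi(\lambda/2,M,l)\}\cap\{\sup_n|X_n|\leq M\}\subseteq\{\exists[\alpha,\beta]\in\mathcal{P}(M,l)\,\crsinf{[\alpha,\beta]}{X_n}\geq\phi(\lambda/2,M,l)\}$, and then split $\lambda$ as $\lambda/2+\lambda/2$ between the boundedness and crossings moduli. The only cosmetic difference is that you phrase the argument as a union bound on two explicitly named bad events $A^c$ and $B^c$, while the paper writes the same decomposition as $\PP(E)\leq\PP(\sup_n|X_n|>M)+\PP(E\cap\sup_n|X_n|\leq M)$.
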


\begin{proof}
Fix $\lambda>0$ and note that for any event $A$ and $M:=f(\lambda/2)$:
\begin{equation*}
\begin{aligned}
\PP(A)&\leq \PP\left(\sup_{n\in\NN}|X_n|> M\right)+\PP\left(A\cap \sup_{n\in\NN}|X_n|\leq M\right)\\
&<\frac{\lambda}{2}+\PP\left(A\cap \sup_{n\in\NN}|X_n|\leq M\right).
\end{aligned}
\end{equation*}
Therefore it suffices if for any $\varepsilon>0$ we can find $N\in\NN$ such that
\begin{equation}
\label{eqn:fluc:sup}
\PP\left(\flucinf{\varepsilon}{X_n}\geq N\cap \sup_{n\in\NN}|X_n|\leq M\right)<\frac{\lambda}{2}.
\end{equation}
For any fixed $\omega\in\Omega$, reasoning as in the proof of Proposition \ref{res:fluctuations:crossings}, if $\flucinf{\varepsilon}{X_n(\omega)}\geq N$ and $\sup_{n\in\NN}|X_n(\omega)|\leq M$, then for $l:=\lceil 4M/\varepsilon\rceil$, any interval in $\mathcal{P}(M,l)$ has width $\leq \varepsilon/2$, and so any $\varepsilon$-fluctuation of $\seq{X_n(\omega)}$ is also an $[\alpha,\beta]$-crossing for some $[\alpha,\beta]\in \mathcal{P}(M,l)$. By the pigeonhole principle there must therefore be some $[\alpha,\beta]\in \mathcal{P}(M,l)$ with $\crsinf{[\alpha,\beta]}{X_n(\omega)}\geq N/l$, and so we have shown that
\begin{equation*}
\PP\left(\flucinf{\varepsilon}{X_n}\geq N\cap \sup_{n\in\NN}|X_n|\leq M\right)\subseteq \PP\left(\exists [\alpha,\beta]\in \mathcal{P}(M,l)\, \crsinf{[\alpha,\beta]}{X_n(\omega)}\geq \frac{N}{l}\right)
\end{equation*}
Thus (\ref{eqn:fluc:sup}) holds for $N:=l\cdot\phi(\lambda/2,M,l)$.
\end{proof}

We now give a rephrasing of Theorem \ref{res:fluctuations:crossings:stochastic} in terms of traditional rates rather than our proof-theoretic moduli (c.f. the discussion at the beginning of Section \ref{sec:moduli}). This both facilitates comparison with known results on fluctuation bounds and also demonstrates the compact way in which moduli both encode and allow us to manipulate convergence rates. We note that the derivation of this result from Theorem \ref{res:fluctuations:crossings:stochastic} (ii) involves nothing beyond the manipulation of functions. For a similar translation from a quantitative result in terms of moduli to one involving ordinary rates, see \cite[Lemma 3.6]{powell-wiesnet:21:contractive}.

\begin{corollary}
\label{res:crossings:fluctuations:concrete}
Let $\seq{X_n}$ be a stochastic process such that
\begin{enumerate}[(a)]

	\item $\PP\left(\sup_{n\in\NN}|X_n|\geq a\right)<g(a)$ for all $a>0$, where $g$ is a strictly decreasing function satisfying $g(a) \to 0$ as $a \to \infty$,\smallskip
	
	\item $\PP\left(\crsinf{[\alpha,\beta]}{X_n} \ge a\right)< h_{\alpha,\beta}(a)$ for all $\alpha<\beta$ such that $\PP\left(\crsinf{[\alpha,\beta]}{X_n} > 0\right)>0$ and $a > 0$, where $h_{\alpha,\beta}$ is a strictly decreasing function satisfying $h_{\alpha,\beta}(a) \to 0$ as $a \to \infty$.

\end{enumerate}\smallskip
Then for all $\varepsilon>0$
\begin{equation*}
\PP\left(\flucinf{\varepsilon}{X_n}\geq a\right)<G^{-1}_\varepsilon(a)
\end{equation*}
for any strictly decreasing function, $G_\varepsilon$ satisfying
\begin{equation*}
l\cdot H\left(\frac{\lambda}{2},g^{-1}\left(\frac{\lambda}{2}\right),l\right)< G_\varepsilon(\lambda) \ \ \ \mbox{for} \ \ \ l:=\Bigl\lceil \frac{4g^{-1}(\lambda/2)}{\varepsilon}\Bigr\rceil
\end{equation*}
where $H$ is any function such that
\begin{equation*}
h^{-1}_{\alpha,\beta}\left(\frac{\lambda}{l}\right)\leq H(\lambda,M,l)
\end{equation*}
for any $\lambda,M,l>0$ and $[\alpha,\beta]\in\mathcal{P}(M,l)$ with $\PP\left(\crsinf{[\alpha,\beta]}{X_n} > 0\right)>0$.
\end{corollary}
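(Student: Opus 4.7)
The plan is to prove this corollary by passing through the moduli-based framework of Theorem \ref{res:fluctuations:crossings:stochastic}: I translate the traditional rates $g$ and $h_{\alpha,\beta}$ into corresponding moduli of uniform boundedness and finite crossings, apply the moduli-level result to extract a modulus of finite $\varepsilon$-fluctuations $\psi$, then invert back into a traditional rate on $\flucinf{\varepsilon}{X_n}$. The corollary carries no new probabilistic content; it is a bookkeeping translation between two notational conventions, and so serves to illustrate the compactness of the modulus language discussed at the start of Section \ref{sec:moduli}.

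For the translation step, since $g$ is strictly decreasing with $g(a)\to 0$, setting $f(\lambda):=g^{-1}(\lambda)$ gives
\[
\PP\left(\sup_{n\in\NN}|X_n|\geq f(\lambda)\right)<g(f(\lambda))=\lambda,
\]
so $f$ is a modulus of uniform boundedness per Definition \ref{def:quantitative:boundedness}. Analogously, $\phi_{\alpha,\beta}(\lambda):=h_{\alpha,\beta}^{-1}(\lambda)$ is a modulus of finite $[\alpha,\beta]$-crossings on each interval with $\PP(\crsinf{[\alpha,\beta]}{X_n}>0)>0$, with the trivial modulus $\equiv 0$ assigned on the remaining intervals. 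Lemma \ref{res:quantitative:crossings}(ii) combines these into a modulus of finite crossings
\[
\phi(\lambda,M,l):=\max\left\{h_{\alpha,\beta}^{-1}\left(\frac{\lambda}{l}\right)\, \Bigl| \, [\alpha,\beta]\in \mathcal{P}(M,l)\right\},
\]
which by the defining property of $H$ satisfies $\phi(\lambda,M,l)\leq H(\lambda,M,l)$ (intervals carrying the trivial modulus contribute only $0$ to the maximum).

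Plugging $f$ and $\phi$ into Theorem \ref{res:fluctuations:crossings:stochastic} yields the modulus of finite $\varepsilon$-fluctuations
\[
\psi(\lambda,\varepsilon):=l\cdot\phi\left(\frac{\lambda}{2},g^{-1}\!\left(\frac{\lambda}{2}\right),l\right), \quad l:=\Bigl\lceil\frac{4g^{-1}(\lambda/2)}{\varepsilon}\Bigr\rceil,
\]
which satisfies $\psi(\lambda,\varepsilon)\leq l\cdot H(\lambda/2,g^{-1}(\lambda/2),l) < G_\varepsilon(\lambda)$ by the hypothesis on $G_\varepsilon$. Combining the strict inequality $\psi(\lambda,\varepsilon)<G_\varepsilon(\lambda)$ with the defining property of $\psi$ and monotonicity of $b\mapsto \PP(\flucinf{\varepsilon}{X_n}\geq b)$ yields $\PP(\flucinf{\varepsilon}{X_n}\geq G_\varepsilon(\lambda))<\lambda$; substituting $a:=G_\varepsilon(\lambda)$ and using strict monotonicity of $G_\varepsilon$ produces the claimed bound.

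The main care is in matching conventions rather than any mathematical depth: the inverses $g^{-1}$, $h_{\alpha,\beta}^{-1}$, and $G_\varepsilon^{-1}$ must be well-defined on the relevant domains (guaranteed by the strict monotonicity and decay-to-zero hypotheses), intervals on which $\crsinf{[\alpha,\beta]}{X_n}=0$ almost surely must be handled cleanly in the maximum, and the factor of $1/2$ introduced by the probability-splitting step in the proof of Theorem \ref{res:fluctuations:crossings:stochastic} must be threaded consistently through the arguments of both $g^{-1}$ and $H$. None of these is a serious obstacle; the real work lies entirely in Theorem \ref{res:fluctuations:crossings:stochastic}, and the corollary itself is a demonstration that the translation back to traditional rates is notationally heavier than the modulus formulation but conceptually routine.
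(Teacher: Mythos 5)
Your proposal follows the same route as the paper: translate the traditional rates $g$ and $h_{\alpha,\beta}$ into moduli of uniform boundedness and finite crossings respectively, combine the latter via Lemma \ref{res:quantitative:crossings}(ii) into a modulus of finite crossings bounded by $H$, feed both moduli into Theorem \ref{res:fluctuations:crossings:stochastic}, and invert the resulting modulus of finite $\varepsilon$-fluctuations to recover the traditional rate $G_\varepsilon^{-1}$. The structure, the intermediate objects, and the final substitution $a:=G_\varepsilon(\lambda)$ all agree with the paper, which works a touch more directly by observing that $H$ itself is already a modulus of finite crossings rather than introducing the intermediate maximum $\phi$.

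One small inaccuracy is worth flagging: you assign ``the trivial modulus $\equiv 0$'' to intervals with $\PP(\crsinf{[\alpha,\beta]}{X_n}>0)=0$, but the zero function does not satisfy Definition \ref{def:quantitative:crossings}(a), since $\PP(\crsinf{[\alpha,\beta]}{X_n}\geq 0)=1\not<\lambda$. Any strictly positive constant would do (then the probability is $0$), or, as the paper does, one can avoid assigning a modulus to those intervals at all and simply note that they contribute nothing to the union bound in the proof of Lemma \ref{res:quantitative:crossings}(ii), so the hypothesis restricted to intervals with positive crossing probability suffices. This does not change the argument, but the wording as stated is not literally correct.
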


\begin{proof}
By definition, $g^{-1}$ is a modulus of uniform boundedness for $\seq{X_n}$, and $h^{-1}_{\alpha,\beta}$ is a modulus of finite $[\alpha,\beta]$-crossings for all $\alpha<\beta$ with $\PP\left(\crsinf{[\alpha,\beta]}{X_n} > 0\right)>0$. By Lemma \ref{res:quantitative:crossings} and the property of $H$ (noting that the restriction to $[\alpha,\beta]\in\mathcal{P}(M,l)$ with $\PP\left(\crsinf{[\alpha,\beta]}{X_n} > 0\right)>0$ does not affect Lemma \ref{res:quantitative:crossings}), $H$ must be a modulus of finite crossings for $\seq{X_n}$. Now, by Theorem \ref{res:fluctuations:crossings:stochastic} (ii), any bound on
\begin{equation*}
l\cdot H(\lambda/2,g^{-1}(\lambda/2),l) 
\end{equation*}
for $l:=\lceil 4g^{-1}(\lambda/2)/\varepsilon\rceil$ is a modulus of finite $\varepsilon$-fluctuations for $\seq{X_n}$, and so by definition we have
\[
\PP\left(\flucinf{\varepsilon}{X_n}\geq G_\varepsilon(\lambda)\right)<\lambda
\]
for all $\lambda>0$, from which the result follows.
\end{proof}

\begin{example}
\label{ex:kachurovskii}
In the special case that $\seq{X_n}$ satisfies 
\begin{enumerate}[(i)]

	\item $\PP\left(\sup_{n\in\NN}|X_n|\geq a\right)<\frac{S}{a}$ for all $a>0$, \smallskip
	
	\item $\EE\left(\upcrinf{[\alpha,\beta]}{X_n}\right)< \frac{S+|\alpha|}{\beta-\alpha}$ for all $\alpha<\beta$,

\end{enumerate}
applying Corollary \ref{res:crossings:fluctuations:concrete} gives us
\begin{equation*}
\PP\left(\flucinf{\varepsilon}{X_n}\geq a\right)<\frac{c}{a^{1/4}}\left(1+\frac{S}{\varepsilon}\right)
\end{equation*}
for a constant $c\leq 3\times 16^3$ (we omit the details of this calculation). This case is already proven by Kachorovskii as \cite[Theorem 27]{kachurovskii:96:convergence}, where a better value of $c=7$ is obtained, though this requires some more careful (and ad-hoc) calculations.
\end{example} 
We now present a more involved example, where our abstract framework and use of moduli seems to result in a slight improvement of a known bound.
\begin{example}
\label{ex:ivanov}
Fix a measure preserving transformation $\tau:\Omega\to\Omega$ of our probability space $X:=(\Omega,\mathcal{F},\PP)$ and define the Koopman operator $T:L_1(X)\to L_1(X)$ by $Tf:=f\circ\tau$. For $f\in L_1(X)$ we define
\begin{equation*}
\begin{aligned}
S_nf:=\sum_{k=0}^{n-1} T^kf \ \  \mbox{and} \ \  A_nf:=\frac{S_nf}{n}.
\end{aligned}
\end{equation*}
The Birkhoff pointwise ergodic theorem states that the ergodic averages $A_nf$ converge almost surely, and it is shown by Kachorovskii in \cite[Theorem 23]{kachurovskii:96:convergence} (though the result is attributed to Ivanov) that the following bound on the probabilistic fluctuations can be given: 
\begin{equation}
\label{eqn:ivanov}
\PP\left(\flucinf{\varepsilon}{A_nf}\geq a\right)<c\sqrt{\frac{\log\left(a\right)}{a}}
\end{equation}
for all $f \in L_1(X)$ and $\varepsilon, a >0$, with $c > 0$ a constant that depends on $\EE(|f|)/\varepsilon$.

An alternative bound can be obtained through Corollary \ref{res:crossings:fluctuations:concrete}. As in \cite{kachurovskii:96:convergence}, we first assume $f\ge 0$. By the maximal ergodic theorem, we have for all $a > 0$,
\begin{equation*}
    \PP\left(\sup_{n\in\NN}|A_nf|\geq a\right)\le\frac{\EE(|f|)}{a},
\end{equation*}
so for any $S >\EE(|f|)$ we can take $g(a):=S/a$ in Corollary \ref{res:crossings:fluctuations:concrete}. For crossings, we use the well-known result of Ivanov \cite{Ivanov:oscillations:96} which states that for $0<\alpha<\beta$ and $k>0$,
\begin{equation*}
    \PP\left(\dcrinf{[\alpha,\beta]}{A_nf} \ge k\right)\le  \left(\frac{\alpha}{\beta}\right)^k 
\end{equation*}
where $\dcrinf{[\alpha,\beta]}{A_nf}$ denotes the number of downcrossings of $[\alpha,\beta]$ made by $\seq{A_nf}$. Thus Remark \ref{rem:upcr} allows us to conclude
\begin{equation*}
    \PP\left(\crsinf{[\alpha,\beta]}{A_nf} \ge k\right)<  \left(\frac{\alpha}{\beta}\right)^\frac{k-1}{4}
\end{equation*}
(we divide the exponent by another factor of 2 to obtain a strict inequality), so we can take  $h_{\alpha,\beta}(k) =(\alpha/\beta)^{\frac{k-1}{4}}$ in Corollary \ref{res:crossings:fluctuations:concrete}. We can restrict our attention $0< \alpha < \beta$ (i.e. the situation $\PP\left(\crsinf{[\alpha,\beta]}{A_nf}>0\right)>0$), and in this case
\[
h^{-1}_{\alpha,\beta}(\lambda)=\frac{4\log(1/\lambda)}{\log(\beta)-\log(\alpha)}+1\leq \frac{4\beta\log(1/\lambda)}{\beta-\alpha}+1
\]
where for the last step, we use
\[
\log(\beta) - \log(\alpha) \ge \frac{\beta -\alpha}{\beta}, 
\]
which follows from the mean value theorem. Thus for any $M,l>0$ and $[\alpha,\beta]\in \mathcal{P}(M,l)$ with $0<\alpha<\beta$, we have
\[
h^{-1}_{\alpha,\beta}\left(\frac{\lambda}{l}\right)\leq \frac{4\beta\log(l/\lambda)}{\beta-\alpha}+1=2l\cdot \log\left(\frac{l}{\lambda}\right)+1\leq 2l\cdot \log\left(\frac{2l}{\lambda}\right)=:H(\lambda,M,l)
\] 
and so the right hand side defines a suitable bounding function $H$. We then observe that for 
\[
l=\Bigl\lceil\frac{4g^{-1}(\lambda/2)}{\varepsilon}\Bigr\rceil\leq \frac{9S}{\lambda\varepsilon}
\]
it follows that
\[
l\cdot H\left(\frac{\lambda}{2},g^{-1}\left(\frac{\lambda}{2}\right),l\right)=2l^2\cdot \log\left(\frac{4l}{\lambda}\right)\leq c\left(\frac{S}{\lambda\varepsilon}\right)^2\cdot \log\left(\frac{cS}{\lambda^2\varepsilon}\right)=:G_{\varepsilon}(\lambda)
\]
for suitable constant $c\leq 200$. It remains to find the inverse of the function $G_{\varepsilon}$ defined above. To this end, suppose that
\[
a=c\left(\frac{S}{\lambda\varepsilon}\right)^2\cdot \log\left(\frac{cS}{\lambda^2\varepsilon}\right).
\] 
Rearranging we obtain
\[
\exp\left[\frac{a}{c}\left(\frac{\lambda\varepsilon}{S}\right)^2\right]=\frac{cS}{\lambda^2\varepsilon}.
\]
Now letting $E(x):=x\exp(x)$ we have
\[
E\left[\frac{a}{c}\left(\frac{\lambda\varepsilon}{S}\right)^2\right]=\frac{a}{c}\left(\frac{\lambda\varepsilon}{S}\right)^2\cdot \frac{cS}{\lambda^2\varepsilon}=\frac{aS}{\varepsilon}
\]
and therefore
\[
\lambda=\frac{S}{\varepsilon}\cdot\sqrt{\frac{c}{a}\cdot W\left(\frac{aS}{\varepsilon}\right)}.
\]
Where, $W$ is the inverse of the $E$ (i.e. the Lambert $W$-function), and so by Corollary \ref{res:crossings:fluctuations:concrete} we have
\[
\PP\left(\flucinf{\varepsilon}{A_nf}\geq a\right)<c_0\sqrt{\frac{W\left(c_0a\right)}{a}}
\]
for $c_0:=S\sqrt{c}/\varepsilon$. This implies Ivanov's bound (\ref{eqn:ivanov}) and improves it slightly in that $W(c_0a)<\log(a)$ for $c_0<\log(a)$. To see that the same bound holds for general $f$, not assumed to be nonnegative (for different constant $c_0$) we can decompose $f$ as the difference of two positive terms $f = f^+ -f^-$ where $f^+:= \max\{f, 0\}$ and $f^-:= \max\{-f, 0\}$ are the positive and negative parts of $f$ respectfully. Furthermore, we will have $\EE(|f^\pm|) < S$, $A_nf= A_n(f^+) - A_n(f^-)$ and 
    \begin{equation*}
    \PP\left(\flucinf{\varepsilon/2}{A_nf}\geq a\right) \le \PP\left(\flucinf{\varepsilon/2}{A_n(f^+)}\geq a/2\right) + \PP\left(\flucinf{\varepsilon}{A_n(f^-)}\geq a/2\right).
    \end{equation*}
Although we did not introduce any fundamentally new ideas to the proof of (\ref{eqn:ivanov}) given in \cite{kachurovskii:96:convergence}, our approach allows us to be more precise: The improved bound cannot be easily obtained through adapting the proof of \cite[Theorem 23 ]{kachurovskii:96:convergence}. Finally, this example shows that the apparently distinct results \cite[Theorems 23 and 27]{kachurovskii:96:convergence} are both subsumed under our abstract framework.
\end{example}

An open question at this stage is whether Theorem \ref{res:fluctuations:crossings:stochastic} (ii) can be improved by replacing the modulus of uniform boundedness with a modulus of tightness while still obtaining an explicit modulus of fluctuations in terms of that modulus. It is certainly the case that we can replace uniform boundedness with the weaker property of tightness in order to prove that finite crossings implies finite fluctuations: To see this, one can use a pointwise argument, namely that if $\crsinf{[\alpha,\beta]}{X_n(\omega)}<\infty$ for all $\alpha<\beta$, then $\seq{X_n(\omega)}$ converges to a limit in $\RR\cup\{\pm\infty\}$, and therefore 
\begin{equation*}
X_\infty:=\lim_{n\to\infty}X_n
\end{equation*}
exists almost surely in $\RR\cup\{\pm\infty\}$. By tightness of $\seq{X_n}$ we have that for any $\lambda>0$ there exists $N\in\NN$ such that
\begin{equation*}
\PP\left(|X_\infty|\geq N\right)=\PP\left(\liminf_{n\to\infty}|X_n|\geq N\right)\leq \liminf_{n\to\infty}\PP\left(|X_n|\geq N\right)< \lambda
\end{equation*}
by Fatou's lemma, and therefore $|X_\infty|<\infty$ almost surely, which in turn implies that $\seq{X_n}$ converges and thus has finite fluctuations almost surely. However, converting this argument into a quantitative one, where one obtains a concrete modulus of finite fluctuations in terms of the corresponding moduli of tightness and finite crossing, is less obvious, as Fatou's lemma seems fundamentally nonconstructive.

We conclude this section with an example that shows that very simple crossings can still result in fluctuations that converge arbitrarily slowly, demonstrating in principle the necessity of explicit quantitative information on the tightness of the sequences in order to say something meaningful about the fluctuations.

\begin{example}
\label{ex:slowfluc}
Let $\seq{a_n}\subset [0,1]$ be any decreasing sequence of reals that converges to zero, and define $\seq{X_n}$ by
\begin{equation*}
X_n:=nI_{[0,a_n)}.
\end{equation*}
Then for any $\alpha<\beta$, the sequence $\seq{X_n(\omega)}$ has at most two $[\alpha,\beta]$-crossings, and so $\crsinf{[\alpha,\beta]}{X_n}\leq 2$ almost surely, or alternatively, $\phi_{\alpha,\beta}(\lambda):=3$ is a modulus of $[\alpha,\beta]$-crossings. On the other hand, $\flucinf{1}{X_n(\omega)}\geq N$ if and only if $X_n(\omega)=n$ for all $n\leq N$, or in other words $\omega\in [0,a_N)$, and therefore
\begin{equation*}
\PP\left(\flucinf{1}{X_n}\geq N\right)\leq a_n
\end{equation*}
Therefore, even restricting $\seq{X_n}$ to the class of stochastic processes with crossings bounded above by $2$ does not tell us anything about the convergence speed of 
\begin{equation*}
\PP\left(\flucinf{1}{X_n}\geq N\right)\to 0
\end{equation*}
\end{example}

\section{From finite crossings to uniform convergence}
\label{sec:uniform}

Having shown in the previous section how to obtain learnable rates of pointwise convergence from a modulus of crossings, we now give an analogous result for uniform convergence. This will then allow us to give a direct quantitative analogue of Doob's martingale convergence theorem, along with more general convergence results for which upcrossing inequalities can be found. Upcrossing inequalities (including Doob's famous inequalities for martingales) typically come in the form of a bound on $\EE[\upcrinf{[\alpha,\beta]}{X_n}]$ for any $\alpha<\beta$. We start by giving a formulation of this property similar in spirit to the modulus of finite crossings introduced in Section \ref{sec:finitizing}.

\begin{definition}
    Any function $\psi:(0,\infty)\times \NN\to\RR$ satisfying
    \[
    \EE\left[\crsinf{[\alpha,\beta]}{X_n}\right]<\psi(M,l)
    \]
    for all $M,l$ and $[\alpha,\beta]\in \mathcal{P}(M,l)$ is called a modulus of $L_1$-crossing for $\seq{X_n}$.
\end{definition}

Our first step is to provide another analogue of Proposition \ref{def:P:subintervals}, using arguments similar to \cite[Section 4]{avigad-gerhardy-towsner:10:local}, though our presentation is more abstract and the issue of $L_1$-boundedness (not considered directly for upcrossings in \cite{avigad-gerhardy-towsner:10:local}) is for now handled by relativising via boundedness as an event.

\begin{proposition}
\label{res:fluctuations:crossings:unif}
Let $\seq{X_n}$ be a stochastic process with modulus of $L_1$-crossings $\psi$ and let $M>0$. Then the formula
\begin{equation*}
Q_M(\varepsilon,n,m):=(\exists l,k\in [n;m](|X_l-X_k|\geq \varepsilon))\cap (|X_n|\leq M)
\end{equation*}
satisfies
\[
\sum_{i=0}^\infty \PP(Q_M(\varepsilon,a_i,b_i))\leq \omega_M(\varepsilon)
\]
uniformly in $a_0<b_0\leq a_1<b_1\leq \ldots$ where
\begin{equation*}
\omega_M(\varepsilon):=(p+2)\cdot\psi\left(M\left(1+\frac{2}{p}\right),p+2\right) \ \ \ \mbox{for } p:=\Bigl\lceil \frac{8M}{\varepsilon}\Bigr\rceil.
\end{equation*}
\end{proposition}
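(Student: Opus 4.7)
The plan is to adapt the deterministic pigeonhole argument from Proposition \ref{res:fluctuations:crossings} to the stochastic setting, converting every realisation of the event $Q_M(\varepsilon, a_i, b_i)$ into at least one crossing of some subinterval of the partition $\mathcal{P}(M', p+2)$, where $M' := M(1 + 2/p)$. The choice of $M'$ is calibrated so that each subinterval of $\mathcal{P}(M', p+2)$ has width $w = 2M'/(p+2) = 2M/p \leq \varepsilon/4$ and, crucially, so that the leftmost and rightmost subintervals are exactly $[-M', -M]$ and $[M, M']$. This extra margin is precisely what is needed to capture any excursion of $\seq{X_n}$ just outside the boundary $[-M, M]$ licensed by the constraint $|X_{a_i}| \leq M$ built into $Q_M$.

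The heart of the argument is a pointwise case analysis. Fixing $\omega \in Q_M(\varepsilon, a_i, b_i)$, I would pick $l, k \in [a_i; b_i]$ with $|X_l(\omega) - X_k(\omega)| \geq \varepsilon$ and use the triangle inequality at $X_{a_i}(\omega)$ to locate some $j \in \{l,k\}$ with $|X_j(\omega) - X_{a_i}(\omega)| \geq \varepsilon/2$; without loss of generality suppose $X_j(\omega) - X_{a_i}(\omega) \geq \varepsilon/2$, with the opposite sign handled symmetrically using the leftmost subinterval $[-M', -M]$. If $X_j(\omega) \leq M'$ then $X_{a_i}(\omega), X_j(\omega) \in [-M', M']$ with gap $\geq \varepsilon/2 \geq 2w$, which forces $[X_{a_i}(\omega), X_j(\omega)]$ to contain a complete subinterval $[\alpha, \beta] \in \mathcal{P}(M', p+2)$ and so witnesses an $[\alpha, \beta]$-crossing within $[a_i; j] \subseteq [a_i; b_i]$. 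If instead $X_j(\omega) > M'$, then taking $[\alpha, \beta] := [M, M']$ gives $X_{a_i}(\omega) \leq M = \alpha$ and $X_j(\omega) > M' = \beta$, which is again a crossing within $[a_i; b_i]$.

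The passage from pointwise crossings to the stated global bound then exploits the monotonicity condition $a_0 < b_0 \leq a_1 < b_1 \leq \ldots$, which ensures that crossings witnessed within distinct index ranges $[a_i; b_i]$ are disjoint and so concatenate to form a valid chain of crossings for the full process. Letting $C_i^{[\alpha,\beta]}(\omega)$ count the $[\alpha, \beta]$-crossings with both endpoints in $[a_i; b_i]$, the previous step yields $I_{Q_M(\varepsilon, a_i, b_i)}(\omega) \leq \sum_{[\alpha,\beta]\in\mathcal{P}(M', p+2)} C_i^{[\alpha,\beta]}(\omega)$, while disjointness gives $\sum_i C_i^{[\alpha,\beta]}(\omega) \leq \crsinf{[\alpha,\beta]}{X_n}(\omega)$. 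Taking expectations and applying the modulus of $L_1$-crossings $\psi$ to each of the $p+2$ subintervals of $\mathcal{P}(M', p+2)$ then delivers
\[
\sum_{i=0}^\infty \PP(Q_M(\varepsilon, a_i, b_i)) \leq (p+2)\cdot \psi\bigl(M(1+2/p), p+2\bigr) = \omega_M(\varepsilon).
\]

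The main obstacle I expect is the boundary arithmetic underlying the pointwise case analysis: one has to check that the calibration $p := \lceil 8M/\varepsilon \rceil$ simultaneously achieves $2w = 4M/p \leq \varepsilon/2$ (so that the pigeonhole argument actually produces a full subinterval within $[X_{a_i}(\omega), X_j(\omega)]$ in the interior case) and the alignment $M' - w = M$ (so that the extremal subintervals of $\mathcal{P}(M', p+2)$ coincide with the boundary strips $[\pm M, \pm M']$, ensuring that excursions outside $[-M, M]$ are captured by a boundary crossing). Once this calibration is confirmed and the two cases are separated cleanly, the rest of the argument is bookkeeping.
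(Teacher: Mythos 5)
Your proof is correct and follows essentially the same route as the paper's: the same partition $\mathcal{P}(M(1+2/p),p+2)$, the same triangle inequality at $X_{a_i}$, and the same pigeonhole over $p+2$ subintervals. The only difference is cosmetic: you conclude by directly summing $\EE\bigl[\sum_i C_i^{[\alpha,\beta]}\bigr]\leq\EE[\crsinf{[\alpha,\beta]}{X_n}]$ over subintervals, whereas the paper reaches the same bound via an ``assume $r<\sum_i\PP(A_i\cap B_i)$ and derive $r/(p+2)<\psi(\cdot)$'' contradiction; your version is if anything slightly cleaner, and your explicit treatment of the boundary case $X_j>M'$ (crossing $[M,M']$) spells out a step the paper leaves implicit.
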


\begin{proof}
Fix $\varepsilon>0$ and $a_0<b_0\leq a_1<b_1\leq \ldots$ and define the formula $A_i$ and $B_i$ for $i\in\NN$ by
\begin{equation*}
\begin{aligned}
A_i:=\exists k,l\in [a_i;b_i](|X_k-X_l|\geq \varepsilon) \ \ \ \mbox{and} \ \ \  B_i:=|X_{a_i}|\leq M.
\end{aligned}
\end{equation*}
Divide $[-M,M]$ into $p=\lceil 8M/\varepsilon\rceil$ equal subintervals, which we label $[\alpha_j,\beta_j]$ for $j=1,\ldots,p$, and add two further intervals of the same width on either side of $[-M,M]$, which we also label $[\alpha_j,\beta_j]$ for $j=0$ and $j=p+1$. In other words
\begin{equation*}
\{[\alpha_0,\beta_0],\ldots,[\alpha_{p+1},\beta_{p+1}]\}=\mathcal{P}(M(1+2/p),p+2).
\end{equation*}
These intervals must have width $\leq \varepsilon/4$. Suppose that $\omega\in A_i\cap B_i$, so that there exists $k(\omega),l(\omega)\in [a_i;b_i]$ with $|X_{k(\omega)}(\omega)-X_{l(\omega)}(\omega)|\geq \varepsilon$, and also $|X_{a_i}(\omega)|\leq M$.

Then by the triangle inequality, either $|X_{a_i}(\omega)-X_{k(\omega)}(\omega)|\geq \varepsilon/2$ or $|X_{a_i}(\omega)-X_{l(\omega)}(\omega)|\geq \varepsilon/2$. Since $X_{a_i}(\omega)\in [-M,M]$ and we have the additional intervals $[\alpha_0,\beta_0]$ and $[\alpha_{p+1},\beta_{p+1}]$, it follows that one of the intervals $[\alpha_j,\beta_j]$ for $j=0,\ldots,p+1$ is crossed by $\seq{X_n(\omega)}$ somewhere in $[a_i;b_i]$, and therefore defining
\begin{equation*}
T_{i,j}:=\mbox{$\seq{X_n}$ crosses $[\alpha_j,\beta_j]$ somewhere in $[a_i;b_i]$}
\end{equation*}
we have shown that
\begin{equation*}
A_i\cap B_i\subseteq \bigcup_{j=0}^{p+1} T_{i,j}.
\end{equation*}
Now suppose that $r<\sum_{i=0}^\infty \PP(A_i\cap B_i)$ for some $r>0$, which in particular means that for some $N\in\NN$ we have
\begin{equation*}
r<\sum_{i=0}^N \PP(A_i\cap B_i)\leq \sum_{i=0}^N \PP\left(\bigcup_{j=0}^{p+1} T_{i,j}\right)\leq \sum_{i=0}^N\sum_{j=0}^{p+1} \PP(T_{i,j})
\end{equation*}
and so there is some $j\in \{0,\ldots,p+1\}$ such that
\begin{equation*}
\begin{aligned}
&\frac{r}{p+2}<\sum_{i=0}^N \PP(T_{i,j})\leq \sum_{i=0}^\infty \PP(T_{i,j})=\sum_{i=0}^\infty \EE\left[I_{T_{i,j}}\right]\\
&=\EE\left[\sum_{i=0}^\infty I_{T_{i,j}}\right]\leq \EE\left[\crsinf{[\alpha_j,\beta_j]}{X_n}\right]\leq\psi\left(M\left(1+\frac{2}{p}\right),p+2\right)
\end{aligned}
\end{equation*}
and therefore
\begin{equation*}
\sum_{i=0}^\infty \PP(Q_M(\varepsilon,a_i,b_i))=\sum_{i=0}^\infty \PP(A_i\cap B_i)\leq (p+2)\cdot \psi\left(M\left(1+\frac{2}{p}\right),p+2\right)
\end{equation*}
and the result follows.
\end{proof}

We can now present our main result on uniform metastability for stochastic processes, in which integrability is now replaced by the more general assumption that $\seq{X_n}$ has a modulus of tightness.

\begin{theorem}
\label{res:crossings:metastability}
Let $\seq{X_n}$ be a stochastic process with modulus of $L_1$-crossings $\psi$. Let $\omega_M(\varepsilon)$ be defined in terms of $\psi$ as in Proposition \ref{res:fluctuations:crossings:unif} i.e.
\begin{equation*}
\omega_M(\varepsilon):=(p+2)\cdot\psi\left(M\left(1+\frac{2}{p}\right),p+2\right) \ \ \ \mbox{for } p:=\Bigl\lceil \frac{8M}{\varepsilon}\Bigr\rceil.
\end{equation*}
If $\seq{X_n}$ has modulus of tightness $h(\lambda)$ then $\seq{X_n}$ converges almost surely with learnable rate of uniform convergence given by:
	\begin{equation*}
	\phi(\lambda,\varepsilon):=\frac{2\omega_{h(\lambda/2)}(\varepsilon)}{\lambda}.
	\end{equation*}

\end{theorem}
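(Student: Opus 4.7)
The plan is to fix $\lambda,\varepsilon>0$ and an arbitrary interleaved sequence $a_0<b_0\leq a_1<b_1\leq \ldots$, then argue by contradiction, assuming that for every $n\leq \phi(\lambda,\varepsilon)$ we have
\[
\PP\left(\exists i,j\in [a_n;b_n](|X_i-X_j|\geq \varepsilon)\right)\geq \lambda,
\]
and deriving a violation of the bound in Proposition \ref{res:fluctuations:crossings:unif} with $M:=h(\lambda/2)$.

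The first step is to decompose, for each $n$, the event $\exists i,j\in [a_n;b_n](|X_i-X_j|\geq \varepsilon)$ according to whether $|X_{a_n}|\leq M$ or not. By the definition of modulus of tightness the latter has probability $<\lambda/2$, while the former is precisely $Q_M(\varepsilon,a_n,b_n)$ as defined in Proposition \ref{res:fluctuations:crossings:unif}. Therefore the assumption above forces
\[
\PP(Q_M(\varepsilon,a_n,b_n))>\lambda/2
\]
for every $n\leq \phi(\lambda,\varepsilon)$.

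The second step is to sum these lower bounds and invoke Proposition \ref{res:fluctuations:crossings:unif}, which yields
\[
(\lfloor \phi(\lambda,\varepsilon)\rfloor +1)\cdot \frac{\lambda}{2}\leq \sum_{i=0}^{\infty}\PP(Q_M(\varepsilon,a_i,b_i))\leq \omega_M(\varepsilon).
\]
Rearranging gives $\lfloor \phi(\lambda,\varepsilon)\rfloor+1\leq 2\omega_M(\varepsilon)/\lambda=\phi(\lambda,\varepsilon)$, which is impossible. Hence there must be some $n\leq \phi(\lambda,\varepsilon)$ with $\PP(\exists i,j\in [a_n;b_n](|X_i-X_j|\geq \varepsilon))<\lambda$, which is exactly what the definition of learnable rate of uniform convergence demands.

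The proof is largely bookkeeping because the heavy lifting—converting an $L_1$-crossing modulus into a uniform bound on $\sum_i \PP(Q_M(\varepsilon,a_i,b_i))$—is already packaged into Proposition \ref{res:fluctuations:crossings:unif}. The only subtlety, and the step most likely to go wrong if one is careless, is the decomposition via tightness: one must apply tightness at the index $a_n$ (not throughout $[a_n;b_n]$) so that the residual event lines up exactly with the form $Q_M(\varepsilon,a_n,b_n)$ to which the $L_1$-crossing proposition applies. Once that alignment is made, the factor of $2$ in $\phi(\lambda,\varepsilon)$ emerges naturally from splitting the target probability $\lambda$ evenly between the tightness and crossing contributions.
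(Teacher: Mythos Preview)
Your proof is correct and follows essentially the same approach as the paper. The paper argues directly (from $\sum_i \PP(Q_M(\varepsilon,a_i,b_i))\leq \omega_M(\varepsilon)$ it picks an $n\leq 2\omega_M(\varepsilon)/\lambda$ with $\PP(Q_M(\varepsilon,a_n,b_n))<\lambda/2$ and then adds back the tightness term), whereas you phrase the same pigeonhole step as a contradiction; the tightness decomposition at the index $a_n$ and the use of Proposition~\ref{res:fluctuations:crossings:unif} are identical.
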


\begin{proof}
We define $Q_M(\varepsilon,n,m)$ as in the proof of Proposition \ref{res:fluctuations:crossings:unif}. Fix $\lambda>0$ and define $M_\lambda:=h(\lambda/2)$. By Proposition \ref{res:fluctuations:crossings:unif}, for any $\varepsilon>0$ and $a_0<b_0\leq a_1<b_1\leq\ldots$ we have
	\begin{equation*}
	\sum_{i=0}^\infty \PP(Q_{M_\lambda}(\varepsilon,a_i,b_i))\leq \omega_{M_\lambda}(\varepsilon)
	\end{equation*}
    and so there exists some $n\leq 2\omega_{M_\lambda}(\varepsilon)/\lambda$ such that $Q_{M_\lambda}(\varepsilon,a_n,b_n)<\lambda/2$ i.e.
    \[
    \PP(\exists l,k\in [a_n;b_n](|X_l-X_k|\geq \varepsilon)\cap |X_{a_n}|\leq M_\lambda)<\frac{\lambda}{2}
    \]
	But then it follows that
	\begin{equation*}
	\PP(\exists k,l\in [a_n;b_n](|X_k-X_l|\geq \varepsilon)+\PP(|X_{a_n}|\leq M_\lambda)-1<\frac{\lambda}{2}
	\end{equation*}
	and therefore
	\begin{equation*}
	\begin{aligned}
	\PP(\exists k,l\in [a_n;b_n](|X_k-X_l|\geq \varepsilon)&<\frac{\lambda}{2}-\PP(|X_{a_n}|\leq M_\lambda)+1\\
	&=\frac{\lambda}{2}+\PP(|X_{a_n}|>M_\lambda)<\lambda
	\end{aligned}
	\end{equation*}
 which completes the proof.
\end{proof}

Before moving on to applications, we note that all of our abstract quantitative results can be reformulated in a slightly more precise sense by replacing all assumptions about the existence of moduli that work globally with the exact information that we need, which can then be more easily characterised as a `finitary' convergence principle in the sense of Tao \cite{tao:07:softanalysis}. This does not require any further work and can be read directly from the proofs. As an example, which will help make the discussion in the following section more transparent, we give such a reformulated version of Theorem \ref{res:crossings:metastability} above:

\begin{theorem}
    \label{res:crossings:metastability:reform}
    Let $\seq{X_n}$ be a stochastic process and fix $\lambda,\varepsilon>0$. Then whenever $L$ and $M$ satisfy
        \[
    \PP\left(|X_n|\geq M\right)<\frac{\lambda}{2} \ \ \ \mbox{for all $n\in\NN$}
    \]
    and for $p:=\lceil 8M/\varepsilon\rceil$
    \[
    \EE\left[\crsinf{[\alpha,\beta]}{X_n}\right]\leq L \ \ \ \mbox{for all }[\alpha,\beta]\in\mathcal{P}\left(M\left(1+\frac{2}{p}\right),p+2\right) 
    \]
    then for any $a_0<b_0\leq a_1<b_1\leq \ldots$ we have
    \[
    \PP\left(\exists k,l\in [a_n;b_n]\, (|X_k-X_l|\geq \varepsilon)\right)<\lambda \ \ \ \mbox{for some }n\leq \frac{2(p+2)L}{\lambda}.
    \]
\end{theorem}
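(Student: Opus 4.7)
The plan is to essentially re-read the proof of Theorem \ref{res:crossings:metastability} with the finitary data in place of the moduli, since the present statement is really just a repackaging of that result where we have identified exactly which instances of the moduli are consumed. In particular, $M$ plays the role of $h(\lambda/2)$ and $L$ plays the role of $\psi(M(1+2/p), p+2)$, so by inspecting the proof of Proposition \ref{res:fluctuations:crossings:unif} we see that no additional values of $h$ or $\psi$ are ever used.

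First I would recall the formula
\[
Q_M(\varepsilon,n,m) := (\exists k,l\in [n;m](|X_k-X_l|\geq \varepsilon)) \cap (|X_n|\leq M)
\]
and observe that the proof of Proposition \ref{res:fluctuations:crossings:unif} really only invokes the modulus $\psi$ at the intervals belonging to $\mathcal{P}(M(1+2/p), p+2)$. So under our assumption $\EE[\crsinf{[\alpha,\beta]}{X_n}]\leq L$ on these finitely many intervals, the same pigeonhole/union-bound argument yields
\[
\sum_{i=0}^\infty \PP(Q_M(\varepsilon,a_i,b_i))\leq (p+2)\cdot L
\]
for any $a_0<b_0\leq a_1<b_1\leq\ldots$, with $p=\lceil 8M/\varepsilon\rceil$ as stated.

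Next I would apply the averaging step from the proof of Theorem \ref{res:crossings:metastability}: since the sum over $i$ of $\PP(Q_M(\varepsilon,a_i,b_i))$ is bounded by $(p+2)L$, there must exist some index $n\leq 2(p+2)L/\lambda$ for which
\[
\PP(Q_M(\varepsilon,a_n,b_n)) < \frac{\lambda}{2},
\]
for otherwise summing over the first $\lceil 2(p+2)L/\lambda\rceil+1$ terms would already exceed $(p+2)L$.

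Finally I would unpack $Q_M$ by the elementary inequality $\PP(A\cap B)\geq \PP(A)+\PP(B)-1$. Using that $\PP(|X_{a_n}|\leq M)>1-\lambda/2$ by the tightness assumption, this gives
\[
\PP(\exists k,l\in [a_n;b_n](|X_k-X_l|\geq \varepsilon)) < \frac{\lambda}{2} + \PP(|X_{a_n}|>M) < \lambda,
\]
which is the desired conclusion. There is no real obstacle here beyond careful bookkeeping; the value of the reformulation is that it makes explicit that one only ever needs crossings bounds at the $p+2$ specific partition intervals determined by $M$ and $\varepsilon$, a fact we will exploit in the applications of Section \ref{sec:applications}.
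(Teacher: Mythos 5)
Your proof is correct and matches the paper's intent exactly: the paper explicitly says Theorem \ref{res:crossings:metastability:reform} ``does not require any further work and can be read directly from the proofs'' of Proposition \ref{res:fluctuations:crossings:unif} and Theorem \ref{res:crossings:metastability}, which is precisely the tracing-through of the moduli that you carry out. The bookkeeping in each of your three steps (the union/pigeonhole bound yielding $\sum_i \PP(Q_M(\varepsilon,a_i,b_i))\leq (p+2)L$, the averaging step, and the $\PP(A\cap B)\geq\PP(A)+\PP(B)-1$ unpacking) is accurate.
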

We then reobtain Theorem \ref{res:crossings:metastability} from Theorem \ref{res:crossings:metastability:reform} by making the substitutions 
\[
L:=\psi\left(M\left(1+\frac{2}{p}\right),p+2\right) \ \ \ \mbox{and} \ \ \ M:=f\left(\frac{\lambda}{2}\right)
\]

\section{Applications}
\label{sec:applications}

We now present a series of examples demonstrating how the quantitative analysis of crossing inequalities can be used to obtain concrete metastable rates of uniform convergence for stochastic processes, each following from the Theorem \ref{res:crossings:metastability} in combination with known crossing inequalities. In everything that follows we freely use the obvious fact that if $\phi_1(\lambda,\varepsilon)$ is a learnable rate of uniform convergence and $\phi_1(\lambda,\varepsilon)\leq \phi_2(\lambda,\varepsilon)$ for all $\lambda,\varepsilon>0$, then $\phi_2(\lambda,\varepsilon)$ is also a learnable rate of uniform convergence. This allows us to simplify our rates throughout. First, we note a very simple yet useful instance Theorem \ref{res:crossings:metastability}:
\begin{theorem}
\label{res:unif:monotone}
Suppose that the stochastic process $\seq{X_n}$ is almost surely monotone and has a modulus of tightness $h(\lambda) \in [1,\infty)$, for all $\lambda > 0$. Then $\seq{X_n}$ has a learnable rate of uniform convergence given by:
\begin{equation*}
\phi(\lambda,\varepsilon):=\frac{c}{\lambda\varepsilon}\cdot h\left(\frac{\lambda}{2}\right)
\end{equation*}
for a universal constant $c\leq 22$. In the special case that $\sup_{n\in\NN}\norm{X_n}_\infty<K$, for some $K \ge 1$, the rate becomes
\begin{equation*}
\phi(\lambda,\varepsilon):=\frac{cK}{\lambda\varepsilon}.
\end{equation*}
\end{theorem}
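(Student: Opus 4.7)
The plan is to invoke Theorem \ref{res:crossings:metastability} after observing that almost sure monotonicity provides a trivial modulus of $L_1$-crossings. Indeed, for any fixed $\omega$ in the (full-measure) event where $\seq{X_n(\omega)}$ is monotone, the sample path crosses any interval $[\alpha,\beta]$ at most once, so $\crsinf{[\alpha,\beta]}{X_n(\omega)}\leq 1$ almost surely, hence
\[
\EE\left[\crsinf{[\alpha,\beta]}{X_n}\right]\leq 1
\]
for every $\alpha<\beta$. Therefore the constant function $\psi(M,l):=1$ serves as a modulus of $L_1$-crossings for $\seq{X_n}$.

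Feeding this into the definition of $\omega_M(\varepsilon)$ from Proposition \ref{res:fluctuations:crossings:unif}, we obtain $\omega_M(\varepsilon)=p+2$ for $p:=\lceil 8M/\varepsilon\rceil$. Theorem \ref{res:crossings:metastability} then tells us that, with $M:=h(\lambda/2)$,
\[
\phi_0(\lambda,\varepsilon):=\frac{2(p+2)}{\lambda} \ \ \ \mbox{for} \ \ p=\Bigl\lceil \frac{8h(\lambda/2)}{\varepsilon}\Bigr\rceil
\]
is a learnable rate of uniform convergence for $\seq{X_n}$.

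It remains only to simplify. Using $p\leq 8h(\lambda/2)/\varepsilon+1$, we get $2(p+2)\leq 16h(\lambda/2)/\varepsilon+6$. Since $\varepsilon\in(0,1)$ and $h(\lambda/2)\geq 1$, we have $h(\lambda/2)/\varepsilon\geq 1$, so $6\leq 6h(\lambda/2)/\varepsilon$, giving $2(p+2)\leq 22h(\lambda/2)/\varepsilon$. Hence
\[
\phi_0(\lambda,\varepsilon)\leq \frac{22}{\lambda\varepsilon}\cdot h\left(\frac{\lambda}{2}\right)
\]
so the displayed $\phi(\lambda,\varepsilon)$ with $c\leq 22$ is also a learnable rate of uniform convergence. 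For the special case $\sup_{n\in\NN}\norm{X_n}_\infty<K$ with $K\geq 1$, the constant function $h\equiv K$ is a modulus of tightness (indeed $\PP(|X_n|\geq K)=0<\lambda$ for all $\lambda>0$) satisfying the required condition $h(\lambda)\in[1,\infty)$, and substituting yields the claimed rate $cK/(\lambda\varepsilon)$. There is no real obstacle here; the only subtlety is the bookkeeping needed to absorb the additive constants into the multiplicative bound, which relies on the standing assumptions $\varepsilon\in(0,1)$ and $h\geq 1$.
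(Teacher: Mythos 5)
Your proof is correct and follows essentially the same route as the paper: both observe that monotonicity yields $\EE[\crsinf{[\alpha,\beta]}{X_n}]\leq 1$ so that $\psi\equiv 1$ is a modulus of $L_1$-crossings, apply Theorem \ref{res:crossings:metastability}, and absorb the additive constants using $h(\lambda/2)/\varepsilon\geq 1$ to reach the bound $22h(\lambda/2)/(\lambda\varepsilon)$. The only cosmetic difference is that the paper records the intermediate estimate $\omega_M(\varepsilon)\leq 11M/\varepsilon$ before dividing by $\lambda/2$, while you carry the $2/\lambda$ factor through; the arithmetic is the same.
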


\begin{proof}
Since $\seq{X_n(\omega)}$ is almost surely monotone, we have $\EE(\crsinf{[\alpha,\beta]}{X_n})\leq 1$ for any $\alpha<\beta$, and thus a modulus of uniformly bounded crossings is given by the constant function $\psi(M,l)=1$. The result follows from Theorem \ref{res:crossings:metastability}, noting that in this case
\begin{equation*}
\omega_M(\varepsilon)=\Bigl\lceil \frac{8M}{\varepsilon}\Bigr\rceil+2\leq \frac{11M}{\varepsilon}
\end{equation*}
and therefore
\[
\frac{2\omega_{h(\lambda/2)}(\varepsilon)}{\lambda}\leq \frac{22}{\lambda\varepsilon}\cdot h\left(\frac{\lambda}{2}\right)
\]
and we are done.
\end{proof}
Example \ref{ex:uniform:stronger:pointwise} shows that, particularly for the special case $\sup_{n\in\NN}\norm{X_n}_\infty<K$, the bound on Theorem \ref{res:unif:monotone} above is optimal. We now show how our framework applies to a much more interesting class of stochastic processes.

\subsection{Martingale convergence}
\label{sec:applications:doob}

The most well-known crossing inequalities apply to various classes of martingales. We are therefore able to provide learnable rates of uniform convergence in such cases. Obtaining basic uniform rates of metastability for martingales follows in a straightforward way from Theorem \ref{res:crossings:metastability} (which, as we have mentioned, adapts the argument given in \cite[Section 4]{avigad-gerhardy-towsner:10:local} to the $L_1$-setting). However, optimizing these rates is more subtle, and here, a careful choice of the relevant upcrossing inequality is crucial. Because quantitative martingale convergence theorems are an important stepping stone for any future applications of proof mining on stochastic processes, we take particular care in providing best-possible rates. Despite the large body of work done on the computability theory of martingales \cite{hoyrup-rute:21:algorithmic:randomness,rute:etal:algorithmic:doob}, to the best of the authors' knowledge, explicit metastable rates for martingales have never been given, and more importantly, our framework extends to a much broader class of stochastic processes, including almost-supermartingales, as illustrated in Section \ref{sec:applications:almost}.

Let $\seq{\mathcal{F}_n}$ be a filtration with respect to $(\Omega,\mathcal{F},\PP)$ i.e. $\mathcal{F}_0\subseteq \mathcal{F}_1\subseteq \ldots\subseteq \mathcal{F}$ and suppose that $\seq{X_n}$ is adapted to $\seq{\mathcal{F}_n}$. Recall that $\seq{X_n}$ a martingale if for all $n\in\NN$:
\begin{enumerate}[(i)]

	\item $\EE(|X_n|)<\infty$\smallskip
	
	\item $\EE[X_{n+1}\, | \, \mathcal{F}_n]= X_n$ almost surely.\smallskip

\end{enumerate}
It is a submartingale (respectively supermartingale) if the equality in condition (ii) above is weakened to $\geq$ (respectively $\leq$). Doob's classic martingale convergence theorems \cite{doob:53:stochastic} states that $L_1$-bounded sub- or supermartingales converge almost surely to a random variable that is finite almost surely. Our first main result below is a quantitative version of this theorem in the special case of nonnegative submartingales, which we can then extend to obtain rates of the same complexity (up to a constant) for general sub- or supermartingales by applying various decomposition theorems. Furthermore, we incorporate an improvement that holds when we can assume stronger boundedness conditions on the $p$th moment. In what follows, for a random variable $X$, write $\norm{X}_p:= \EE(|X|^p)^{1/p}$ for the standard $L_p$ norm.

\begin{theorem}[Quantitative positive submartingale convergence theorem]
\label{res:unif:doob}
Let $\seq{X_n}$ be a nonnegative submartingale, $p\in [1,\infty]$, and suppose that $K\ge1$ is such that
\begin{equation*}
\sup_{n\in\NN}\norm{X_n}_p< K.
\end{equation*}
Then $\seq{X_n}$ has learnable rate of uniform convergence given by:
\begin{equation*}
 \phi_p(\lambda,\varepsilon):=\frac{cK^2}{\lambda \varepsilon^2}\cdot \left(\frac{2}{\lambda}\right)^{1/p}
\end{equation*}
for $p\in [1,\infty)$ and in the case $p=\infty$ we have a rate given by
\begin{equation*}
\phi_\infty(\lambda,\varepsilon):=\frac{cK^2}{\lambda \varepsilon^2}
\end{equation*}
for a universal constant $c\leq 220$ (and independent of $p$).
\end{theorem}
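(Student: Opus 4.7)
The plan is to instantiate the abstract Theorem \ref{res:crossings:metastability:reform}, which requires two inputs: a bound $M$ witnessing tightness at level $\lambda/2$, and a bound $L$ on the expected crossings over every interval in $\mathcal{P}\!\left(M(1+\tfrac{2}{p_\ast}),p_\ast+2\right)$ where $p_\ast:=\lceil 8M/\varepsilon\rceil$. Once these are in place, the theorem delivers a learnable rate with $n\le 2(p_\ast+2)L/\lambda$, and the rest is arithmetic.

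For tightness, I would invoke Markov's inequality applied to $|X_n|^p$ (reading $p=\infty$ as the essential-sup bound), which gives $\PP(|X_n|\ge K(2/\lambda)^{1/p})<\lambda/2$ uniformly in $n$. Thus I may take
\[
M := K\left(\frac{2}{\lambda}\right)^{1/p},
\]
with the convention $(2/\lambda)^{1/\infty}=1$, which already matches the $(2/\lambda)^{1/p}$ factor in the claimed rate and handles the two extreme cases uniformly. In particular $M\ge K\ge 1$.

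For the crossing bound, I exploit nonnegativity of $\seq{X_n}$: for any interval $[\alpha,\beta]$ with $\alpha<0$ the process cannot cross it (since $X_n\ge 0$), so such intervals contribute $0$. For $\alpha\ge 0$, Doob's upcrossing inequality gives $\EE[\upcrinf{[\alpha,\beta]}{X_n}]\le \sup_n\EE[(X_n-\alpha)^+]/(\beta-\alpha)\le K/(\beta-\alpha)$ because $(X_n-\alpha)^+\le X_n$. Using Remark \ref{rem:upcr} to pass from upcrossings to crossings, I obtain the uniform bound
\[
\EE\bigl[\crsinf{[\alpha,\beta]}{X_n}\bigr]\le \frac{2K}{\beta-\alpha}+1
\]
for every $[\alpha,\beta]\in\mathcal{P}(M(1+2/p_\ast),p_\ast+2)$. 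Since these intervals have width $2M/p_\ast$, this gives $L\le Kp_\ast/M+1$.

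The final step is just to combine and simplify. Using $p_\ast\le 8M/\varepsilon+1$, $\varepsilon<1$, $M\ge K\ge 1$, and rough estimates $p_\ast+2\le 11M/\varepsilon$ and $L\le 10K/\varepsilon$, Theorem \ref{res:crossings:metastability:reform} yields a learnable rate of uniform convergence bounded by
\[
\frac{2(p_\ast+2)L}{\lambda}\le \frac{220\,MK}{\lambda\varepsilon^2}=\frac{220\,K^2}{\lambda\varepsilon^2}\left(\frac{2}{\lambda}\right)^{1/p},
\]
proving the theorem with $c\le 220$. The only real care needed is in bookkeeping the constants so that the same $c$ works uniformly for all $p\in[1,\infty]$; the $p\to 1$ case is the tightest, while $p=\infty$ drops the extra $(2/\lambda)^{1/p}$ factor automatically from the tightness step. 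No subtle obstacle arises once the observation about $\alpha<0$ intervals is made, which avoids any dependence of $L$ on $|\alpha|$ and keeps the constant independent of $p$.
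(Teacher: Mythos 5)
Your proof is correct and takes essentially the same approach as the paper: Markov's inequality on $|X_n|^p$ for the tightness modulus $M=K(2/\lambda)^{1/p}$, Doob's upcrossing inequality with the observation that nonnegativity eliminates $\alpha<0$ intervals, Remark \ref{rem:upcr} to pass to crossings, and then the arithmetic via Theorem \ref{res:crossings:metastability:reform} (the paper instead directly plugs into Theorem \ref{res:crossings:metastability}, which it notes is equivalent). The constant-tracking is identical and yields $c\le 220$ in both.
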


\begin{proof}
Let $\alpha<\beta$. Doob's well-known upcrossing inequality (see, e.g.\ \cite{doob1961notes}) states that for all $N \in \NN$,
\begin{equation*}
\EE(\upcr{N}{[\alpha,\beta]}{X_n})\leq \frac{\EE((X_N - \alpha)^+)}{\beta-\alpha}.
\end{equation*}
Now since $\seq{X_n}$ is nonnegative, $\upcr{N}{[\alpha,\beta]}{X_n} = 0$ if $\alpha < 0$, and if $\alpha\geq 0$ then $(X_N-\alpha)^+\leq X_N$ and thus
\begin{equation*}
\EE(\upcr{N}{[\alpha,\beta]}{X_n})\le \frac{\EE(X_N)}{\beta-\alpha} \le \frac{\sup_{n\in\NN}\EE(|X_n|)}{\beta-\alpha}\leq \frac{\sup_{n\in\NN}\norm{X_n}_p}{\beta-\alpha}<\frac{K}{\beta-\alpha}
\end{equation*}
which implies 
\begin{equation*}
\EE(\upcrinf{[\alpha,\beta]}{X_n})\leq \frac{K}{\beta-\alpha}.
\end{equation*}
Referring to Remark \ref{rem:upcr} this implies that
\begin{equation*}
\EE(\crsinf{[\alpha,\beta]}{X_n})\leq \frac{2K}{\beta-\alpha} + 1.
\end{equation*}
Now observing that for any $[\alpha,\beta]\in \mathcal{P}(M,l)$ we have $\beta-\alpha=2M/l$, it follows that
\begin{equation*}
\EE(\crsinf{[\alpha,\beta]}{X_n})\leq \frac{lK}{M} + 1
\end{equation*}
and thus $\psi(M,l):=lK/M+1$ is a modulus of crossings for $\seq{X_n}$. Now if $p\in [1,\infty)$ then
\begin{equation*}
\PP(|X_n|\geq N)\leq \frac{\EE(|X_n|^p)}{N^p}< \left(\frac{K}{N}\right)^p
\end{equation*}
and so $h(\lambda):=K\lambda^{-1/p}$ is a modulus of tightness for $\seq{X_n}$. Applying Theorem \ref{res:crossings:metastability} (b), setting $q:=\lceil 8M/\varepsilon\rceil \le 9M/\varepsilon$ and assuming for now that $M \ge 1$, we have
\begin{equation*}
\omega_M(\varepsilon)= \frac{(q+2)^2K}{M(1+2/q)} + (q+2)\leq \frac{11\cdot 9\cdot MK}{\varepsilon^2} + \frac{11M}{\varepsilon} \le \frac{11\cdot 10\cdot MK}{\varepsilon^2}
\end{equation*}
where we use that $K \ge 1$ and $\varepsilon < 1$ to get the last inequality. Instantiating $M:=h(\lambda/2)=K(2/\lambda)^{1/p}\geq 1$ we have,
\begin{equation*}
\frac{2\omega_{h(\lambda/2)}(\varepsilon)}{\lambda}= \frac{2\cdot 11\cdot 10K^2}{\lambda\varepsilon^2}\left(\frac{2}{\lambda}\right)^{1/p}
\end{equation*}
 from which the main part of the result follows. On the other hand, for $p=\infty$ we have
\begin{equation*}
\PP(|X_n|\geq K)=0
\end{equation*}
and thus $h(\lambda):=K$ is a modulus of tightness, and so the above calculations can be simplified to give the stated rate in that case.
\end{proof}

Doob's upcrossing inequalities hold for general (not necessarily positive) sub- or supermartingales, and so one could adapt the proof of Theorem \ref{res:unif:doob} directly to obtain learnable rates in those cases. However, in the nonnegative case, we would have to handle upcrossings where $\alpha<0$, in which we would only have $(X_N-\alpha)^+\leq |X_N|+|\alpha|$ and accordingly
\[
\EE(\crsinf{[\alpha,\beta]}{X_n})\leq \frac{2(K+|\alpha|)}{\beta-\alpha}+1\leq \frac{l(K+M)}{M}+1
\]
which due to the fact that $M$ dominates $K$ in the subsequent calculation results in a worse bound of $(cK^2/\lambda\varepsilon^2)(2/\lambda)^{2/p}$. This somewhat superficial complication can be circumvented in one of two ways: We can either prove an alternative crossing inequality (as we do for positive almost-supermartingales in Remark \ref{rem:downcrossing}, where symmetric downcrossing inequalities work well), or we can make use of standard decomposition theorems for martingales, exploiting the fact that learnable rates compose well for sums of stochastic processes as follows:

\begin{lemma}
\label{res:sum}
Let $\seq{X_n}$ and $\seq{Y_n}$ be stochastic processes with learnable rates of uniform convergence $\phi_1$ and $\phi_2$ respectively. Then $\seq{X_n+Y_n}$ has a learnable rate of uniform convergence given by,
\begin{equation*}
\phi(\lambda,\varepsilon):= \phi_1(\lambda/2,\varepsilon/2) + \phi_2(\lambda/2,\varepsilon/2)
\end{equation*}
\end{lemma}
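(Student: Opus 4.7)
The plan is to unpack the definition of a learnable rate of uniform convergence (Definition \ref{def:quantitative:almostsureconvergence:learnable}(a)), apply the triangle inequality to split the combined oscillation event into oscillations for $\seq{X_n}$ and $\seq{Y_n}$ separately, and then use a pigeonhole-style counting argument to find a single index $n$ that works for both processes simultaneously.

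More concretely, fix $\lambda,\varepsilon>0$ and any $a_0<b_0\leq a_1<b_1\leq \ldots$, and write $N_1:=\phi_1(\lambda/2,\varepsilon/2)$, $N_2:=\phi_2(\lambda/2,\varepsilon/2)$. For each $n$ set
\[
A_n:=\{\exists i,j\in[a_n;b_n]\, (|X_i-X_j|\geq \varepsilon/2)\}, \ \ \ B_n:=\{\exists i,j\in[a_n;b_n]\, (|Y_i-Y_j|\geq \varepsilon/2)\}.
\]
Since $|(X_i+Y_i)-(X_j+Y_j)|\geq \varepsilon$ implies at least one of $|X_i-X_j|\geq\varepsilon/2$ or $|Y_i-Y_j|\geq \varepsilon/2$, the event $\{\exists i,j\in[a_n;b_n]\, (|(X_i+Y_i)-(X_j+Y_j)|\geq \varepsilon)\}$ is contained in $A_n\cup B_n$. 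Hence it suffices to exhibit some $n\leq N_1+N_2$ for which $\PP(A_n)<\lambda/2$ and $\PP(B_n)<\lambda/2$ simultaneously, as the union bound then yields the desired $\PP(\ldots)<\lambda$.

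To produce such an $n$, the plan is to bound the number of ``bad'' indices for each process separately. Set $I_X:=\{n\, \mid\, \PP(A_n)\geq \lambda/2\}$ and $I_Y:=\{n\, \mid\, \PP(B_n)\geq \lambda/2\}$. I claim $|I_X|\leq N_1$: if, toward a contradiction, $I_X$ contained distinct indices $m_0<m_1<\ldots<m_{N_1}$, then the sub-sequence $a'_k:=a_{m_k}$, $b'_k:=b_{m_k}$ for $k\leq N_1$ (extended to an infinite strictly increasing sequence arbitrarily, say by $a'_k:=b'_{k-1}+1$, $b'_k:=a'_k+1$ for $k>N_1$) is a valid input to the learnability hypothesis for $\seq{X_n}$, which forces some $k\leq N_1$ with $\PP(\exists i,j\in[a'_k;b'_k]\, (|X_i-X_j|\geq \varepsilon/2))<\lambda/2$, contradicting $m_k\in I_X$. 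Symmetrically $|I_Y|\leq N_2$, so $|I_X\cup I_Y|\leq N_1+N_2$, and among the $N_1+N_2+1$ indices $\{0,1,\ldots,N_1+N_2\}$ at least one lies outside $I_X\cup I_Y$, delivering the required $n$.

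The only slightly delicate step is the pigeonhole bound $|I_X|\leq N_1$, which depends on correctly extending any finite list of bad indices to an infinite strictly increasing sequence so that the learnability definition can be invoked; once this is observed, the rest of the argument is a direct union bound. I do not expect any hidden measurability issues, since the events $A_n,B_n$ are finite unions of measurable events.
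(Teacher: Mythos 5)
Your proposal is correct and follows essentially the same route as the paper: triangle inequality to split the combined oscillation event, a bound on the number of ``bad'' indices for each process obtained by passing the finite list of bad blocks through the learnability hypothesis (after extending to an infinite increasing sequence), and a pigeonhole/union argument to find a common good index $n\leq \phi_1(\lambda/2,\varepsilon/2)+\phi_2(\lambda/2,\varepsilon/2)$. The paper phrases this as a proof by contradiction (assuming all $n\leq\phi(\lambda,\varepsilon)$ are bad for the sum and extracting a contradicting subsequence for one of $\phi_1,\phi_2$), whereas you isolate the cardinality bounds $|I_X|\leq N_1$, $|I_Y|\leq N_2$ as explicit sub-claims; the underlying combinatorics and the use of the learnability definition on an extended subsequence are identical.
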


\begin{proof}
    Fixing $\varepsilon, \lambda \in (0,1)$ and $a_0<b_0\leq a_1<b_1\leq \ldots$, suppose for contradiction that for all $n\le \phi(\lambda,\varepsilon)$ we have
\begin{equation*}
    \label{eq:sum0}
    \PP(\exists i,j\in [a_n;b_n](|X_i+Y_i-X_j-Y_j|\geq \varepsilon))\geq \lambda.
\end{equation*}
For any $\omega\in\Omega$, if there exists $i(\omega),j(\omega)\in [a_n;b_n]$ such that $|X_{i(\omega)}(\omega)+Y_{i(\omega)}-X_{j(\omega)}-Y_{j(\omega)}|\geq \varepsilon$, by the triangle inequality we must have either $|X_{i(\omega)}-X_{j(\omega)}|\geq \varepsilon/2$ or $|Y_{i(\omega)}-Y_{j(\omega)}|\geq \varepsilon/2$. In other words, for each $n\leq \phi(\lambda,\varepsilon)$, we have
\begin{equation*}
\begin{aligned}
    \lambda&\leq \PP(\exists i,j\in [a_n;b_n](|X_i-X_j|\geq \varepsilon/2)\cup \exists i,j\in [a_n;b_n](|Y_i-Y_j|\geq \varepsilon/2))\\
    &\leq \PP(\exists i,j\in [a_n;b_n](|X_i-X_j|\geq \varepsilon/2))+\PP( \exists i,j\in [a_n;b_n](|Y_i-Y_j|\geq \varepsilon/2))
\end{aligned}
\end{equation*}
and so again, by the triangle inequality, we have either
\[
\PP(\exists i,j\in [a_n;b_n](|X_i-X_j|\geq \varepsilon/2))\geq \lambda/2
\]
or
\[
\PP(\exists i,j\in [a_n;b_n](|Y_i-Y_j|\geq \varepsilon/2))\geq \lambda/2
\]
for each $n\leq \phi(\lambda,\varepsilon)$. But then it follows that either there exists a subsequence $a_{n_0}<b_{n_0}\leq a_{n_1}<b_{n_1}\leq \ldots $ such that
\[
\forall k\leq \phi_1(\lambda/2,\varepsilon/2)\left(\PP(\exists i,j\in [a_{n_k};b_{n_k}](|X_i-X_j|\geq \varepsilon/2))\geq \lambda/2\right)
\]
or a subsequence $a_{m_0}<b_{m_0}\leq a_{m_1}<b_{m_1}\leq \ldots $ such that
\[
\forall k\leq \phi_2(\lambda/2,\varepsilon/2)\left(\PP(\exists i,j\in [a_{m_k};b_{m_k}](|Y_i-Y_j|\geq \varepsilon/2))\geq \lambda/2\right)
\]
which contradict the defining property of $\phi_1$ and $\phi_2$ respectively.
\end{proof}

\begin{theorem}[Quantitative Doob's convergence theorem]
\label{res:unif:subdoob}
Let $\seq{X_n}$ be a sub- or super martingale and suppose that $K\ge1$ is such that
\begin{equation*}
\sup_{n\in\NN}\EE(|X_n|)< K.
\end{equation*}
Then $\seq{X_n}$ has learnable rate of uniform convergence given by:
\begin{equation*}
 \phi(\lambda,\varepsilon):=c\left(\frac{K}{\lambda \varepsilon}\right)^2
\end{equation*}
for a universal constants $c\leq 2^{11}\cdot 3^2\cdot c_1$ for $c_1>0$ as in Theorem \ref{res:unif:doob}.
\end{theorem}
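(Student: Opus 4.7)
The plan is to reduce the general sub/supermartingale case to the nonnegative submartingale case of Theorem \ref{res:unif:doob} using standard decomposition theorems, then combine the resulting learnable rates with Lemma \ref{res:sum}. First, since $\seq{-X_n}$ is a submartingale whenever $\seq{X_n}$ is a supermartingale (and negation preserves the bound on $\sup_n\EE|X_n|$, as well as the Cauchy property in the sense required here), we may assume without loss of generality that $\seq{X_n}$ is a submartingale.

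Next, I would invoke the Doob decomposition to write $X_n = M_n + A_n$ where $\seq{M_n}$ is a martingale with $M_0 = X_0$ and $\seq{A_n}$ is a predictable, nonnegative, nondecreasing process with $A_0 = 0$. From $\EE[A_n] = \EE[X_n] - \EE[X_0] \leq 2K$ we obtain $\sup_n\EE[A_n] \leq 2K$, and then $\sup_n \EE|M_n| \leq \sup_n \EE|X_n| + \sup_n \EE[A_n] \leq 3K$. I then apply the Krickeberg decomposition to $\seq{M_n}$ to write $M_n = Y_n - Z_n$ where $\seq{Y_n}, \seq{Z_n}$ are nonnegative martingales satisfying $\sup_n \EE[Y_n], \sup_n \EE[Z_n] \leq \sup_n \EE|M_n| \leq 3K$. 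So $X_n = Y_n + (-Z_n) + A_n$ is expressed as a sum of three stochastic processes, each of which we can handle quantitatively.

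For $\seq{Y_n}$ and $\seq{Z_n}$, Theorem \ref{res:unif:doob} applied at $p = 1$ yields learnable uniform rates bounded by $2c_1 (3K)^2/(\lambda^2\varepsilon^2) = 18c_1 K^2/(\lambda^2\varepsilon^2)$ each; a learnable rate for $\seq{-Z_n}$ is the same as for $\seq{Z_n}$ since the modulus only depends on $|Y_i - Y_j|$. For $\seq{A_n}$, nonnegativity and monotonicity together with Markov's inequality (Lemma \ref{res:markov:tightness}) provide a modulus of tightness $h(\lambda) = 2K/\lambda$, and Theorem \ref{res:unif:monotone} then gives a learnable uniform rate bounded by $22 \cdot h(\lambda/2)/(\lambda\varepsilon) = 88K/(\lambda^2\varepsilon)$. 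Finally, I apply Lemma \ref{res:sum} twice (first combining $Y_n$ with $-Z_n$ to cover $M_n$, then combining $M_n$ with $A_n$) and absorb the resulting factors of $2^{-1}$ in $\lambda$ and $\varepsilon$ into the constants; using $K\geq 1$ and $\varepsilon < 1$ to dominate the $K/(\lambda^2\varepsilon)$ term by a $K^2/(\lambda^2\varepsilon^2)$ term, this bundles everything into the claimed rate $c(K/\lambda\varepsilon)^2$ with a constant of the stated order.

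The only substantive obstacle is bookkeeping: verifying that the Doob and Krickeberg decompositions preserve $L_1$-boundedness with the stated constants and tracking how the halvings in Lemma \ref{res:sum} propagate through two applications. The factor $2^{11}\cdot 3^2$ in the constant $c$ transparently reflects, respectively, the powers of $2$ arising from invoking Lemma \ref{res:sum} twice (together with the $(2/\lambda)^{1/p}$ factor from Theorem \ref{res:unif:doob} at $p=1$) and the factor of $3$ from the $L_1$-bound on $\seq{M_n}$ (which enters squared through Theorem \ref{res:unif:doob}).
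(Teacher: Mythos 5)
Your proposal is correct and follows the same overall template as the paper's proof (reduce to submartingale, Doob-decompose $X_n=M_n+A_n$, treat $A_n$ via Theorem \ref{res:unif:monotone} with tightness modulus $2K/\lambda$, reduce the martingale part to Theorem \ref{res:unif:doob} via a further decomposition into nonnegative pieces, then combine with Lemma \ref{res:sum}). The one genuine difference is how the $L_1$-bounded martingale $\seq{M_n}$ is split into nonnegative processes. The paper simply writes $M_n=M_n^+-M_n^-$, observing that $x\mapsto x^+$ and $x\mapsto x^-$ are convex, so $\seq{M_n^+}$ and $\seq{M_n^-}$ are nonnegative \emph{submartingales} with $\sup_n\EE(M_n^\pm)\leq\sup_n\EE|M_n|<3K$; this is a purely pointwise, completely elementary decomposition. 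You instead invoke the Krickeberg decomposition $M_n=Y_n-Z_n$ into nonnegative \emph{martingales}. This works — and gives the identical bound $\sup_n\EE[Y_n],\sup_n\EE[Z_n]\leq 3K$, hence the same constant — since Theorem \ref{res:unif:doob} applies to nonnegative submartingales, a class containing both. But the Krickeberg decomposition is itself a nontrivial result (its construction builds $Y_n$ as the a.s.\ limit of the increasing sequence $\EE[M_m^+\mid\mathcal{F}_n]$ and requires conditional monotone convergence to verify it is a martingale), whereas the positive/negative-part split is immediate and better in the spirit of the paper's aim of keeping the extraction of rates as transparent and low-tech as possible. The extra strength that Krickeberg provides — the pieces being actual martingales rather than submartingales — is not used, so the paper's simpler choice is preferable; otherwise the two arguments agree in every quantitative detail.
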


\begin{proof}
    We can assume w.l.o.g. that $\seq{X_n}$ is a submartingale, since if $\seq{X_n}$ is a supermartingale, then $\seq{-X_n}$ is a submartingale which must converge with the same learnable rate. Let $X_n=M_n+A_n$ be the usual Doob decomposition in this case, i.e.
    \[
    M_n:=X_0+\sum_{i=1}^n\left(X_i-\EE[X_i\mid\mathcal{F}_{i-1}]\right) \ \ \ \mbox{and} \ \ \ A_n:=\sum_{i=1}^n\left(\EE[X_i\mid\mathcal{F}_{i-1}]-X_{i-1}\right).
    \]
		where it is easy to show that $\seq{M_n}$ is a martingale and $\seq{A_n}$ is almost surely nonnegative and increasing. Then by Lemma \ref{res:sum} $\seq{X_n}$ has learnable rate of uniform convergence $\phi_1(\lambda/2,\varepsilon/2)+\phi_2(\lambda/2,\varepsilon/2)$ where $\phi_1$ and $\phi_2$ are learnable rates for $\seq{M_n}$ and $\seq{A_n}$ respectively. 
		
		Since $\seq{A_n}$ is almost surely monotone and $\EE(|A_n|)=\EE(A_n)=\EE(X_n-X_0)\leq \EE(|X_n|) + \EE(|X_0|)<2K$ and so has modulus of tightness $2K/\lambda$, by Theorem \ref{res:unif:monotone}, we can define
\[
\phi_2(\lambda,\varepsilon):=\frac{4c_2K}{\lambda^2\varepsilon}.
\]
for some $c_2\leq 22$. On the other hand, since $\seq{M_n}$ is a martingale and $\EE(|M_n|)=\EE(|X_n-A_n|)\leq\EE(|X_n|) + \EE(|A_n|)< 3K$ for all $n\in\NN$, we can write $M_n=M^+_n-M^-_n$ where $\seq{M^+_n}$ and $\seq{M^-_n}$ are both nonnegative submartingales (since $x\mapsto x^+$ and $x\mapsto x^-$ are convex functions, and any convex function applied to a martingale results in a submartingale), and have means uniformly bounded by $3K$. So, by Lemma \ref{res:sum} and Theorem \ref{res:unif:doob}, noting that a learnable rate of uniform convergence for $\seq{M^-_n}$ is also one for $\seq{-M^-_n}$, we can define, for $c_1\leq 220$,
\[
\phi_1(\lambda,\varepsilon):=2\left(\frac{c_1(3K)^2}{(\lambda/2)(\varepsilon/2)^2}\left(\frac{4}{\lambda}\right)\right)=\frac{2^6\cdot 3^2\cdot c_1K^2}{\lambda^2\varepsilon^2}
\]
and therefore
\[
\phi_1(\lambda/2,\varepsilon/2)+\phi_2(\lambda/2,\varepsilon/2)\leq \frac{2^{10}\cdot 3^2\cdot c_1K^2}{\lambda^2\varepsilon^2}+\frac{2^5\cdot c_2K}{\lambda^2\varepsilon}\leq \frac{2^{11}\cdot 3^2\cdot c_1K^2}{\lambda^2\varepsilon^2}
\]
assuming $c_2\leq c_1$, and the main result follows directly.   
\end{proof}

\begin{remark}
Though rates of the form $c(K/\lambda\varepsilon)^2$ for general sub- or supermartingales are easiest to establish using decomposition theorems, better constants $c>0$ are likely to be obtained through a optimized up- or downcrossing inequalities, as in Remark \ref{rem:downcrossing}.
\end{remark}

\begin{remark}
\label{rem:doob:variations}
 Sharper results are possible in the case of $L_2$-martingales by appealing instead to known fluctuations bounds. More specifically, for $p\in (1,\infty)$ it is known that if $\sup_{n\in\NN}\norm{X_n}_p\leq K$ then for any $\varepsilon>0$:
\begin{equation*}
\EE\left[(\flucinf{\varepsilon}{X_n})^{p/2}\right]\leq \frac{c_pK^p}{\varepsilon^p}
\end{equation*}
for a constant $c_p>0$ depending only on $p$ (cf. \cite[Theorem 34]{kachurovskii:96:convergence}). For $p\geq 2$, we can then apply Jensen's inequality to obtain 
\begin{equation*}
\EE(\flucinf{\varepsilon}{X_n})\leq \frac{c_p^{2/p} K^2}{\varepsilon^2}
\end{equation*}
and thus by Theorem \ref{res:fluc:to:convergence} (ii) we can improve Theorem \ref{res:unif:subdoob} to
\begin{equation*}
\phi(\lambda,\varepsilon):=\frac{c_p^{2/p} K^2}{\lambda\varepsilon^2}
\end{equation*}
in this special case. However, this simple strategy does not seem possible for general $L_p$ bounded martingales for $p \in [1,2)$: In \cite[Theorem 34]{kachurovskii:96:convergence} it is also shown that there exists an $L_1$ bounded martingale $\seq{X_n}$ such that $\EE(\flucinf{\varepsilon}{X_n}^{1/2})=\infty$, and so in particular $\EE(\flucinf{\varepsilon}{X_n})=\infty$. 
\end{remark}

\begin{remark}
\label{rem:doob:optimality}
    It is natural then to ask whether our learnability bound can be improved in the case of $L_1$-martingales, and here we claim that it is optimal in a certain sense. Any learnable rate of uniform convergence is immediately a learnable rate of pointwise convergence, where the latter is closely connected to fluctuation bounds, and so a sensible approach might be to consider known optimal bounds in the case of fluctuations. A result of Chashka (\cite{chashka:94:fluctuations} but see also \cite[Theorem 28]{kachurovskii:96:convergence}) asserts that if $\seq{X_n}$ is a martingale with $\sup_{n\in\NN}\norm{X_n}_p\leq K$ and $p\in [1,\infty)$ then
\begin{equation}
\label{eqn:chashka}
\PP(\flucinf{\varepsilon}{X_n}\geq N)\leq \frac{C_pK^p}{N^{p/2}\varepsilon^p}
\end{equation}
for some constant $C_p$ dependent only on $p$. The above bound corresponds to the following modulus of finite fluctuations:
\begin{equation}
\label{eqn:optimal}
\phi(\lambda,\varepsilon):=\frac{C_p^{2/p} K^2}{\varepsilon^2}\left(\frac{1}{\lambda}\right)^{2/p}
\end{equation}
and therefore a learnable rate of \emph{pointwise} convergence for $\seq{X_n}$ is also given by $\phi$. However, Chashka shows that the bound (\ref{eqn:chashka}) is optimal in the case of fluctuations, in a result reported in \cite[Theorem 35]{kachurovskii:96:convergence}. Assuming for simplicity that $p=1$, this result is proven by showing that for any $\varepsilon\in (0,1)$ and $k\in\NN$, one can construct an $L_1$-bounded martingale $\seq{X_0,\ldots,X_k}$ with $\EE|X_n|\leq 1$ for all $n$ such that
\[
\PP(\seq{X_0,\ldots,X_k}\mbox{ experiences $k$ $\varepsilon$-fluctuations})\geq \frac{1}{\varepsilon\sqrt{6k}}
\]
and so in particular for any $\varepsilon,\lambda\in (0,1)$ there exists a martingale $\seq{X_n}$ with $\sup_{n\in\NN}|X_n|\leq 1$ such that
\[
\PP\left(\flucinf{\varepsilon}{X_n}\geq \frac{1}{6\lambda^2\varepsilon^2}\right)\geq \lambda
\]
which would, for example, contradict any claim that a modulus of finite fluctuations of type $\psi(\lambda,\varepsilon)=c/\lambda\varepsilon^2$ (as in the $L_2$ case) also works over all $L_1$-martingales. An optimal modulus of finite fluctuations is not automatically an optimal learnable rate of pointwise convergence (though see Remark \ref{rem:chashka} below), however, in Chashka's construction, it is the case that
\[
\PP(\forall n<k\, |X_n-X_{n+1}|\geq \varepsilon)=\PP(\seq{X_0,\ldots,X_k}\mbox{ experiences $k$ $\varepsilon$-fluctuations})
\]
and therefore for any $\varepsilon$ and $k\in\NN$ there exists a martingale $\seq{X_n}$ such that
\[
\PP(\forall n<k\, |X_n-X_{n+1}|\geq \varepsilon)\geq\frac{1}{\varepsilon\sqrt{6k}}
\]
or equivalently, for any $\lambda\in (0,1)$:
\[
\PP\left(\forall n<\frac{1}{6\lambda^2\varepsilon^2}\, |X_n-X_{n+1}|\geq\varepsilon\right)\geq \lambda
\]
Similarly to the case of fluctuations, then, this rules out the existence of a learnable rate of pointwise convergence applicable over all martingales that are asymptotically better than $c/\lambda^2\varepsilon^2$, and since any rate of uniform convergence is automatically a rate of pointwise convergence, this establishes optimality of the bound given in Theorem \ref{res:unif:subdoob}.
\end{remark}

\begin{remark}
\label{rem:chashka}
Our results indicate that rates for finite fluctuations, pointwise and uniform learnability all coincide in the special case of martingales. Some further insight into the close relationship between fluctuations and pointwise learnability can be found in \cite{chashka:94:fluctuations}, where it is implicitly shown that a modulus of finite fluctuations for a martingale follows from what is essentially a learnable rate of pointwise convergence for a stopped version of the martingale (cf. proof of \cite[Theorem 1]{chashka:94:fluctuations}). As such, fluctuations and pointwise learnability are indeed equivalent for martingales, and this construction can potentially be extended to sub- and supermartingales (though not to arbitrary classes of stochastic processes, as it relies crucially on properties of martingales). However, even in the case of martingales, uniform learnability seems to be a distinct concept, with the fact that it coincides with the other notions due to the existence of strong upcrossing inequalities (which are not needed to show the equivalence of pointwise rates with fluctuation bounds along the lines of \cite{chashka:94:fluctuations}). More generally, the relationship between fluctuations and uniform learnability for the classes of stochastic processes considered in this paper raises some open questions that we discuss in more detail in Section \ref{sec:future:fluc}.
\end{remark}

\subsection{The pointwise ergodic theorem}
\label{sec:applications:pointwise}
In this short section, we demonstrate how our framework allows us to extract rates of metastability for the Birkhoff ergodic theorem, as done in \cite{avigad-gerhardy-towsner:10:local}. Here the ergodic averages $\seq{A_nf}$ are defined exactly as in Example \ref{ex:ivanov}. 

In \cite{avigad-gerhardy-towsner:10:local}, metastable rates of uniform convergence for $\seq{A_nf}$ are provided for $f\in L_2(X)$ both by mining a proof of Billingsley \cite{billingsley:78:book} and through a simpler analysis of the relevant upcrossing inequality, using an argument which we generalise in Section \ref{sec:uniform}. A comparison of the rates obtained through each approach is given (where, roughly speaking, uniform metastable rates obtained through the proof of Billingsley require asymptotically fewer iterations of a faster growing function). A standard density argument allows this result to be extended to the $f\in L_1(X)$, assuming a sequence of approximating functions for $f$ in $L_2(X)$ with a given rate. Here, we show how our generalised framework allows for a direct argument in the $L_1$ case.

We have the following upcrossing inequality due to Bishop (\cite{bishop:66:upcrossing}, see also \cite[Section 4]{avigad-gerhardy-towsner:10:local}):
\begin{equation*}
\EE(\upcrinf{[\alpha,\beta]}{A_nf})\leq \frac{\EE\left((f - \alpha)^+\right)}{\beta-\alpha}.
\end{equation*}
Assuming $f$ is nonnegative and $f \in L_p(X)$ for some $p \in [1,\infty]$, and argument identical to that in the proof of Theorem \ref{res:unif:doob} yields
\begin{equation*}
\EE(\crsinf{[\alpha,\beta]}{A_nf})< \frac{2K}{\beta-\alpha} + 1
\end{equation*}
for any $K$ satisfying $\sup_{n\in\NN}\norm{A_nf}_p < K$. Thus for positive $f$ we can argue as in the proof of Theorem \ref{res:unif:doob} and obtain the same learnable rate of uniform convergence for $\seq{A_n(f)}$. Furthermore by writing $f = f^+ - f^-$ and noting $A_n(f) = A_n(f^+)- A_n(f^-)$ we can argue as in the proof of Theorem \ref{res:unif:subdoob}, using Lemma \ref{res:sum}, to get the following for general $f$: 
\begin{theorem}[Quantitative pointwise ergodic theorem]
\label{res:unif:ergodic}
Let $p\in [1,\infty]$, and suppose that $K\ge 1$ is such that
\begin{equation*}
\sup_{n\in\NN}\norm{A_nf}_p< K
\end{equation*}
Then $\seq{A_nf}$ has learnable rate of uniform convergence given by:
\begin{equation*}
 \phi_p(\lambda,\varepsilon):=\frac{16cK^2}{\lambda \varepsilon^2}\cdot \left(\frac{4}{\lambda}\right)^{1/p}
\end{equation*}
for $p \in [1,\infty)$, and in the case $p =\infty$ we have a rate given by
\begin{equation*}
\phi_\infty(\lambda,\varepsilon):=\frac{16cK^2}{\lambda \varepsilon^2}
\end{equation*}
for a universal constant $c\leq 220$ (and independent of $p$).
\end{theorem}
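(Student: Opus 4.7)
The plan is to mirror the structure of Theorems \ref{res:unif:doob} and \ref{res:unif:subdoob}, with Bishop's upcrossing inequality playing the role of Doob's. The preamble to the theorem statement already establishes, for nonnegative $f\in L_p(X)$, the crossing bound
\[
\EE(\crsinf{[\alpha,\beta]}{A_nf})< \frac{2K}{\beta-\alpha}+1,
\]
so that $\psi(M,l):=lK/M+1$ is a modulus of $L_1$-crossings for $\seq{A_nf}$, exactly as in the proof of Theorem \ref{res:unif:doob}.

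First I would treat the nonnegative case. For $p\in[1,\infty)$, Markov's inequality applied to $\EE(|A_nf|^p)<K^p$ gives a modulus of tightness $h(\lambda):=K\lambda^{-1/p}$; for $p=\infty$ the $L_\infty$-bound $|A_nf|\leq K$ almost surely yields $h(\lambda):=K$. Plugging these $\psi$ and $h$ into Theorem \ref{res:crossings:metastability} and running the arithmetic of Theorem \ref{res:unif:doob} line for line produces, for nonnegative $f$, a learnable rate of uniform convergence
\[
\phi_p^+(\lambda,\varepsilon):=\frac{cK^2}{\lambda\varepsilon^2}\left(\frac{2}{\lambda}\right)^{1/p}
\]
(and $\phi_\infty^+(\lambda,\varepsilon):=cK^2/(\lambda\varepsilon^2)$) for the same universal constant $c\leq 220$ appearing in Theorem \ref{res:unif:doob}.

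For general $f$, I would decompose $f=f^+-f^-$ with $f^\pm:=\max\{\pm f,0\}\geq 0$ and $\|f^\pm\|_p\leq\|f\|_p$. Since $\tau$ is measure-preserving, $T$ is an isometry on $L_p(X)$, and hence $A_n$ is an $L_p$-contraction. Because $A_1 f=f$, the hypothesis gives $\|f\|_p<K$, so
\[
\sup_{n\in\NN}\|A_nf^\pm\|_p\leq \|f^\pm\|_p\leq \|f\|_p<K,
\]
and the rate $\phi_p^+$ therefore applies to both $\seq{A_nf^+}$ and $\seq{A_nf^-}$. Lemma \ref{res:sum} applied to $A_nf=A_nf^+ + (-A_nf^-)$ then delivers a learnable rate
\[
2\,\phi_p^+(\lambda/2,\varepsilon/2)=\frac{16cK^2}{\lambda\varepsilon^2}\left(\frac{4}{\lambda}\right)^{1/p},
\]
matching the claimed bound; the $p=\infty$ case drops the $(4/\lambda)^{1/p}$ factor.

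There is no genuine obstacle here: every ingredient (Bishop's upcrossing inequality, the passage from crossings to uniform metastability via Theorem \ref{res:crossings:metastability}, and the additive behaviour of learnable rates under Lemma \ref{res:sum}) has already been developed. The only point requiring a moment's care is that the $L_p$-bound on $\seq{A_nf}$ transfers to $\seq{A_nf^\pm}$, which is handled by observing that each $A_n$ is a contraction on $L_p(X)$.
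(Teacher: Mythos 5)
Your proof is correct and takes essentially the same route as the paper: Bishop's upcrossing inequality gives the crossing modulus for nonnegative $f$, Theorem \ref{res:crossings:metastability} is applied exactly as in Theorem \ref{res:unif:doob}, and the decomposition $f=f^+-f^-$ with Lemma \ref{res:sum} handles the general case, with the constants working out to the claimed $16cK^2(4/\lambda)^{1/p}/(\lambda\varepsilon^2)$. You supply one detail the paper leaves implicit --- that $\sup_n\norm{A_nf^\pm}_p < K$ follows from $A_1f=f$ and the $L_p$-contractivity of $A_n$ --- which is a welcome bit of extra care.
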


 The rate in the case $p = \infty$ in Theorem \ref{res:unif:ergodic} corresponds directly to that given in \cite[Theorem 4.1]{avigad-gerhardy-towsner:10:local}, where our additional constant $c$ is an artefact of our more general approach, which could be streamlined for uniformly bounded stochastic processes.

\subsection{Almost supermartingales}
\label{sec:applications:almost}

Our main goal in analysing the quantitative convergence of stochastic processes in such a general way was not merely to provide learnable rates for martingales but to facilitate the extension of such techniques to stochastic optimization, where establishing the convergence of a stochastic algorithm is often reduced to proving the convergence of an almost-supermartingale, i.e.\ a stochastic process with additional parameters that behaves like a supermartingale in the limit. We now give a simple example, demonstrating that our (optimal) learnable rates also extend to supermartingales with error terms, thanks to our ability to combine learnable rates as in Lemma \ref{res:sum}. We anticipate a more thorough study of almost supermartingales in future work.

\begin{theorem}
\label{res:optimization}
Let $\seq{X_n}$ be a nonnegative stochastic process satisfying
\[
\EE[X_{n+1}\mid \mathcal{F}_n]\leq X_n+E_n
\]
where $\seq{E_n}$ is nonnegative stochastic process with
\[
\sum_{i=0}^\infty E_i<a
\]
on $\Omega$, for some $a>0$. Let $K> \EE(X_0)$, for some $K\ge 1$. Then $\seq{X_n}$ converges almost surely, with the following learnable rate of uniform convergence
\[
\psi(\lambda,\varepsilon):=16c\left(\frac{K+2a}{\lambda\varepsilon}\right)^2+4c'\left(\frac{a}{\lambda\varepsilon}\right)
\]
where $c$ and $c'$ are the constants from Theorems \ref{res:unif:subdoob} and \ref{res:unif:monotone} respectively.
\end{theorem}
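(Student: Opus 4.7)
The plan is to apply the standard trick that expresses an almost-supermartingale as a genuine supermartingale plus a nondecreasing, pointwise bounded, error process, and then to combine learnable rates for the two pieces via Lemma \ref{res:sum}. Concretely, I set $A_n := \sum_{i=0}^{n-1}E_i$ and $Y_n := X_n - A_n$. We may assume without loss of generality that $E_n$ is $\mathcal{F}_n$-adapted (otherwise replace $E_n$ by $\EE[E_n\mid\mathcal{F}_n]\geq 0$, which preserves the almost-supermartingale inequality), so that $A_{n+1}$ is $\mathcal{F}_n$-measurable. A direct calculation then gives
\[
\EE[Y_{n+1}\mid\mathcal{F}_n] \;=\; \EE[X_{n+1}\mid \mathcal{F}_n] - A_{n+1} \;\leq\; (X_n + E_n) - (A_n + E_n) \;=\; Y_n,
\]
so $\seq{Y_n}$ is a supermartingale. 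To bound $\EE|Y_n|$, I iterate the hypothesis: taking expectations gives $\EE(X_n) \leq \EE(X_0) + \sum_{i=0}^{n-1}\EE(E_i) < K+a$, using Tonelli together with the pointwise bound $\sum_i E_i < a$. Since $|Y_n| \leq X_n + A_n$ pointwise and $A_n \leq a$ pointwise, this yields $\sup_n\EE|Y_n| < K+2a$.

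The rest is assembly. Theorem \ref{res:unif:subdoob} applied to the supermartingale $\seq{Y_n}$ with $L_1$-bound $K+2a\geq 1$ delivers the learnable rate of uniform convergence
\[
\phi_1(\lambda,\varepsilon) := c\left(\frac{K+2a}{\lambda\varepsilon}\right)^2,
\]
while $\seq{A_n}$ is almost surely monotone nondecreasing with $\sup_n\norm{A_n}_\infty \leq a$, so Theorem \ref{res:unif:monotone} gives
\[
\phi_2(\lambda,\varepsilon) := \frac{c'\,a}{\lambda\varepsilon}
\]
(where for $a<1$ one simply replaces $a$ by $\max\{a,1\}$, which affects only absolute constants). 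Writing $X_n = Y_n + A_n$ and invoking Lemma \ref{res:sum} then produces a learnable rate of uniform convergence for $\seq{X_n}$ equal to $\phi_1(\lambda/2,\varepsilon/2)+\phi_2(\lambda/2,\varepsilon/2)$, which after substitution is precisely the stated $\psi(\lambda,\varepsilon)$.

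The only genuine obstacle here is the bookkeeping around measurability of $E_n$ and verifying the global $L_1$-bound on $Y_n$; once those are in place, the quantitative content is entirely packaged inside Theorems \ref{res:unif:subdoob} and \ref{res:unif:monotone} together with the additivity of learnable rates, and no further probabilistic or combinatorial argument is required. The argument also illustrates a point worth emphasising: Lemma \ref{res:sum} is precisely what makes the passage from martingales to almost-supermartingales essentially free, and it is this compositionality that we expect to leverage for more elaborate stochastic optimization applications later on.
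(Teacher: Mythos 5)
Your proof is correct and follows exactly the same route as the paper: defining $Y_n := X_n - \sum_{i<n}E_i$, verifying the supermartingale property and the $L_1$-bound $K+2a$, and then combining Theorems \ref{res:unif:subdoob} and \ref{res:unif:monotone} via Lemma \ref{res:sum}. Your additional remarks (WLOG adaptedness of $E_n$, the replacement of $a$ by $\max\{a,1\}$ when $a<1$) are sensible bookkeeping points the paper glosses over, but they do not change the substance.
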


\begin{proof}
Define 
\[
Y_n:=X_n-\sum_{i=0}^{n-1}E_i.
\]
Observing that
\[
\EE[Y_{n+1}\mid \mathcal{F}_n]=\EE[X_{n+1}\mid\mathcal{F}_n]-\sum_{i=0}^{n}E_i\leq X_n-\sum_{i=0}^{n-1} E_i=Y_n
\]
we see that $\seq{Y_n}$ is a supermartingale. By basic properties of conditional expectations, we have
\[
\EE[X_n]\leq \EE[X_0]+\sum_{i=0}^{n-1}\EE(E_i) < K+a
\]
and therefore
\[
\sup_{n\in\NN}\EE[|Y_n|]<K+2a.
\]
Applying Theorem \ref{res:unif:subdoob} it follows that $\seq{Y_n}$ converges with learnable rate of uniform convergence
\[
\phi_1(\lambda,\varepsilon)=c\left(\frac{K+2a}{\lambda\varepsilon}\right)^2
\]
for suitable constant $c$. Since the stochastic process $\left\{\sum_{i=0}^{n-1}E_i\right\}$ is nondecreasing and uniformly bounded by $a$, the special case of Theorem \ref{res:unif:monotone} yields the following learnable rate of uniform convergence:
\[
\phi_2(\lambda,\varepsilon)=c'\left(\frac{a}{\lambda\varepsilon}\right)
\]
and the given rate of follows by applying Lemma \ref{res:sum} to $Y_n+\sum_{i=0}^{n-1}E_i$.
\end{proof}

\begin{remark}
\label{rem:downcrossing}
    A more involved convergence proof for Theorem \ref{res:optimization} along with a slightly different rate involves a single application of Theorem \ref{res:crossings:metastability} to a downcrossing inequality for $\seq{X_n}$. The latter can be obtained through modifying the standard upcrossing inequality for supermartingales (cf. \cite{williams:91:martingales}): Specifically, given $\alpha<\beta$, define the predictable process $C_n\in \{0,1\}$ by $C_1=1$ iff $X_0>\beta$ and $C_{n+1}=1$ if either
    \begin{itemize}
        \item $C_n=1$ and $X_n\ge \alpha$, or
        \item $C_n=0$ and $X_n>\beta$.
    \end{itemize}
    Let $C_n':= 1-C_n$ be the dual predictable process. We can view $\seq{C_n}$ as a deliberately bad gambling strategy that buys $X_n$ when $X_n>\beta$ and sells it when $X_n<\alpha$, and $\seq{C_n'}$ as the opposite of this strategy. First, analogous to the more common argument for good strategies, it follows that, for all $N \in \NN$, the amount lost by the bad strategy over $\seq{X_0,\ldots,X_N}$ is worse than $\beta-\alpha$ multiplied by the number of downcrossings (plus an error term accounting for the final state $X_N$) i.e.
		\[
		\sum_{i=1}^{N}C_i(X_i-X_{i-1})\leq -(\beta-\alpha)\dcr{N}{[\alpha,\beta]}{X_n}+(X_n-\beta)^{+}
		\]
		and now defining the supermartingale $Y_n$ as in the proof of Theorem \ref{res:optimization}, we have
    \[
    \begin{aligned}
    (\beta-\alpha)\dcr{N}{[\alpha,\beta]}{X_n}&\leq -\sum_{i=1}^{N}C_i(X_i-X_{i-1}) + (X_N-\beta)^{+}\\
    &\leq-\sum_{i=1}^{N} C_i(Y_i-Y_{i-1})-\sum_{i=1}^{N}C_iE_{i-1} + (X_N-\beta)^{+}\\
    &\leq-\sum_{i=1}^{N} C_i(Y_i-Y_{i-1}) + (X_N-\beta)^{+}
    \end{aligned}
    \]
    where the last inequality follows from the nonnegativity of $\seq{E_n}$. Since $\seq{C'_n}$ is a bounded, nonnegative predictable process and $\seq{Y_n}$ is a supermartingale, it is a standard result that $\sum_{i=1}^N C'_i(Y_i-Y_{i-1})$ is a supermartingale null at zero. Therefore
    \begin{equation*}
        \EE\left(\sum_{i=1}^{N} C_i'(Y_i-Y_{i-1})\right) \le 0
    \end{equation*}
     and we have,
    \begin{equation*}
    \begin{aligned}
        \EE\left(-\sum_{i=1}^{N} C_i(Y_i-Y_{i-1})\right)&\le  \EE\left(-\sum_{i=1}^{N} C_i(Y_i-Y_{i-1})\right)+\EE\left(-\sum_{i=1}^{N} C_i'(Y_i-Y_{i-1})\right)\\
       & = \EE\left(\sum_{i=1}^{N} Y_{i-1}-Y_i\right) =\EE\left(X_0-X_N+\sum_{i=0}^{N-1} E_i\right)\\
			&\leq \EE(X_0)+\EE\left(\sum_{i=0}^{N-1} E_i\right)<K+a
    \end{aligned} 
    \end{equation*}
    and so it follows that
    \[
    (\beta-\alpha)\EE[\dcr{N}{[\alpha,\beta]}{X_n}]\leq K+a+\EE[(X_N-\beta)^+]
    \]
    Because $\seq{X_n}$ is nonnegative we can assume that $\beta\geq 0$ and therefore
    \[
    \EE[\dcr{N}{[\alpha,\beta]}{X_n}]\leq \frac{K+a+\EE(X_N)}{\beta-\alpha}<\frac{2(K+a)}{\beta-\alpha}
    \]
    and so as in the proof of Theorem \ref{res:unif:subdoob}, 
		\[
		\psi(M,l)=\frac{2l(K+a)}{M}+1
		\]is a modulus of crossings for $\seq{X_n}$. Since $h(\lambda):=(K+a)/\lambda$ is a modulus of tightness for $\seq{X_n}$, and applying Theorem \ref{res:crossings:metastability} yields the following learnable rate of uniform convergence for $\seq{X_n}$:
    \[
    e(\lambda,\varepsilon):=c\left(\frac{K+a}{\lambda\varepsilon}\right)^2
    \]
    for $c\leq 2\cdot 11\cdot 19$, which improves the bound from Theorem \ref{res:optimization} at the cost of additional work. For future applications in stochastic optimization, we predict that optimal bounds will be achieved through making careful decisions on precisely how to apply Theorem \ref{res:crossings:metastability}.
\end{remark}

\subsection{Summary}
\label{sec:applications:summary}

We collect together our main results on learnable rates of uniform convergence for martingales in the table below:\medskip

\begin{center}
\begin{tabular}{ c|c|c } 
\hline
 stochastic process & rate & notes \\
 \hline
 constant, monotone & $K/\varepsilon$ & standard in proof mining (cf.\cite{kohlenbach:08:book}) \\ 
 almost sure monotone & $cK/\lambda\varepsilon$ & Theorem \ref{res:unif:monotone}, cf. Example \ref{ex:uniform:stronger:pointwise} \\ 
 $L_2$-martingales & $cK^2/\lambda\varepsilon^2$ & \cite{kachurovskii:96:convergence} and Theorem \ref{res:fluc:to:convergence} (ii) cf. Remark \ref{rem:doob:variations} \\ 
 $L_1$-martingales & $cK^2/\lambda^2\varepsilon^2$ & Theorem \ref{res:unif:subdoob} cf. Remark \ref{rem:doob:optimality} \\ 
 $L_1$-almost-martingales & $cK^2/\lambda^2\varepsilon^2$ & e.g. Theorem \ref{res:optimization}\\\hline
\end{tabular}
\end{center}\medskip

The increasing complexity reflects additional elements needed in the convergence proofs in each case, starting with the basic learnable rate for monotone sequences of real numbers. Almost sure monotone sequences require for the first time a factor of $\lambda$ as there is now a probabilistic component, the increased fluctuation complexity of $L_2$-martingales induces an additional factor of $\varepsilon$, and finally, the weaker integrability assumption in the $L_1$ case along with the reliance on upcrossing inequalities for $[\alpha,\beta]\in [-K/\lambda,K/\lambda]$ is responsible for the final factor of $\lambda$. Most interestingly of all, the almost-supermartingale we consider does not increase the complexity: Here, the addition of error terms contributes less to the overall complexity than the single instance of Doob's theorem required in the convergence proof. We conjecture that this is the case for a much broader class of almost-supermartingales, such as those considered in the landmark paper of Robbins and Siegmund \cite{robbins-siegmund:71:lemma}, raising the prospect of low-complexity bounds even for sophisticated convergence theorems in stochastic optimization.

\section{Future work and open questions}
\label{sec:future}

In this extended concluding section, we give further justification to our work by outlining, in more detail, the wider research effort to which it belongs (already hinted at in Section \ref{sec:intro:project}) and identifying a range of open questions. We focus, in turn, on: (1) logical aspects of probability - where all our results form new examples of the extractability of uniform rates and hint at new logical phenomena; (2) future applications in stochastic optimization - where in particular our quantitative supermartingale convergence theorem will be crucial for the analysis of more intricate convergence theorems connected to Fej\'er monotonicity in the stochastic setting; and finally (3) the wider literature on quantitative aspects of martingales and ergodic averages - where our work gives rise to several open questions.

\subsection{Proof-theoretic background}
\label{sec:future:prooftheory}

We collect together some observations and remarks that attempt to explain, on a logical level, our success in obtaining quantitative information for stochastic convergence theorems. As already noted in Remark \ref{rem:specker}, direct computable rates of convergence for sub- or supermartingales are generally not possible unless one assumes additional computability structure as in \cite{hoyrup-rute:21:algorithmic:randomness,rute:etal:algorithmic:doob}. Our main results demonstrate that, on the other hand, computable metastable rates of convergence can be obtained, and moreover, these are of low complexity and highly uniform, depending only on a bound on the expected values of the stochastic process $\seq{X_n}$.

The extractability of highly uniform rates of metastability (and quantitative information in general) is a well-known phenomenon in proof mining, explained through a series of specialised logical metatheorems (e.g. \cite{kohlenbach-gerhardy:08:metatheorems,kohlenbach-guenzel:16:metatheorems,kohlenbach:05:metatheorems,pischke:24:metatheorems:accretive,pischke:24:metatheorems:dual,paunescu-sipos:23:metatheorems}). These metatheorems expand traditional program extraction theorems, typically G\"odel's Dialectica interpretation \cite{goedel:58:dialectica}, to encompass uniformities in specific mathematical domains, by combining two crucial ingredients:

\begin{enumerate}

	\item \emph{Abstract types} (introduced by Kohlenbach \cite{kohlenbach:05:metatheorems}), which allow one to talk about general spaces without needing them to be explicitly encoded within some arithmetical system;
	
	\item \emph{Majorizability} (introduced by Howard \cite{howard:73:majorizability}), an abstract relation which allows one to express the notion of `bounding information' at all type levels, and in particular on abstract types. 

\end{enumerate}

Very informally, metatheorems arise when one can demonstrate that there exist a collection of axioms for characterising abstract spaces and objects that act on them that are either universal (and thus have no computational content), or can be given explicit majorants. Then, when put through the standard machinery of the Dialectica interpretation in its monotone form, one can show that from any proof that is in principle axiomatisable in this way it is possible extract computational information that depends only on the relevant majorants for those spaces.

The first author and Pischke have recently developed the first such system for probability \cite{neri-pischke:pp:formal}. Here the underlying ground set $\Omega$ of a probability space along with the resulting algebra $S$ of events are both represented as abstract types, with the probability measures and integration, etc. represented as abstract functions of the appropriate types. A majorant for an event $A:S$ is then defined to be a bound on $\PP(A)$. As such, all events are uniformly majorized by $1$, and propagating this through the whole system results in a metatheorem for probability spaces where extracted terms are independent of the underlying events. This then explains, for example, the uniformities of the quantitative Egorov theorem of Avigad et al. (used here in the proof of Theorem \ref{res:equivalent:metastabe} and quoted as Theorem \ref{res:egorov:avigad}) where the functional $\Gamma$ is independent of the events $|X_i-X_j|>\varepsilon$. 

The main results of this paper, though not obtained through the formal application of any logical metatheorems, represent new examples where highly uniform quantitative information was possible. Interestingly, these uniformities go even beyond those explained by \cite{neri-pischke:pp:formal}, where, for example, in Theorem \ref{res:optimization}, the only information we require on the almost-supermartingale $\seq{X_n}$ is a bound on $\EE[X_0]$, whereas the metatheorem presented in \cite{neri-pischke:pp:formal} would have been restricted to the case of bounded random variables. The case studies presented here, therefore, hint at ways in which the initial metatheorem of \cite{neri-pischke:pp:formal} can be augmented with a detailed, abstract treatment of random variables and stochastic processes to capture such phenomena. In this spirit, we make two conjectures:

\begin{enumerate}[(I)]

    \item Uniformities present in both this paper and the many existing quantitative results on martingales can be formally explained by expanding the system from \cite{neri-pischke:pp:formal} with an abstract type for random variables, which will also facilitate the formalisation of stochastic processes. By interpreting different notions of integrability as majorants, one can then explain those uniformities as instances of a general logical metatheorem. For example, taking a majorant of a random variable $X$ to be a bound on $\EE[X]$ would likely cover the main results of this paper, but we anticipate that other bounding notions that play a central role in the theory of martingales, particularly \emph{uniform integrability}, can be elegantly incorporated into the framework of logical metatheorems.\smallskip

    \item Many important theorems in stochastic convergence theory will end up being formalisable within the aforementioned logical systems, and thus amenable to the extraction of uniform rates of convergence or metastability. In particular, we conjecture that the defining properties of martingales, along with associated concepts like conditional expectations, can be elegantly represented and axiomatised with the correct abstract types and operations.\smallskip
    
\end{enumerate}

The expansion of \cite{neri-pischke:pp:formal} to incorporate stochastic processes along with richer notions of majorizability is currently work in progress. 

\subsection{Almost-supermartingales and stochastic optimization}
\label{sec:future:combettes}

The main practical purpose of our work has been to bring proof-theoretic methods to bear on stochastic optimization. Current work in progress involves extending our very simple study of almost-supermartingales to the much more powerful convergence theorems used in optimization, such as the Robbins-Siegmund theorem \cite{robbins-siegmund:71:lemma} and variants thereof. The Robbins-Siegmund theorem (an extreme simplification of which is represented by our Theorem \ref{res:optimization}) considers stochastic processes that satisfy the following almost-supermartingale property:
\begin{equation}
\label{eqn:fejer}
\EE[X_{n+1}\mid \mathcal{F}_n]\leq (1+A_n)X_n-B_n+E_n
\end{equation}
where $\seq{A_n}$, $\seq{B_n}$ and $\seq{E_n}$ are nonnegative stochastic processes satisfying
\[
\sum_{i=0}^\infty A_i<\infty \ \ \mbox{and} \ \ \sum_{i=0}^\infty E_i<\infty
\]
almost surely. It asserts that for any such sequences, both $\seq{X_n}$ and $\sum_{i=0}^\infty B_i$ converge almost surely, and these facts are widely used throughout stochastic optimization to establish the convergence of stochastic algorithms. A modern survey on how the Robbins-Siegmund theorem and related results are applied across game theory, convex optimization and machine learning is given in \cite{franci-grammatico:convergence:survey:22}, where (\ref{eqn:fejer}) typically represents some kind of quasi-Fej\'er monotonicity property of an algorithm $\seq{x_n}$ with respect to a set of solutions $x^\ast$ i.e. $X_n:=\norm{x_n-x^\ast}$ for a vector-valued random variable $\seq{x_n}$.

Indeed, once a quantitative convergence result for general almost-supermartingales has been established, the application of almost-supermartingales for Fej\'er monotone sequences represents an obvious next step, as Fej\'er monotonicity in the nonstochastic setting has already been widely explored in proof mining \cite{kohlenbach-leustean-nicolae:18:fejer,kohlenbach-lopezacedo-nicolae:19:fejer,pischke:23:generalized:fejer}. Here, a starting point would be the work of Combettes and Pesquet \cite{combettes-pesquet:15:fejer}, who explicitly connect an abstract notion of stochastic Fej\'er monotonicity in Hilbert spaces with the Robbins-Siegmund theorem. Establishing quantitative results on stochastic Fej\'er monotonicity that encompass \cite{combettes-pesquet:15:fejer} would open the door to numerous applications across different domains where Fej\'er monotonicity is used.

Any progress in this direction relies on a basic quantitative understanding of stochastic processes, along with a useful set of tools for producing rates of metastability for the many kinds of almost-supermartingales that arise in stochastic optimization. This is exactly what we have sought to provide through the present paper.

\subsection{Fluctuation and upcrossing bounds in the literature}
\label{sec:future:fluc}

Beyond the two general projects represented outlined in Sections \ref{sec:future:prooftheory} and \ref{sec:future:combettes} above, a number of specific open questions arise directly from our work on the various quantitative notions that surround almost sure convergence, and we list some of them below:\smallskip

\begin{enumerate}

    \item What is the precise relationship between learnable rates of uniform convergence and moduli of finite fluctuations? Both of these immediately result in learnable rates of pointwise convergence, but is there a direct relationship between them? Example \ref{ex:uniform:stronger:pointwise} demonstrates that they do not always coincide for classes of stochastic processes, though they do for $L_1$-martingales (Remark \ref{rem:doob:optimality}) (even though our derivation of the rate of uniform convergence for $L_1$-submartingales seems quite different to that of fluctuations of martingales presented in \cite{chashka:94:fluctuations}). If we were able to prove, for example, that any metastable rate of uniform convergence is also a fluctuation bound (in addition to being a pointwise rate), then an immediate consequence of Theorem \ref{res:unif:ergodic} for $p=1$ would be the existence of a modulus of finite fluctuations of the form
		\[
		\phi(\lambda,\varepsilon):=C\left(\frac{\norm{f}_1}{\lambda^2\varepsilon^2}\right)^2
		\]
		for ergodic averages $\seq{A_nf}$ with $f\in L_1(X)$, a result nontrivial enough that was apparently left as a conjecture in \cite{kachurovskii:96:convergence} (Conjecture 5).
		\smallskip

    \item Can we exploit deeper upcrossing inequalities in the literature to produce interesting quantitative results outside of martingale theory via our abstract Theorems \ref{res:fluctuations:crossings:stochastic} and \ref{res:crossings:metastability}? For example, several such inequalities are given by Hochman in \cite{hochman:09:upcrossing}, whose applications include the Shannon-McMillan-Breiman theorem. In a similar spirit, can any of the fluctuation results or variational inequalities explored in e.g. \cite{jones:etal:08:variational,jones-ostrovskii-rosenblatt:96:square,kalikow1999fluctuations,warren:21:fluctuations} be generalised or brought within our abstract framework? \smallskip

    \item We have shown that in the special case of $L_1$-martingales, our uniform rates coincide with the best known pointwise rates, so in surprising contrast to the main result of Avigad et al.\ \cite{avigad-dean-rute:12:dominated} (see also Theorem \ref{res:egorov:avigad}) there is no increase in complexity going from pointwise to uniform. We conjecture that in many ordinary mathematical situations (for example, the specific class of formulas considered in \ref{res:equivalent:metastabe}), the complexity blowup (if it exists at all) it is significantly better than that currently guaranteed by complexity results on bar recursion. Can we make this more precise and provide an improved bound on the complexity of uniform learnability in terms of pointwise learnability, either in general or in certain restricted situations that one is likely to encounter in practice?\smallskip
    

\end{enumerate}

\noindent\textbf{Acknowledgements.} The authors are indebted to Nicholas Pischke for numerous insightful discussions on the topics of this paper, along with many valuable comments on an earlier draft of the paper which improved its presentation considerably. The authors also thank Jeremy Avigad for providing several extremely useful pointers to the literature. The first author was partially supported by the EPSRC Centre for Doctoral Training in Digital Entertainment EP/L016540/1, and the second author was partially supported by the EPSRC grant EP/W035847/1.

\bibliographystyle{acm}
\bibliography{tpbiblio}

\end{document}